%------------------------------------------------------------------------------
% Beginning of journal.tex
%------------------------------------------------------------------------------
%
% AMS-LaTeX version 2 sample file for journals, based on amsart.cls.
%
%        ***     DO NOT USE THIS FILE AS A STARTER.      ***
%        ***  USE THE JOURNAL-SPECIFIC *.TEMPLATE FILE.  ***
%
% Replace amsart by the documentclass for the target journal, e.g., tran-l.
%
\documentclass[preprint,12pt]{elsarticle}

%     If your article includes graphics, uncomment this command.
%\usepackage{graphicx}
%% The amssymb package provides various useful mathematical symbols
\usepackage{amssymb}
%% The amsthm package provides extended theorem environments
%% \usepackage{amsthm}

%% The lineno packages adds line numbers. Start line numbering with
%% \begin{linenumbers}, end it with \end{linenumbers}. Or switch it on
%% for the whole article with \linenumbers.
%% \usepackage{lineno}

\usepackage{textcomp}
\usepackage{amsmath}
\usepackage{hyperref}
\usepackage[textheight=20cm, textwidth=14cm]{geometry}

\usepackage{chngcntr}

\newtheorem{theorem}{Theorem}

\newtheorem{lemma}{Lemma}
\newtheorem{corollary}{Corollary}

\newtheorem{problem}{Problem}

\newproof{proof}{Proof}

\newdefinition{remark}{Remark}

\numberwithin{equation}{section}

%    Absolute value notation

%    Blank box placeholder for figures (to avoid requiring any
%    particular graphics capabilities for printing this document).

\newcommand\restr[2]{{% we make the whole thing an ordinary symbol
		\left.\kern-\nulldelimiterspace % automatically resize the bar with \right
		#1 % the function
		\vphantom{\big|} % pretend it's a little taller at normal size
		\right|_{#2} % this is the delimiter
}}

\journal{Journal of Mathematical Analysis and Applications}

\begin{document}
	
\begin{frontmatter}

\title{Frequency theorem for parabolic equations and its relation to inertial manifolds theory}

%    Information for first author
\author{Mikhail Anikushin}
%    Address of record for the research reported here
\address{Department of
	Applied Cybernetics, Faculty of Mathematics and Mechanics,
	Saint Petersburg State University, 28 Universitetskiy prospekt, Peterhof, St. Petersburg 198504, Russia}
\address{Euler International Mathematical Institute, St. Petersburg Department of Steklov Mathematical Institute of Russian Academy of Sciences, 27 Fontanka, St. Petersburg 191011, Russia}
%    Current address
%\curraddr{Department of Mathematics and Statistics,
% Case Western Reserve University, Cleveland, Ohio 43403}
\ead{demolishka@gmail.com}
\date{\today}
%    \thanks will become a 1st page footnote.
%\thanks{}

%    Information for second author

%    General info
%\subjclass[2010]{35K58,35B42,37L45,37L15}

\begin{abstract}
We obtain a version of the Frequency Theorem (a theorem on solvability of certain operator inequalities), which allows to construct quadratic Lyapunov functionals for semilinear parabolic equations. We show that the well-known Spectral Gap Condition, which was used in the theory of inertial manifolds by C.~Foias, R.~Temam and G.~R.~Sell, is a particular case of some frequency inequality, which arises within the Frequency Theorem. In particular, this allows to construct inertial manifolds for semilinear parabolic equations (including also some non-autonomous problems) in the context of a more general geometric theory developed in our adjacent works. This theory is based on quadratic Lyapunov functionals and generalizes the frequency-domain approach used by R.~A.~Smith. We also discuss the optimality of frequency inequalities and its relationship with known old and recent results in the field.
\end{abstract}

\begin{keyword}
	Frequency theorem \sep Parabolic equations \sep Inertial Manifolds \sep Lyapunov functionals
\end{keyword}

\end{frontmatter}

%\maketitle

%% The correct journal style for \specialsection is all uppercase; a known bug
%% in amsart.cls prevents this, so input must be uppercase until it is fixed.
%\specialsection*{This is a Special Section Head}
% !TeX spellcheck = russian_english
\section{Introduction}
We start from precise statements of some of our main results. In this introduction we assume that all the vector spaces are complex. Let $A$ be the generator of a $C_{0}$-semigroup $G(t)$, where $t \geq 0$, acting in a Hilbert space $\mathbb{H}$. For the theory of $C_{0}$-semigroups we recommend the monographs of S.~G.~Krein \cite{Krein1971} or K-J.~Engel and R.~Nagel \cite{EngelNagel2000}. Let $\mathcal{D}(A) \subset \mathbb{H}$ denote the domain of $A$. Let $\Xi$ be another Hilbert space and $B \in \mathcal{L}(\Xi;\mathbb{H})$ \footnote{Here and further $\mathcal{L}(\mathbb{E};\mathbb{F})$ denotes the space of bounded linear operators from a Banach space $\mathbb{E}$ to a Banach space $\mathbb{F}$. If $\mathbb{E}=\mathbb{F}$, we usually write $\mathcal{L}(\mathbb{E})$.}. We consider the abstract evolution equation in $\mathbb{H}$
\begin{equation}
\label{EQ: ControlSystem}
\dot{v}(t) = Av(t) + B\xi(t)
\end{equation}
which is called a \textit{control system}. It is well-known that for every $T>0$, $v_{0} \in \mathbb{H}$ and $\xi=\xi(\cdot) \in L_{2}(0,T;\Xi)$ there exists a unique mild solution $v(t)=v(t, v_{0},\xi)$, where $v(0)=v_{0}$ and $t \in [0,T]$, to \eqref{EQ: ControlSystem}, which is a continuous $\mathbb{H}$-valued function and given by
\begin{equation}
\label{EQ: MildSolution}
v(t) = G(t)v_{0} + \int_{0}^{t}G(t-s)B\xi(s)ds.
\end{equation}

Let $\mathbb{H}_{\alpha}$ be a Hilbert space, which is continuously and densely embedded in $\mathbb{H}$. We also suppose that under the embedding we have $\mathcal{D}(A) \subset \mathbb{H}_{\alpha}$ and, consequently, the latter embedding is continuous by the closed graph theorem. We denote the norms in $\mathbb{H}$ and $\mathbb{H}_{\alpha}$ by $|\cdot|$ and $|\cdot|_{\alpha}$ respectively and the corresponding inner products by $(\cdot,\cdot)$ and $(\cdot,\cdot)_{\alpha}$. One may think of $\mathbb{H}_{\alpha}$ as a space in the scale of Hilbert spaces given by powers of a self-adjoint positive-definite operator (or a sectorial operator) in $\mathbb{H}$ and $A$ being a perturbation of such an operator (see Section \ref{SEC: IMParabSemilinear}). This justifies the notation for such a space. Let us consider the following condition.

\begin{description}
	\item[\textbf{(RES)}] The operator $A$ does not have spectrum on the imaginary axis and the operators $(A-i\omega I)^{-1}$ are bounded in the norm of $\mathcal{L}(\mathbb{H};\mathbb{H}_{\alpha})$ uniformly for all $\omega \in \mathbb{R}$.
\end{description}

Now let $\mathcal{F}(v,\xi)$, where $v \in \mathbb{H}_{\alpha}$ and $\xi \in \Xi$ be a Hermitian form such as
\begin{equation}
\label{EQ: QuadraticFormComplex}
\mathcal{F}(v,\xi) = (\mathcal{F}_{1}v,v)_{\alpha} + 2\operatorname{Re}(\mathcal{F}_{2}v,\xi)_{\Xi} + (\mathcal{F}_{3}\xi,\xi)_{\Xi},
\end{equation}
where $(\cdot,\cdot)_{\Xi}$ denotes the inner product in $\Xi$ and $\mathcal{F}^{*}_{1}=\mathcal{F}_{1} \in \mathcal{L}(\mathbb{H}_{\alpha})$, $\mathcal{F}_{2} \in \mathcal{L}(\mathbb{E};\Xi)$, $\mathcal{F}_{3}^{*}=\mathcal{F}_{3} \in \mathcal{L}(\Xi)$. In Section \ref{SEC: IMParabSemilinear} such a form will be used to determine a class of nonlinearities.

The following version of the Frequency Theorem is one of the main results of this paper. 

\begin{theorem}
	\label{TH: ParabComplexFreqTheorem}
	Let $A$ be also the generator of a $C_{0}$-semigroup $\mathbb{H}_{\alpha}$ and let \textbf{(RES)} be satisfied. Then the following conditions for the pair $(A,B)$ and the form $\mathcal{F}$ are equivalent:
	
	1. For some $\delta'>0$ we have $\mathcal{F}(-(A - i\omega I)B\xi,\xi) \leq -\delta' |\xi|^{2}_{\Xi}$ for all $\xi \in \Xi$ and $\omega \in \mathbb{R}$.
	
	2. There are $\delta>0$ and $P \in \mathcal{L}(\mathbb{H}_{\alpha})$, which is self-adjoint in $\mathbb{H}_{\alpha}$ and such that for $V(v):=( Pv, v )_{\alpha}$ and any $T \geq 0$ we have
	\begin{equation}
	V(v(T)) - V(v_{0}) + \int_{0}^{T} \mathcal{F}(v(s),\xi(s))ds \leq -\delta \int_{0}^{T}(|v(s)|^{2}_{\alpha}+|\xi(s)|^{2}_{\Xi})ds,
	\end{equation}
	where $v(t)=v(t,v_{0},\xi)$ is the solution to \eqref{EQ: ControlSystem} with arbitrary $v(0)=v_{0} \in \mathbb{H}_{\alpha}$ and $\xi(\cdot) \in L_{2}(0,T;\Xi)$ such that $v(\cdot) \in C([0,T];\mathbb{H}_{\alpha})$.
\end{theorem}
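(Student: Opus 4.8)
The plan is to prove the two implications separately. The direction $2\Rightarrow 1$ (necessity of the frequency inequality) is elementary; the direction $1\Rightarrow 2$ is the heart of the theorem and amounts to an infinite-dimensional Kalman--Yakubovich--Popov type construction.

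\emph{Direction $2\Rightarrow 1$.} Fix $\omega\in\mathbb{R}$ and $\xi_{0}\in\Xi$ and put $v_{0}:=-(A-i\omega I)^{-1}B\xi_{0}$, which lies in $\mathcal{D}(A)\subset\mathbb{H}_{\alpha}$ by \textbf{(RES)}. Since $(A-i\omega I)v_{0}=-B\xi_{0}$, the function $v(t):=e^{i\omega t}v_{0}$ is a classical, hence mild, solution of \eqref{EQ: ControlSystem} with $v(0)=v_{0}$ and control $\xi(t):=e^{i\omega t}\xi_{0}$, and $v(\cdot)\in C([0,T];\mathbb{H}_{\alpha})$ for every $T>0$. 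As $P$ is self-adjoint in $\mathbb{H}_{\alpha}$ and $|e^{i\omega T}|=1$, we have $V(v(T))=V(v_{0})$; moreover, by \eqref{EQ: QuadraticFormComplex} the quantities $\mathcal{F}(v(s),\xi(s))=\mathcal{F}(v_{0},\xi_{0})$ and $|v(s)|_{\alpha}^{2}+|\xi(s)|_{\Xi}^{2}=|v_{0}|_{\alpha}^{2}+|\xi_{0}|_{\Xi}^{2}$ are independent of $s$. Feeding this into the inequality of condition 2 and dividing by $T$ yields $\mathcal{F}(v_{0},\xi_{0})\leq-\delta(|v_{0}|_{\alpha}^{2}+|\xi_{0}|_{\Xi}^{2})\leq-\delta|\xi_{0}|_{\Xi}^{2}$, i.e.\ condition 1 with $\delta'=\delta$.

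\emph{Direction $1\Rightarrow 2$: reduction and the whole-line inequality.} By \textbf{(RES)} the constant $M:=\sup_{\omega\in\mathbb{R}}\|(A-i\omega I)^{-1}B\|_{\mathcal{L}(\Xi;\mathbb{H}_{\alpha})}$ is finite, so picking $\delta\in(0,\delta'/(M^{2}+1)]$ upgrades condition 1 to $\mathcal{F}(-(A-i\omega I)^{-1}B\xi,\xi)+\delta(|(A-i\omega I)^{-1}B\xi|_{\alpha}^{2}+|\xi|_{\Xi}^{2})\leq 0$ for all $\omega\in\mathbb{R}$ and $\xi\in\Xi$. Next, again by \textbf{(RES)} (this is where the structure of the scale $\mathbb{H}_{\alpha}$ enters), the Fourier multiplier $\xi\mapsto v_{\xi}$ with $\widehat{v_{\xi}}(\omega)=(i\omega I-A)^{-1}B\widehat{\xi}(\omega)$ is a bounded operator $L_{2}(\mathbb{R};\Xi)\to L_{2}(\mathbb{R};\mathbb{H}_{\alpha})$ producing, for each $\xi$, a bounded solution of \eqref{EQ: ControlSystem} on the whole line, and $G(t)$ is exponentially dichotomous in $\mathbb{H}_{\alpha}$ (with finite-dimensional unstable part in the parabolic applications). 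Applying Plancherel's theorem termwise to \eqref{EQ: QuadraticFormComplex} and invoking the upgraded inequality gives
\begin{equation*}
\int_{\mathbb{R}}\mathcal{F}(v_{\xi}(t),\xi(t))\,dt\;\leq\;-\delta\int_{\mathbb{R}}\bigl(|v_{\xi}(t)|_{\alpha}^{2}+|\xi(t)|_{\Xi}^{2}\bigr)\,dt,\qquad \xi\in L_{2}(\mathbb{R};\Xi).
\end{equation*}

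\emph{Direction $1\Rightarrow 2$: the Lyapunov functional.} The operator $P$ is produced from the value of a linear--quadratic optimal control problem on the half-line: for $v_{0}\in\mathbb{H}_{\alpha}$ put
\begin{equation*}
V(v_{0}):=-\inf\int_{0}^{\infty}\Bigl[\mathcal{F}(v(t),\xi(t))+\delta\bigl(|v(t)|_{\alpha}^{2}+|\xi(t)|_{\Xi}^{2}\bigr)\Bigr]\,dt,
\end{equation*}
the infimum over $\xi\in L_{2}(0,\infty;\Xi)$ with $v(\cdot,v_{0},\xi)\in L_{2}(0,\infty;\mathbb{H}_{\alpha})$ --- a nonempty class, since $\xi\equiv 0$ works on the stable subspace and the finite-dimensional unstable modes can be steered to zero by a suitable control. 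One then shows that $V$ is finite, that $V(\lambda v_{0})=|\lambda|^{2}V(v_{0})$, and that $V$ obeys the parallelogram law, whence $V(v_{0})=(Pv_{0},v_{0})_{\alpha}$ for a self-adjoint $P\in\mathcal{L}(\mathbb{H}_{\alpha})$; finiteness and a bound $V(v_{0})\leq c|v_{0}|_{\alpha}^{2}$ come by extending an admissible half-line process to a whole-line one and applying the displayed estimate. Finally, the Bellman principle --- concatenating an admissible process on $[0,T]$ with a near-optimal tail --- gives $V(v_{0})\geq V(v(T))-\int_{0}^{T}[\mathcal{F}(v(s),\xi(s))+\delta(|v(s)|_{\alpha}^{2}+|\xi(s)|_{\Xi}^{2})]\,ds$, which is exactly the inequality of condition 2; that it persists for \emph{arbitrary} $\xi\in L_{2}(0,T;\Xi)$ with $v(\cdot)\in C([0,T];\mathbb{H}_{\alpha})$ follows by extending the control by zero past $T$. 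The main obstacle is the claim that $V$ is finite and a \emph{bounded} quadratic form: this requires passing cleanly between the half-line problem that defines $V$ and the whole-line inequality where the frequency condition lives, controlling the indefinite cross term $2\operatorname{Re}(\mathcal{F}_{2}v,\xi)_{\Xi}$ uniformly in $v_{0}$, and ensuring the admissible class is nonempty and rich enough to run the dynamic-programming step --- precisely what the dichotomy and the uniform resolvent bound in \textbf{(RES)} are there to provide; the homogeneity, parallelogram identity, Bellman inequality and zero-extension are then routine linear--quadratic bookkeeping.
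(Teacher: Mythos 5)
Your direction $2\Rightarrow 1$ is correct, and it is in fact a slightly different (and cleaner) route than the paper's: the paper takes constant controls, differentiates the operator inequality at $t=0$ via a lemma on right-differentiability of classical solutions (Lemma \ref{LEM: ConstantDifferentiationProperty}), and only then specializes to $i\omega v_{0}=Av_{0}+B\xi_{0}$; your harmonic solutions $e^{i\omega t}v_{0}$ reach the same conclusion without any differentiation lemma.

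The direction $1\Rightarrow 2$ has a genuine gap at the point where you declare the admissible class of the half-line problem nonempty because ``$\xi\equiv 0$ works on the stable subspace and the finite-dimensional unstable modes can be steered to zero by a suitable control.'' Nothing in the hypotheses supplies this: \textbf{(RES)} does not by itself assert an exponential dichotomy of $G(t)$ in $\mathbb{H}_{\alpha}$, does not make the unstable part finite-dimensional, and --- decisively --- says nothing about stabilizability of the pair $(A,B)$. The theorem admits, e.g., $B=0$ (condition 1 then only constrains $\mathcal{F}_{3}$); if $A$ has spectrum in the open right half-plane, your admissible class is empty for $v_{0}$ in the unstable subspace and $V(v_{0})$ is simply undefined. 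This is exactly the obstruction the paper is engineered to avoid: it first proves the result under an explicit $L_{2}$-controllability hypothesis \textbf{(CONT)} (Theorem \ref{TH: OptimalProcessParab}, where the optimal process is obtained by Lax--Milgram on the closed affine subspaces $\mathfrak{M}_{v_{0}}=\mathfrak{M}_{0}+Dv_{0}$ and $P=T^{*}QT$, rather than by your homogeneity-plus-parallelogram argument), and then in Section \ref{SEC: RelaxingControllability} removes \textbf{(CONT)} by enlarging the control space to $\Xi\times\mathbb{H}$, adding a full auxiliary control $\eta$ penalized by $\gamma|\eta|^{2}_{\mathbb{H}}$: the extended pair $(A,\widetilde{B})$ satisfies \textbf{(CONT)} unconditionally (take $\eta=-\nu v$), the frequency inequality survives the extension for $\gamma$ large, and setting $\eta\equiv 0$ at the end recovers the inequality for $(A,B)$. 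Without this (or an equivalent device) your value function cannot be defined, so the finiteness and boundedness claims that you yourself flag as the main obstacle and defer to ``routine bookkeeping'' cannot even be started. A secondary point: your whole-line Plancherel estimate only yields coercivity of the cost on processes with $v(0)=0$ (zero-extension to $t<0$ is a mild solution on all of $\mathbb{R}$ only in that case), so lower-boundedness of the infimum over $\mathfrak{M}_{v_{0}}$ for $v_{0}\neq 0$ still requires the affine-subspace reduction that the paper carries out.
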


\noindent The condition contained in item 1 of the theorem is called \textit{frequency-domain condition} or \textit{frequency inequality}. In Section \ref{SEC: IMParabSemilinear} we discuss applications of Theorem \ref{TH: ParabComplexFreqTheorem} (or its real version, Theorem \ref{TH: ParabRealFreqTheorem}) and some adjacent to it results to semilinear parabolic equations. In particular, we show that the well-known Spectral Gap Condition used in the theory of inertial manifolds is a special case of some frequency-domain condition.

Speaking in terms of Section \ref{SEC: IMParabSemilinear}, previously existing versions of the Frequency Theorem proved by A.~L.~Liktarnikov and V.~A.Yakubovich allow to study the considered class of equations only for $\alpha=0$ (by results of \cite{Likhtarnikov1977}) and $\alpha=1/2$ (by results of \cite{Likhtarnikov1976}). Moreover, these papers contain some additional assumptions, which can be relaxed. Namely, it was recently discovered by A.~V.~Proskurnikov \cite{Proskurnikov2015} that the controllability (or stabilizability) assumption used in \cite{Likhtarnikov1977,Likhtarnikov1976} can be relaxed if one needs only the operator $P$, but not the correctness of some optimization problem. These ideas were extended for the Frequency Theorem from \cite{Anikushin2020FreqDelay}, which covers delay equations, and Theorem \ref{TH: ParabComplexFreqTheorem} also goes in this direction. This relaxation and new versions of the Frequency Theorem allowed the present author to unify and generalize many results of R.~A.~Smith (for example, \cite{Smith1994PB1,Smith1994PB2}) on the Poincar\'{e}-Bendixson theory \cite{Anik2020PB}, convergence theorems \cite{Anikushin2021DiffJ, Anikushin2020Geom} and inertial manifolds \cite{Anikushin2020Red,Anikushin2020Semigroups,Anikushin2020Geom}. Now it became clear that R.~A.~Smith was dealing with a potentially more general (than the one developed by C.~Foias, G.~R.~Sell and R.~Temam \cite{FoiasSellTemam1988}) theory of inertial manifolds. The present paper along with other works of the author reveal this (see Section \ref{SEC: IMParabSemilinear}). 

Armed with proper quadratic Lyapunov functionals, we can establish well-known (see \cite{Zelik2014,KostiankoZelikSA2020,Miclavcic1991,ChowLuSell1992,SellYou1992}) properties of inertial manifolds such as Lipschitzity, exponential tracking, $C^{1}$-differentiability and normal hyperbolicity from integral geometric arguments, i.~e. the arguments that do not use infinitesimal generators (=differential equations). This makes the abstract theory applicable also to (parabolic) delay equations \cite{Anikushin2020FreqDelay} and parabolic equations with nonlinear boundary conditions \cite{Likhtarnikov1976}. Some of these results are contained in \cite{Anikushin2020Red,Anikushin2020Semigroups} and a complete theory is presented in \cite{Anikushin2020Geom}. However, without integral operators and fixed points, there is a price to pay, which in the case of \cite{Anikushin2020Geom} is the compactness assumption. Note that Theorem \ref{TH: ParabComplexFreqTheorem}, when $\mathbb{H}_{\alpha} = \mathbb{H}$, can be applied to study hyperbolic equations also. But we cannot construct inertial manifolds by means of our theory from \cite{Anikushin2020Geom} due to the compactness assumption. Thus, it is interesting how to relax this condition. For example, in the work of N.~Koksch and S.~Siegmund \cite{KokschSiegmund2002}, instead of compactness, it is used some conditions imposed on projectors.

In the recent work of A. Kostianko et al. \cite{KostiankoZelikSA2020} (see also the survey of S.~Zelik \cite{Zelik2014}) it is shown that the Spatial Averaging Principle suggested by J.~Mallet-Paret and G.~R.~Sell \cite{MalletParetSell1988IM} (used in order to construct inertial manifolds for certain scalar parabolic equations in 2D and 3D domains, where the Spectral Gap Condition is not satisfied) also leads to the existence of certain quadratic Lyapunov functionals with similar (in fact, potentially more general) properties as \textbf{(H3)} from our Theorem \ref{TH: ParabEqsMainAssumptionsVer} in Section \ref{SEC: IMParabSemilinear}. It is interesting is there some connection between the Frequency Theorem and the Spatial Avering Principle. It is known that analogs of the frequency inequality for non-stationary optimization problems (where $A$, $B$ and $\mathcal{F}$ may depend on time) can be given in terms of exponential dichotomies for a certain Hamiltonian system associated with the optimization problem (see R.~Fabbri, R.~Johnson and C.~N\'{u}\~{n}ez \cite{Fabbri2003FreqTh} for finite-dimensional problems and uniform exponential dichotomies). From this a conjecture can be stated that the conditions of the Spatial Averaging Principle imply some kind of nonuniform exponential dichotomy for a certain Hamiltonian system associated with some optimization problem posed for the linearized (variational) system. So, we hope that this and our adjacent works will stimulate developments of the Frequency Theorem for infinite-dimensional non-stationary problems based on nonuniform exponential dichotomies.

To provide understanding in the theory of inertial manifolds and our approach, we have to place emphasis on the problem of recovering semi-dichoto-\ mies after perturbations and consider it as the main problem in the theory. This means that we have to construct not only the inertial manifold, but also the stable foliation. Besides our paper \cite{Anikushin2020Geom}, we know only the work of R.~Rosa and R.~Temam \cite{RosaTemam1996}, where such a foliation is constructed. So, for any point from the phase space there is a unique tracking trajectory on the inertial manifold. However, this uniqueness went unnoticed in \cite{RosaTemam1996} and, again to the best of our knowledge, it was firstly discovered by M.~Miklav\v{c}i\v{c} \cite{Miclavcic1991}, who, however, did not proceed to construct the foliation explicitly. So, this problem did not get much attention even from the authors that was close to it. Nevertheless, at this point quadratic Lyapunov functionals naturally arise to characterize semi-dichotomies. Thus, natural conditions for the existence of inertial manifolds are not the separate conditions such as the Cone Condition and the Squeezing Property used in most of works, but the squeezing w.~r.~t. quadratic Lyapunov functionals (which contain both the Cone Condition and the Squeezing Property). The role of quadratic Lyapunov functionals in the perturbation theory is well-known and the Frequency Theorem only promotes this role. This may also explain why the generalization of inertial manifolds theory for Banach spaces presented by N.~Koksch and S.~Siegmund in \cite{KokschSiegmund2002}, which is based on too straight generalizations of the classical conditions, did not succeed in applications.

From the very beginning of developments on frequency-domain methods, there appeared an analogy between two approaches, which led to the same frequency conditions. The first one (originated by V.~A.~Yakubovich) uses various classes of (not necessarily quadratic) Lyapunov functionals, which can be constructed from the Frequency Theorem. The second one, which was originated by V.~M.~Popov \cite{Popov1961}, is based on a priori estimates (which also highly rely on the Fourier transform at key moments) for integrals and does not appeal to Lyapunov functionals. Since Lyapunov functionals provide geometric insight into all the constructions and unify many scattered results in the field, they are more preferable for us.

Thus, from the above point of view, the result of M.~Miklav\v{c}i\v{c} \cite{Miclavcic1991} follows the line of V.~M.~Popov and it is not surprising that it leads to the same conditions, when both approaches work (see Subsection \ref{SUBSEC: ParabOtherFreqIneq}). It should be also noted that R.~A.~Smith used quadratic functionals for ODEs, but abandoned this approach (in favor of the method of a priori integral estimates) for infinite-dimensional problems \cite{Smith1994PB1,Smith1994PB2}, being unable to show the existence of such functionals.

In most of works classes of considered quadratic functionals are limited to a very special class (let us call it), which is suitable mostly for self-adjoint problems (see, for example, the survey of S.~Zelik \cite{Zelik2014}). Therefore, the corresponding proofs rely on this specificity at some moments. Quadratic functionals constructed by the Frequency Theorem are given by dynamics and they are usually non-standard (for example, for reaction-diffusion equations the Frequency Theorem constructs a compact operator $P$ \cite{Anikushin2019+OnCom}). It is also impossible to explicitly write such functionals or their analogs appropriate for non-self-adjoint problems (in finite-dimensions there are algorithms for approximating such functionals).

In applications, the Frequency Theorem often provides optimal and flexible conditions, which lead to sharp estimates for the rate of attraction and the dimension of inertial manifolds. In Subsection \ref{SUBSEC: OptimalityParab} we discuss in what sense the optimality should be understood. Namely, we will show that the frequency inequality is equivalent to the existence of a common quadratic Lyapunov functional for the class of problems (see Theorem \ref{TH: ParabOptimalityTh}). We show that the classical self-adjoint optimality result (firstly obtained by M.~Miklav\v{c}i\v{c} \cite{Miclavcic1991} and A.~V.~Romanov \cite{Romanov1994}) and some recent results of A.~Kostianko and S.~Zelik \cite{KostiankoZelik2019Kwak} and V.~V.~Chepyzhov, A.~Kostianko and S.~Zelik \cite{ChepyzhovKostiankoZelik2019} for certain non-self-adjoint problems can be considered as the result that shows the necessity of the frequency inequality for the existence of semi-dichotomies in the class of linear perturbations. We also discuss why it is almost impossible to obtain sharper conditions in the case when the frequency inequality is violated, but we still have semi-dichotomies for linear perturbations, by means of counterexamples to the Kalman conjecture (see the survey of G.~A.~Leonov and N.~V.~Kuznetsov \cite{LeoKuz2013Hidden}) from stability theory (see Remark \ref{REM: UniformAndKalman}).

It should be noted that the Frequency Theorem from \cite{Likhtarnikov1977} was rediscovered by J.-Cl.~Louis and D.~Wexler \cite{LouisWexler1991}. It may seem interesting that for various linear problems given by delay or parabolic equations, the exponential stability can be established with the use of a bounded in $\mathbb{H}$ operator $P$ as a theorem of R.~Datko states \cite{Datko1970}. But it seems natural that for nonlinear problems this is not so and one needs further restrictions as $P \in \mathcal{L}(\mathbb{H}_{\alpha})$ in our case (or $P \in \mathcal{L}(\mathbb{E};\mathbb{E}^{*})$ in the case of delay equations \cite{Anikushin2020FreqDelay}). However, certain nonlinear problems (mainly of parabolic type) may posses a compact or even a Hilbert–Schmidt operator $P$ (see \cite{Anikushin2019+OnCom}).

The Frequency Theorem in finite-dimensions, known as the Kalman-Yaku-\ bovich-Popov lemma (KYP lemma), has great success in the study of stability, periodic and almost periodic solutions and dimensional-like properties of autonomous and non-autonomous ODEs (see the monographs of A.~Kh.~Gelig, G.~A.~Leonov and V.~A.~Yakubovich \cite{Gelig1978}; N.~V.~Kuznetsov and V.~Reitmann \cite{KuzReit2020}).

This paper is organized as follows. In Section \ref{SEC: ProofsFreqTheorem} we prove a version of the Frequency Theorem for the quadratic regulator problem, which requires controllability assumptions. In Section \ref{SEC: RelaxingControllability} we prove Theorem \ref{TH: ParabComplexFreqTheorem}, which relaxes these assumptions in the case of our interest. In Section \ref{SEC: Realification} we consider a real version of Theorem \ref{TH: ParabComplexFreqTheorem} and discuss sign properties of the corresponding operator. In Section \ref{SEC: IMParabSemilinear} we apply our results to study inertial manifolds for semilinear parabolic equations. Here we discuss in what sense the frequency inequality is optimal and consider some of its forms, including the Spectral Gap Condition, R.~A.~Smith's condition and the Circle Criterion. At the end we present an illustrative example.
\section{Optimization of quadratic functionals}
\label{SEC: ProofsFreqTheorem}

As in the introduction within this section we suppose that all the spaces $\mathbb{H}$, $\mathbb{H}_{\alpha}$ and $\Xi$ are complex and the Hermitian form $\mathcal{F}$ is given in the same way as in \eqref{EQ: QuadraticFormComplex}.

We will also use some kind of $L_{2}$-controllability assumption w.~r.~t. initial conditions from $\mathbb{H}_{\alpha}$ as follows.

\begin{description}
	\item[\textbf{(CONT)}] For every $v_{0} \in \mathbb{H}_{\alpha}$ there exists $\xi(\cdot) \in L_{2}(0,+\infty;\Xi)$ such that for $v(\cdot)=v(\cdot,v_{0},\xi)$ we have $v(\cdot) \in L_{2}(0,+\infty;\mathbb{H}_{\alpha})$.
\end{description}

Now let us consider the space $\mathcal{Z}= L_{2}(0,+\infty;\mathbb{H}_{\alpha}) \times L_{2}(0,+\infty;\Xi)$. Let $\mathfrak{M}_{v_{0}}$ be the set of all $( v(\cdot),\xi(\cdot) ) \in \mathcal{Z}$ such that $v(\cdot)=v(\cdot,v_{0},\xi)$. Every such pair is called a \textit{process through} $v_{0}$. Our assumption \textbf{(CONT)} means that $\mathfrak{M}_{v_{0}}$ is non-empty for all $v_{0} \in \mathbb{H}_{\alpha}$. If this is so, from \eqref{EQ: MildSolution} one can see that $\mathfrak{M}_{v_{0}}$, where $v_{0} \in \mathbb{H}_{\alpha}$, is a closed affine subspace of $\mathcal{Z}$ given by a proper translate of $\mathfrak{M}_{0}$. Moreover, we have the following lemma.

\begin{lemma}
	\label{LEM: OperatorSubspaceLemma}
	There exists $D \in \mathcal{L}(\mathbb{H}_{\alpha};\mathcal{Z})$ such that $\mathfrak{M}_{v_{0}} = \mathfrak{M}_{0} + Dv_{0}$.
\end{lemma}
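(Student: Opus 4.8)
The plan is to exploit the affine structure of the sets $\mathfrak{M}_{v_0}$, to produce $D$ via an orthogonal projection, and then to verify its boundedness with the closed graph theorem. Since $\mathcal{Z}$ is a Hilbert space and $\mathfrak{M}_{0}$ is a closed subspace, one has the orthogonal decomposition $\mathcal{Z} = \mathfrak{M}_{0} \oplus \mathfrak{M}_{0}^{\perp}$; let $\Pi \in \mathcal{L}(\mathcal{Z})$ be the orthogonal projector onto $\mathfrak{M}_{0}^{\perp}$. For $v_{0} \in \mathbb{H}_{\alpha}$ the set $\mathfrak{M}_{v_{0}}$ is non-empty by \textbf{(CONT)} and, as noted above, equals $z_{0} + \mathfrak{M}_{0}$ for every $z_{0} \in \mathfrak{M}_{v_{0}}$; hence $\Pi(\mathfrak{M}_{v_{0}}) = \Pi z_{0} + \Pi(\mathfrak{M}_{0}) = \{\Pi z_{0}\}$ is a single point, independent of the chosen $z_{0}$. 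I define $Dv_{0} := \Pi z_{0}$. Then $Dv_{0} = z_{0} - (I - \Pi)z_{0} \in z_{0} + \mathfrak{M}_{0} = \mathfrak{M}_{v_{0}}$, so $\mathfrak{M}_{0} + Dv_{0} \subseteq \mathfrak{M}_{v_{0}}$; the reverse inclusion is immediate, which gives $\mathfrak{M}_{v_{0}} = \mathfrak{M}_{0} + Dv_{0}$.

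Linearity of $D$ follows from the superposition principle for \eqref{EQ: ControlSystem}: by \eqref{EQ: MildSolution} the solution map $(v_{0},\xi) \mapsto v(\cdot,v_{0},\xi)$ is linear, so for $z_{1} \in \mathfrak{M}_{v_{0}^{(1)}}$, $z_{2} \in \mathfrak{M}_{v_{0}^{(2)}}$ and scalars $a_{1},a_{2}$ one has $a_{1}z_{1} + a_{2}z_{2} \in \mathfrak{M}_{a_{1}v_{0}^{(1)} + a_{2}v_{0}^{(2)}}$; applying $\Pi$ and using its linearity yields $D(a_{1}v_{0}^{(1)} + a_{2}v_{0}^{(2)}) = a_{1}Dv_{0}^{(1)} + a_{2}Dv_{0}^{(2)}$.

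To see that $D$ is bounded I will invoke the closed graph theorem. Suppose $v_{0}^{(n)} \to v_{0}$ in $\mathbb{H}_{\alpha}$ and $Dv_{0}^{(n)} \to z$ in $\mathcal{Z}$, and write $Dv_{0}^{(n)} = (v^{(n)}(\cdot),\xi^{(n)}(\cdot))$ with $v^{(n)}(\cdot) = v(\cdot,v_{0}^{(n)},\xi^{(n)})$ and $z = (\tilde v(\cdot),\tilde\xi(\cdot))$; thus $\xi^{(n)} \to \tilde\xi$ in $L_{2}(0,+\infty;\Xi)$ and $v^{(n)} \to \tilde v$ in $L_{2}(0,+\infty;\mathbb{H}_{\alpha})$. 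Fix $T > 0$. From \eqref{EQ: MildSolution}, the bound $\|G(t)\|_{\mathcal{L}(\mathbb{H})} \le M_{T}$ for $t \in [0,T]$, the continuous embedding of $\mathbb{H}_{\alpha}$ into $\mathbb{H}$, and the elementary Cauchy--Schwarz estimate $\bigl| \int_{0}^{t} G(t-s)B\eta(s)\,ds \bigr| \le C_{T}\|\eta\|_{L_{2}(0,T;\Xi)}$ (valid for all $t \in [0,T]$ and $\eta \in L_{2}(0,T;\Xi)$), one obtains $v^{(n)}(t) \to v(t,v_{0},\tilde\xi)$ in $\mathbb{H}$ uniformly on $[0,T]$. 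On the other hand, $v^{(n)} \to \tilde v$ in $L_{2}(0,+\infty;\mathbb{H}_{\alpha})$ implies that along a subsequence $v^{(n)}(t) \to \tilde v(t)$ in $\mathbb{H}_{\alpha}$, hence in $\mathbb{H}$, for a.e.\ $t > 0$. Therefore $\tilde v(t) = v(t,v_{0},\tilde\xi)$ for a.e.\ $t > 0$, so $v(\cdot,v_{0},\tilde\xi) = \tilde v \in L_{2}(0,+\infty;\mathbb{H}_{\alpha})$ and $z \in \mathfrak{M}_{v_{0}}$. Since $z$ is also a limit of elements of the closed subspace $\mathfrak{M}_{0}^{\perp}$, we get $z \in \mathfrak{M}_{0}^{\perp}$ and thus $z = \Pi z = Dv_{0}$. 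Hence the graph of $D$ is closed and $D \in \mathcal{L}(\mathbb{H}_{\alpha};\mathcal{Z})$.

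I expect the last paragraph to be the main obstacle: one must reconcile two different modes of convergence of the sequence $v^{(n)}$ — the uniform $\mathbb{H}$-valued convergence forced by the mild-solution formula \eqref{EQ: MildSolution}, and the a.e.\ $\mathbb{H}_{\alpha}$-valued convergence coming from the $L_{2}$ limit — in order to conclude that the limiting pair is again a process through $v_{0}$. Passing to a subsequence is precisely what makes these two statements compatible and pins down the almost-everywhere limit. Everything else is routine given the well-posedness of \eqref{EQ: ControlSystem} recalled in the introduction.
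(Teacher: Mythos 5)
Your proposal is correct and follows essentially the same route as the paper: define $Dv_{0}$ as the component of any process through $v_{0}$ orthogonal to $\mathfrak{M}_{0}$, observe well-definedness from the affine structure, and conclude boundedness via the closed graph theorem. The only difference is that you carry out in detail the closed-graph verification that the paper dismisses as ``straightforward,'' and your reconciliation of the uniform $\mathbb{H}$-convergence from \eqref{EQ: MildSolution} with the a.e.\ $\mathbb{H}_{\alpha}$-convergence along a subsequence is exactly the right way to do it.
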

\begin{proof}
    For a fixed element $v_{0} \in \mathbb{H}_{\alpha}$ let $z \in \mathfrak{M}_{v_{0}}$ be any process through $v_{0}$. Since the difference of any two processes from $\mathfrak{M}_{v_{0}}$ is a process from $\mathfrak{M}_{0}$, we have that the definition $Dv_{0}:=z^{\bot}$, where $z^{\bot}$ is the part of $z$ orthogonal (in $\mathcal{Z}$) to $\mathfrak{M}_{0}$, is correct, i. e. it does not depend on $z \in \mathfrak{M}_{v_{0}}$. It is also clear that $z^{\bot} \in \mathfrak{M}_{v_{0}}$. Since $A$ is the generator of a $C_{0}$-semigroup in $\mathbb{H}$, using \eqref{EQ: MildSolution} it is straightforward to verify that $D$ is closed and, consequently, by the closed graph theorem, $D \in \mathcal{L}(\mathbb{H}_{\alpha};\mathcal{Z})$ as it is required.
\end{proof}

Let us consider the quadratic functional $\mathcal{J}_{\mathcal{F}}(v(\cdot),\xi(\cdot))$ in the space $\mathcal{Z}$ given by
\begin{equation}
\label{EQ: QuadraticFunctional}
\mathcal{J}_{\mathcal{F}}(v(\cdot),\xi(\cdot)) := \int_{0}^{\infty}\mathcal{F}(v(t),\xi(t))dt
\end{equation}
and the problem of minimization of $\mathcal{J}_{\mathcal{F}}$ on affine subspaces $\mathfrak{M}_{v_{0}}$ for $v_{0} \in \mathbb{H}_{\alpha}$. Under \textbf{(CONT)} this problem is correct. We say that a process $z \in \mathfrak{M}_{v_{0}}$ is \textit{optimal} if it is the minimum point of $\mathcal{J}_{\mathcal{F}}$ on $\mathfrak{M}_{v_{0}}$.

Let us also put $\mathcal{Z}_{1}:=L_{2}(0,+\infty;\mathbb{H}_{\alpha})$ and $\mathcal{Z}_{2}:=L_{2}(0,+\infty;\Xi)$ and consider the value
\begin{equation}
\label{EQ: Alpha1}
\alpha_{1}:= \inf_{(v(\cdot),\xi(\cdot)) \in \mathfrak{M}_{0}} \frac{\mathcal{J}_{\mathcal{F}}(v(\cdot),\xi(\cdot))}{\|v(\cdot)\|^{2}_{\mathcal{Z}_{1}} + \|\xi\|_{\mathcal{Z}_{2}}^{2}}.
\end{equation}
and also the value
\begin{equation}
\label{EQ: Alpha2}
\alpha_{2} := \inf \frac{ \mathcal{F}(v,\xi) }{ |v|^{2}_{\alpha} + |\xi|^{2}_{\Xi} },
\end{equation}
where the infimum is taken over all $\omega \in \mathbb{R}$, $v \in \mathcal{D}(A)$ and $\xi \in \Xi$ such that $i\omega v = Av + B\xi$. Moreover, we also consider the value
\begin{equation}
\label{EQ: Alpha3}
\alpha_{3} := \inf_{\omega \in \mathbb{R}} \inf_{ \xi \in \Xi} \frac{\mathcal{F}(-(A-i\omega I)^{-1}B\xi,\xi) }{|\xi|^{2}_{\Xi}}
\end{equation}
and say that it is well-defined if the spectrum of $A$ does not intersect with the imaginary axis and the operators $(A-i\omega I)^{-1}B$ are bounded in the norm of $\mathcal{L}(\Xi,\mathbb{H}_{\alpha})$ uniformly for all $\omega \in \mathbb{R}$.

\begin{lemma}
	\label{LEM: ParabOptimizationOnAffineSubspaces}
	Suppose $\alpha_{1} > 0$ and \textbf{(CONT)} is satisfied. Then for every $v_{0} \in \mathbb{H}_{\alpha}$ there exists a unique minimum $(v^{0}(\cdot,v_{0}), \xi^{0}(\cdot,v_{0}))$ of $\mathcal{J}_{\mathcal{F}}$ on $\mathfrak{M}_{v_{0}}$. Moreover, there exists $T \in \mathcal{L}(\mathbb{H}_{\alpha};\mathcal{Z})$ such that $(v^{0}(\cdot,v_{0}), \xi^{0}(\cdot,v_{0})) = T v_{0}$ for all $v_{0} \in \mathbb{H}_{\alpha}$ and for some $P \in \mathcal{L}(\mathbb{H}_{\alpha})$, which is self adjoint in $\mathbb{H}_{\alpha}$, we have $\mathcal{J}_{\mathcal{F}}(Tv_{0}) = (Pv,v)_{\alpha}$.
\end{lemma}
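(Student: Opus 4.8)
The plan is to recognize the minimization of $\mathcal{J}_{\mathcal{F}}$ over each affine subspace $\mathfrak{M}_{v_{0}}$ as a Lax--Milgram problem on the fixed Hilbert space $\mathfrak{M}_{0}$. First I would note that, since $\mathcal{F}_{1},\mathcal{F}_{2},\mathcal{F}_{3}$ are bounded operators, the functional $\mathcal{J}_{\mathcal{F}}$ defined in \eqref{EQ: QuadraticFunctional} is a bounded Hermitian quadratic form on $\mathcal{Z}$; let $\mathcal{B}(\cdot,\cdot)$ be its polar (Hermitian sesquilinear) form, so that $\mathcal{B}$ is bounded on $\mathcal{Z}\times\mathcal{Z}$ and $\mathcal{J}_{\mathcal{F}}(z)=\mathcal{B}(z,z)$. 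The essential observation is that the hypothesis $\alpha_{1}>0$ says precisely that $\mathcal{B}$ is coercive on the closed subspace $\mathfrak{M}_{0}$, i.e.
\[
\mathcal{B}(z,z)=\mathcal{J}_{\mathcal{F}}(z)\geq\alpha_{1}\|z\|^{2}_{\mathcal{Z}}\qquad\text{for all }z\in\mathfrak{M}_{0},
\]
which in general need not hold on all of $\mathcal{Z}$ (the form $\mathcal{J}_{\mathcal{F}}$ itself need not be sign-definite). Under \textbf{(CONT)}, Lemma \ref{LEM: OperatorSubspaceLemma} gives $\mathfrak{M}_{v_{0}}=\mathfrak{M}_{0}+Dv_{0}$ with $D\in\mathcal{L}(\mathbb{H}_{\alpha};\mathcal{Z})$.

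Next, for a fixed $v_{0}\in\mathbb{H}_{\alpha}$ I would write an arbitrary $z\in\mathfrak{M}_{v_{0}}$ as $z=z_{0}+Dv_{0}$ with $z_{0}\in\mathfrak{M}_{0}$ and expand
\[
\mathcal{J}_{\mathcal{F}}(z)=\mathcal{B}(z_{0},z_{0})+2\operatorname{Re}\mathcal{B}(z_{0},Dv_{0})+\mathcal{J}_{\mathcal{F}}(Dv_{0}).
\]
Thus minimizing $\mathcal{J}_{\mathcal{F}}$ over $\mathfrak{M}_{v_{0}}$ is the same as minimizing over $z_{0}\in\mathfrak{M}_{0}$ the functional $z_{0}\mapsto\mathcal{B}(z_{0},z_{0})+2\operatorname{Re}\mathcal{B}(z_{0},Dv_{0})$, which is a coercive bounded Hermitian form plus a bounded (anti)linear term on the Hilbert space $\mathfrak{M}_{0}$. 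By the (complex) Lax--Milgram theorem there is a unique $z_{0}=z_{0}(v_{0})\in\mathfrak{M}_{0}$ attaining the minimum; equivalently, $z_{0}(v_{0})$ is characterized by the Euler--Lagrange identity $\mathcal{B}(z_{0}(v_{0})+Dv_{0},w)=0$ for every $w\in\mathfrak{M}_{0}$. Putting $(v^{0}(\cdot,v_{0}),\xi^{0}(\cdot,v_{0})):=z_{0}(v_{0})+Dv_{0}$ then yields existence and uniqueness of the minimum of $\mathcal{J}_{\mathcal{F}}$ on $\mathfrak{M}_{v_{0}}$.

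To obtain the operators, I would exploit uniqueness: linearity of $D$ and of the defining identity forces $v_{0}\mapsto z_{0}(v_{0})$ to be linear, while coercivity gives the estimate $\|z_{0}(v_{0})\|_{\mathcal{Z}}\leq\alpha_{1}^{-1}\|\mathcal{B}\|\,\|D\|\,|v_{0}|_{\alpha}$; hence $Tv_{0}:=z_{0}(v_{0})+Dv_{0}$ defines $T\in\mathcal{L}(\mathbb{H}_{\alpha};\mathcal{Z})$ with $Tv_{0}=(v^{0}(\cdot,v_{0}),\xi^{0}(\cdot,v_{0}))$ for all $v_{0}$. Finally, representing $\mathcal{B}$ on $\mathcal{Z}$ by a bounded self-adjoint operator $\mathbf{B}=\mathbf{B}^{*}$ via the Riesz theorem, we get $\mathcal{J}_{\mathcal{F}}(Tv_{0})=\mathcal{B}(Tv_{0},Tv_{0})=(T^{*}\mathbf{B}Tv_{0},v_{0})_{\alpha}$, so $P:=T^{*}\mathbf{B}T\in\mathcal{L}(\mathbb{H}_{\alpha})$ is self-adjoint in $\mathbb{H}_{\alpha}$ and satisfies $\mathcal{J}_{\mathcal{F}}(Tv_{0})=(Pv_{0},v_{0})_{\alpha}$, as required.

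I do not expect a genuine obstacle here: the argument is essentially Lax--Milgram, and the content lies in the two reinterpretations above --- that \textbf{(CONT)} together with Lemma \ref{LEM: OperatorSubspaceLemma} turns the admissible sets into parallel translates $\mathfrak{M}_{0}+Dv_{0}$ of one fixed closed subspace, and that $\alpha_{1}>0$ is exactly coercivity of $\mathcal{J}_{\mathcal{F}}$ on that subspace. The points requiring a little care are that coercivity holds only on $\mathfrak{M}_{0}$ (so the cross term with $Dv_{0}$ must be carried through the minimization rather than discarded) and that the resulting ``value operator'' $P$ must be checked to be self-adjoint with respect to the $\mathbb{H}_{\alpha}$-inner product, not the ambient $\mathcal{Z}$-inner product.
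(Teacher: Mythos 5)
Your proposal is correct and follows essentially the same route as the paper: the paper likewise uses Lemma \ref{LEM: OperatorSubspaceLemma} to reduce to the fixed subspace $\mathfrak{M}_{0}$, represents $\mathcal{J}_{\mathcal{F}}$ by a bounded self-adjoint operator $Q$ on $\mathcal{Z}$, identifies $\alpha_{1}>0$ with coercivity of the form on $\mathfrak{M}_{0}$, solves the resulting Euler--Lagrange equation by Lax--Milgram, and sets $P:=T^{*}QT$. The only cosmetic difference is that the paper works with $Q$ and the orthogonal projector $\Pi$ onto $\mathfrak{M}_{0}$ rather than directly with the sesquilinear form $\mathcal{B}$.
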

\begin{proof}
The lemma can proved in the same way as corresponding lemmas in \cite{Likhtarnikov1976}. Note that there exists $Q \in \mathcal{L}(\mathcal{Z})$, which is self-adjoint in $\mathcal{Z}$ and such that $\mathcal{J}_{\mathcal{Z}}(z) = (Qz,z)_{\mathcal{Z}}$ for every $z \in \mathcal{Z}$. Now let $v_{0} \in \mathbb{H}_{\alpha}$, $z \in \mathfrak{M}_{v_{0}}$ and $h \in \mathfrak{M}_{0}$.  We have
\begin{equation}
\mathcal{J}_{\mathcal{F}}(z+h) = (Q(z+h),z+h)_{\mathcal{Z}} = (Qz,z)_{\mathcal{Z}} + 2\operatorname{Re}(Qz,h)_{\mathcal{Z}} + (Qh,h)_{\mathcal{Z}}.
\end{equation}
For $z$ to be a minimum point of $\mathcal{J}_{\mathcal{F}}$ on $\mathfrak{M}_{v_{0}}$ it is necessary and sufficient that $(Qz,h)_{\mathcal{Z}}=0$ and $(Qh,h)_{\mathcal{Z}} \geq 0$ for all $h \in \mathfrak{M}_{0}$. Note that the latter condition implies that $\alpha_{1} \geq 0$. From Lemma \ref{LEM: OperatorSubspaceLemma} for every $z \in \mathcal{M}_{v_{0}}$ we have $z = z_{0} + Dv_{0}$, where $z_{0} \in \mathcal{M}_{0}$. Let $\Pi \colon \mathcal{Z} \to \mathfrak{M}_{0}$ be the orgthogonal projector onto $\mathfrak{M}_{0}$. Then $(Qz,h)=0$ is equivalent to $(\Pi Qz_{0} + \Pi Dv_{0},h) = 0$ or $\Pi Qz_{0} = -\Pi Dv_{0}$. Since $(\Pi Qh,h)_{\mathcal{Z}} = (Qh,h)_{\mathcal{Z}}$ for $h \in \mathfrak{M}_{0}$, from $\alpha_{1}>0$ we have that $\Pi Q \colon \mathfrak{M}_{0} \to \mathfrak{M}_{0}$ has coercive quadratic form. Therefore, by the Lax-Milgram theorem, the equation $\Pi Qz_{0} = -\Pi Dv_{0}$ has a unique solution $z_{0} = -(\Pi Q)^{-1}\Pi D v_{0}$. Then $z= z_{0} + Dv_{0} = -(\Pi Q)^{-1}\Pi D v_{0}+Dv_{0} = Tv_{0}$ is the unique optimal process and $T \in \mathcal{L}(\mathbb{H}_{\alpha};\mathcal{Z})$. Now
\begin{equation}
\mathcal{J}_{\mathcal{F}}(Tv_{0}) = (QTv_{0},Tv_{0})_{\mathcal{Z}} = (T^{*}QTv_{0},v_{0})_{\alpha}.
\end{equation}
Putting $P:=T^{*}QT$ we finish the proof.
\end{proof}

The following lemma is the main reason why the Frequency Theorem is useful in applications. For basic facts about the Fourier transform in $L_{2}$ we refer to \cite{EngelNagel2000}.

\begin{lemma}
	\label{EQ: ParabAlphaEquivalenceLemma}
	1). Suppose that $\alpha_{3}$ from \eqref{EQ: Alpha3} is well-defined, i.~e. the operator $A$ does not have spectrum on the imaginary axis and $(A-i\omega)^{-1}B$ are bounded in the norm of $\mathcal{L}(\mathbb{H}_{\alpha})$ uniformly in $\omega \in \mathbb{R}$. Then $\alpha_{3}>0$ if and only if $\alpha_{2} > 0$.
	
	2). $\alpha_{2}>0$ implies $\alpha_{1}>0$.
\end{lemma}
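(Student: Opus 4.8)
The plan is to treat the two statements separately: part 1) is an elementary consequence of the uniform resolvent bound, while part 2) rests on a Plancherel computation together with one genuinely technical fact about the Fourier transform of a mild solution.

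\emph{Part 1.} Under the stated hypothesis, $(A - i\omega I)^{-1}$ is a bounded operator on $\mathbb{H}$ for every $\omega \in \mathbb{R}$ and $C := \sup_{\omega \in \mathbb{R}} \|(A - i\omega I)^{-1}B\|_{\mathcal{L}(\Xi;\mathbb{H}_{\alpha})} < +\infty$. The first step is the observation that the triples $(\omega, v, \xi)$ admitted in \eqref{EQ: Alpha2} — those with $v \in \mathcal{D}(A)$ and $i\omega v = Av + B\xi$ — are exactly the triples $(\omega, -(A - i\omega I)^{-1}B\xi, \xi)$ with $\omega \in \mathbb{R}$ and $\xi \in \Xi$ arbitrary: indeed $i\omega v = Av + B\xi$ gives $(A - i\omega I)v = -B\xi$, hence $v = -(A - i\omega I)^{-1}B\xi$, while conversely $-(A - i\omega I)^{-1}B\xi$ automatically lies in $\mathcal{D}(A)$ and solves the equation. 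If $\alpha_{2} > 0$ then, along such a triple, $\mathcal{F}(v,\xi) \geq \alpha_{2}(|v|_{\alpha}^{2} + |\xi|_{\Xi}^{2}) \geq \alpha_{2}|\xi|_{\Xi}^{2}$, so $\alpha_{3} \geq \alpha_{2} > 0$. Conversely, if $\alpha_{3} > 0$ then for the same triple $\mathcal{F}(v,\xi) \geq \alpha_{3}|\xi|_{\Xi}^{2}$ by \eqref{EQ: Alpha3}, and since $|v|_{\alpha}^{2} \leq C^{2}|\xi|_{\Xi}^{2}$ implies $|\xi|_{\Xi}^{2} \geq (1 + C^{2})^{-1}(|v|_{\alpha}^{2} + |\xi|_{\Xi}^{2})$, we get $\mathcal{F}(v,\xi) \geq \alpha_{3}(1 + C^{2})^{-1}(|v|_{\alpha}^{2} + |\xi|_{\Xi}^{2})$, whence $\alpha_{2} \geq \alpha_{3}(1 + C^{2})^{-1} > 0$.

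\emph{Part 2.} I would start by fixing a process $(v(\cdot), \xi(\cdot)) \in \mathfrak{M}_{0}$, so that $\xi(\cdot) \in L_{2}(0,+\infty;\Xi)$, $v(\cdot) \in L_{2}(0,+\infty;\mathbb{H}_{\alpha})$ and $v(t) = \int_{0}^{t} G(t-s)B\xi(s)\,ds$. Extend $v(\cdot)$ and $\xi(\cdot)$ by zero onto $\mathbb{R}$ and pass to the Fourier transform; by the vector-valued Plancherel theorem $\widehat{v} \in L_{2}(\mathbb{R};\mathbb{H}_{\alpha})$, $\widehat{\xi} \in L_{2}(\mathbb{R};\Xi)$, with $2\pi\|v(\cdot)\|_{\mathcal{Z}_{1}}^{2} = \int_{\mathbb{R}}|\widehat{v}(\omega)|_{\alpha}^{2}\,d\omega$ and $2\pi\|\xi(\cdot)\|_{\mathcal{Z}_{2}}^{2} = \int_{\mathbb{R}}|\widehat{\xi}(\omega)|_{\Xi}^{2}\,d\omega$. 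Since $\mathcal{F}_{1}, \mathcal{F}_{2}, \mathcal{F}_{3}$ are bounded operators acting pointwise in time, they commute with the Fourier transform, so Parseval's identity applied to the three terms of $\mathcal{F}$ gives $2\pi\,\mathcal{J}_{\mathcal{F}}(v(\cdot),\xi(\cdot)) = \int_{\mathbb{R}} \mathcal{F}(\widehat{v}(\omega),\widehat{\xi}(\omega))\,d\omega$ (the integrand is in $L_{1}(\mathbb{R})$ because $\mathcal{F}$ is a bounded form). The crucial claim is that for almost every $\omega \in \mathbb{R}$ one has $\widehat{v}(\omega) \in \mathcal{D}(A)$ and $i\omega\widehat{v}(\omega) = A\widehat{v}(\omega) + B\widehat{\xi}(\omega)$; granting it, for a.e. $\omega$ the triple $(\omega, \widehat{v}(\omega), \widehat{\xi}(\omega))$ is admissible in \eqref{EQ: Alpha2}, so $\mathcal{F}(\widehat{v}(\omega),\widehat{\xi}(\omega)) \geq \alpha_{2}(|\widehat{v}(\omega)|_{\alpha}^{2} + |\widehat{\xi}(\omega)|_{\Xi}^{2})$ a.e., and integrating while using the three Plancherel identities yields $\mathcal{J}_{\mathcal{F}}(v(\cdot),\xi(\cdot)) \geq \alpha_{2}(\|v(\cdot)\|_{\mathcal{Z}_{1}}^{2} + \|\xi(\cdot)\|_{\mathcal{Z}_{2}}^{2})$; since the process is arbitrary, $\alpha_{1} \geq \alpha_{2} > 0$.

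The main obstacle is the frequency relation in part 2, which is subtle precisely because $A$ is unbounded — it is not obvious that $\widehat{v}(\omega)$ lands in $\mathcal{D}(A)$ — and $G(t)$ need not be integrable. I would derive it as follows. For the mild solution with zero initial datum one has the standard identity $\int_{0}^{t} v(s)\,ds \in \mathcal{D}(A)$ together with $v(t) = A\int_{0}^{t} v(s)\,ds + \int_{0}^{t} B\xi(s)\,ds$. Taking the Laplace transform $\mathcal{L}$ of this identity — all integrals converging absolutely for $\operatorname{Re} p > 0$ since $v(\cdot), \xi(\cdot) \in L_{2}(0,+\infty)$ — and using Fubini's theorem, integration by parts, and the closedness of $A$ to interchange $A$ with the integral, one obtains $\mathcal{L}v(p) \in \mathcal{D}(A)$ and $(pI - A)\mathcal{L}v(p) = B\mathcal{L}\xi(p)$ for every $p$ with $\operatorname{Re} p > 0$. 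Now for $\varepsilon > 0$ set $v_{\varepsilon}(t) := e^{-\varepsilon t}v(t)$, $\xi_{\varepsilon}(t) := e^{-\varepsilon t}\xi(t)$ (zero for $t<0$); then $\widehat{v_{\varepsilon}}(\omega) = \mathcal{L}v(\varepsilon + i\omega)$, $\widehat{\xi_{\varepsilon}}(\omega) = \mathcal{L}\xi(\varepsilon + i\omega)$, so $\widehat{v_{\varepsilon}}(\omega) \in \mathcal{D}(A)$ and $(\varepsilon + i\omega)\widehat{v_{\varepsilon}}(\omega) = A\widehat{v_{\varepsilon}}(\omega) + B\widehat{\xi_{\varepsilon}}(\omega)$ for a.e. $\omega$. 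As $\varepsilon \downarrow 0$ we have $v_{\varepsilon} \to v$ and $\xi_{\varepsilon} \to \xi$ in $L_{2}(0,+\infty)$ by dominated convergence, hence $\widehat{v_{\varepsilon}} \to \widehat{v}$, $\widehat{\xi_{\varepsilon}} \to \widehat{\xi}$ in $L_{2}(\mathbb{R})$ and, along a suitable sequence $\varepsilon_{n} \downarrow 0$, pointwise a.e.; then $A\widehat{v_{\varepsilon_{n}}}(\omega) = (\varepsilon_{n} + i\omega)\widehat{v_{\varepsilon_{n}}}(\omega) - B\widehat{\xi_{\varepsilon_{n}}}(\omega) \to i\omega\widehat{v}(\omega) - B\widehat{\xi}(\omega)$ a.e., and the closedness of $A$ gives $\widehat{v}(\omega) \in \mathcal{D}(A)$, $i\omega\widehat{v}(\omega) = A\widehat{v}(\omega) + B\widehat{\xi}(\omega)$ a.e., as required. (Equivalently, this last step realizes $\widehat{v}$, $\widehat{\xi}$ as the $L_{2}$-boundary values on $i\mathbb{R}$ of the Hardy-class functions $\mathcal{L}v$, $\mathcal{L}\xi$ via the Paley--Wiener theorem.) I expect this Laplace-transform / boundary-value argument to be the only nonroutine ingredient; everything else is bookkeeping with Plancherel's theorem and with the definitions of $\alpha_{1}, \alpha_{2}, \alpha_{3}$.
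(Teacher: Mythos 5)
Your proof is correct and follows essentially the same route as the paper's: part 1 via the resolvent parametrization of the admissible triples together with the uniform bound on $(A-i\omega I)^{-1}B$, and part 2 via zero-extension, Plancherel's theorem, and the frequency-domain identity $i\omega\hat{v}(\omega)=A\hat{v}(\omega)+B\hat{\xi}(\omega)$ with $\hat{v}(\omega)\in\mathcal{D}(A)$ a.e. The only difference is that the paper simply cites Lemma 11 of Louis and Wexler for that identity, whereas you supply a correct self-contained derivation through the integrated mild-solution identity, the Laplace transform on $\operatorname{Re}p>0$, and a boundary-value limit $\varepsilon\downarrow 0$ using the closedness of $A$.
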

\begin{proof}
	1). It is clear that $\alpha_{2} \leq \alpha_{3}$. Thus $\alpha_{2}>0$ implies $\alpha_{3} > 0$. Now let us consider the value 
	\begin{equation}
	\beta := 1 + \sup_{\omega \in \mathbb{R}} \|(A-i\omega)^{-1}B\|_{\Xi \to \mathbb{H}_{\alpha}}.
	\end{equation} 
	It is easy to see that $\alpha_{2} \geq \beta^{-1} \alpha_{3}$ and, consequently, 1) is proved.
	
	2). For any process $z(\cdot)=(v(\cdot),\xi(\cdot)) \in \mathfrak{M}_{0}$ we extend it by zero to $(-\infty,0]$ and consider its Fourier transform denoted by $\hat{z}(\cdot) = (\hat{v}(\cdot), \hat{\xi}(\cdot) )$. Note that since the embedding $\mathbb{H}_{\alpha} \subset \mathbb{H}$ is continuous, the Fourier transform in the space $L_{2}(\mathbb{R};\mathbb{H} \times \Xi)$ coincides on $L_{2}(\mathbb{R};\mathbb{H}_{\alpha} \times \Xi)$ with the Fourier transform in $L_{2}(\mathbb{R};\mathbb{H}_{\alpha} \times \Xi)$. By Lemma 11 from \cite{LouisWexler1991} (or see \cite{Likhtarnikov1977}) we have $\hat{v}(\omega) \in \mathcal{D}(A)$ for almost all $\omega \in \mathbb{R}$ and
	\begin{equation}
	\label{EQ: FourierTransformProcess}
	i\omega \hat{v}(\omega) = A \hat{v}(\omega) + B\hat{\xi}(\omega).
	\end{equation}
	By the Plancherel theorem for the Fourier transform \cite{EngelNagel2000} we have
	\begin{equation}
	\begin{split}
	\int_{0}^{+\infty} \mathcal{F}(v(t),\xi(t))dt = \int_{-\infty}^{+\infty} \mathcal{F}(\hat{v}(\omega),\hat{\xi}(\omega))d\omega \geq \\ \geq \alpha_{2} \int_{-\infty}^{+\infty}\left(|\hat{v}(\omega)|^{2}_{\alpha}+|\hat{\xi}(\omega)|^{2}_{\Xi} \right) d\omega = \alpha_{2}\int_{0}^{\infty} \left(|v(t)|^{2}_{\alpha} + |\xi(t)|^{2}_{\Xi} \right)dt.
	\end{split}
	\end{equation}
	Thus, $\alpha_{1} \geq \alpha_{2}$ and the proof is finished.
\end{proof}

It is remained to show that $\alpha_{1}>0$ implies that $\alpha_{2} > 0$. This will be done in the proof of Theorem \ref{TH: OptimalProcessParab} below with the aid of the following lemma (see Lemma 5 in \cite{Anikushin2020FreqDelay} for a proof).
\begin{lemma}
	\label{LEM: ConstantDifferentiationProperty}
	Let $f(\cdot)$ be a twice continuously differentiable $\mathbb{H}$-valued function. Suppose that $v_{0} \in \mathcal{D}(A)$ is such that $Av_{0} + f(0) \in \mathcal{D}(A)$. Then for the classical solution $v(\cdot)$ of
	\begin{equation}
	\label{EQ: InhomogeneousConstantEquation}
	\dot{v}(t) = A v(t) + f(t)
	\end{equation}
	with $v(0)=v_{0}$ we have
	\begin{equation}
	\lim\limits_{h \to 0+}\frac{v(h)-v(0)}{h} = v'(0)=Av(0)+f(0),
	\end{equation}
	where the limit exists in $\mathcal{D}(A)$ endowed with the graph norm.
\end{lemma}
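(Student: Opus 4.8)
The plan is to derive the statement from the classical regularity theory of the inhomogeneous abstract Cauchy problem, applied \emph{twice}, and then to re-read the conclusion in the graph norm. First I would recall that since $v_{0}\in\mathcal{D}(A)$ and $f(\cdot)$ is $C^{1}$, the mild solution
\[
v(t)=G(t)v_{0}+\int_{0}^{t}G(t-s)f(s)\,ds
\]
is in fact a classical solution: $v\in C^{1}([0,+\infty);\mathbb{H})$, $v(t)\in\mathcal{D}(A)$ for all $t\geq0$, and $\dot v(t)=Av(t)+f(t)$ (see \cite{Krein1971,EngelNagel2000}). Differentiating the variation-of-constants formula — using $\frac{d}{dt}G(t)v_{0}=G(t)Av_{0}$, which holds because $v_{0}\in\mathcal{D}(A)$, and, after the substitution $s\mapsto t-s$ in the Duhamel term, the Leibniz rule $\frac{d}{dt}\int_{0}^{t}G(t-s)f(s)\,ds=G(t)f(0)+\int_{0}^{t}G(t-s)f'(s)\,ds$, legitimate since $f\in C^{1}$ — one obtains
\[
v'(t)=G(t)\bigl(Av_{0}+f(0)\bigr)+\int_{0}^{t}G(t-s)f'(s)\,ds .
\]
In other words, $w(\cdot):=v'(\cdot)$ is the mild solution of $\dot w=Aw+f'$ with initial datum $w(0)=Av_{0}+f(0)$.

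The next step is to apply the same regularity theorem to $w$. Its hypotheses are met precisely because of the assumptions of the lemma: $w(0)=Av_{0}+f(0)\in\mathcal{D}(A)$ by the compatibility condition, and $f'(\cdot)$ is $C^{1}$ because $f(\cdot)$ is $C^{2}$. Hence $w$ is a classical solution; in particular it is differentiable from the right at $t=0$, with
\[
\lim_{h\to0+}\frac{w(h)-w(0)}{h}=w'(0)=Aw(0)+f'(0)=A\bigl(Av_{0}+f(0)\bigr)+f'(0).
\]

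Finally I would combine these facts. From $\dot v(t)=Av(t)+f(t)$ we have $Av(t)=w(t)-f(t)$ for all $t\geq0$, hence
\[
\frac{Av(h)-Av(0)}{h}=\frac{w(h)-w(0)}{h}-\frac{f(h)-f(0)}{h},
\]
and letting $h\to0+$ the right-hand side tends to $\bigl(A(Av_{0}+f(0))+f'(0)\bigr)-f'(0)=A(Av_{0}+f(0))=Av'(0)$. Since also $\frac{v(h)-v(0)}{h}\to v'(0)$ in $\mathbb{H}$ (just differentiability of $v$ at $0$), and since $\frac{v(h)-v(0)}{h}\in\mathcal{D}(A)$ for $h>0$ (because $v(h),v(0)\in\mathcal{D}(A)$) while $v'(0)=Av_{0}+f(0)\in\mathcal{D}(A)$, these two limits together say precisely that $\frac{v(h)-v(0)}{h}\to v'(0)$ in $\mathcal{D}(A)$ endowed with the graph norm, which is the assertion.

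The only delicate point — and the main obstacle — is the rigorous differentiation of the variation-of-constants formula yielding the expression for $v'(t)$, in particular the appearance of the boundary term $G(t)f(0)$ and the interchange of $\frac{d}{dt}$ with the Duhamel integral; this is exactly where the $C^{1}$-regularity of $f$ (rather than mere continuity) is used, and the rest of the argument is a bootstrap of this observation followed by bookkeeping in the graph norm.
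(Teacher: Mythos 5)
Your argument is correct and complete. Note that the paper itself gives no in-text proof of this lemma --- it simply defers to Lemma 5 of the cited preprint \cite{Anikushin2020FreqDelay} --- so a line-by-line comparison is not possible; but your bootstrap is the standard route one would expect there: apply the classical regularity theorem once to get $v'(t)=G(t)\bigl(Av_{0}+f(0)\bigr)+\int_{0}^{t}G(t-s)f'(s)\,ds$, recognize $w:=v'$ as the mild solution of the differentiated problem, and apply the theorem a second time, which is exactly where the two hypotheses ($f\in C^{2}$ gives $f'\in C^{1}$, and the compatibility condition gives $w(0)\in\mathcal{D}(A)$) are consumed. The final bookkeeping is also sound: since $Av(t)=w(t)-f(t)$, the difference quotients of $Av$ converge to $A\bigl(Av_{0}+f(0)\bigr)$, and together with the $\mathbb{H}$-convergence of the difference quotients of $v$ this is precisely convergence in the graph norm (closedness of $A$ even makes the membership $v'(0)\in\mathcal{D}(A)$ automatic at this point, consistently with the hypothesis).
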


The following theorem generalizes corresponding results from \cite{Likhtarnikov1977, LouisWexler1991}, which become a special case of our result if $\mathbb{H}_{\alpha}=\mathbb{H}$.

\begin{theorem}
	\label{TH: OptimalProcessParab}
	Let the pair $(A,B)$ satisfy \textbf{(CONT)}. We have the following:
	
	1) If $\alpha_{1}>0$, then for every $v_{0} \in \mathbb{H}_{\alpha}$ the quadratic functional \eqref{EQ: QuadraticFunctional} has a unique minimum $(v^{0}(\cdot,v_{0}),\xi^{0}(\cdot,v_{0}))$ on $\mathfrak{M}_{v_{0}}$ and the map
	\begin{equation}
	\mathbb{H}_{\alpha} \ni v_{0} \mapsto (v^{0}(\cdot,v_{0}),\xi^{0}(\cdot,v_{0})) \in \mathcal{Z}
	\end{equation}
	is a linear bounded operator. Moreover, there exists $P \in \mathcal{L}(\mathbb{H}_{\alpha})$, which is self-adjoint in $\mathbb{H}_{\alpha}$ and such that for all $v_{0} \in \mathbb{H}_{\alpha}$ we have
	\begin{equation}
	(Pv_{0}, v_{0} )_{\alpha} = \mathcal{J}_{\mathcal{F}}((v^{0}(\cdot,v_{0}),\xi^{0}(\cdot,v_{0})).
	\end{equation}
	For the quadratic functional $V(v):=( Pv, v )_{\alpha}$ and any $T \geq 0$ we have
	\begin{equation}
	\label{EQ: OperatorSolutionsInequality}
	V(v(T)) - V(v_{0}) + \int_{0}^{T} \mathcal{F}(v(t),\xi(t))dt \geq 0,
	\end{equation}
	where $v(t)=v(t,v_{0},\xi)$ is the solution to \eqref{EQ: ControlSystem} with arbitrary $v(0)=v_{0} \in \mathbb{H}_{\alpha}$ and $\xi(\cdot) \in L_{2}(0,T;\Xi)$ such that $v(\cdot) \in C([0,T];\mathbb{H}_{\alpha})$.
	
	2) $\alpha_{1}>0$ is equivalent to $\alpha_{2}>0$. If $\alpha_{3}$ is well-defined in the above given sense, then $\alpha_{2}>0$ is equivalent to $\alpha_{3}>0$.
\end{theorem}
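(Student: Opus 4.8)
The plan is to combine the three lemmas already at hand and to supply the one missing implication, namely that $\alpha_1>0$ forces $\alpha_2>0$. I would organize the argument in two blocks, mirroring the two items of the statement.

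For item 1), assume $\alpha_1>0$. Then Lemma \ref{LEM: ParabOptimizationOnAffineSubspaces} applies verbatim and yields: the existence and uniqueness of the optimal process $(v^0(\cdot,v_0),\xi^0(\cdot,v_0))=Tv_0$ with $T\in\mathcal{L}(\mathbb{H}_\alpha;\mathcal{Z})$, and a self-adjoint $P:=T^*QT\in\mathcal{L}(\mathbb{H}_\alpha)$ with $(Pv_0,v_0)_\alpha=\mathcal{J}_{\mathcal{F}}(Tv_0)$. So the only new content in item 1) is the integral inequality \eqref{EQ: OperatorSolutionsInequality}. To get it, fix $v_0\in\mathbb{H}_\alpha$, a horizon $T\geq 0$, and an admissible control $\xi(\cdot)\in L_2(0,T;\Xi)$ with $v(\cdot)=v(\cdot,v_0,\xi)\in C([0,T];\mathbb{H}_\alpha)$. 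Now build a competitor process on $[0,+\infty)$ by concatenating $\xi$ on $[0,T]$ with the optimal control $\xi^0(\cdot,v(T))$ steering from $v(T)$; call the resulting pair $\tilde z\in\mathfrak{M}_{v_0}$. By optimality of $(v^0(\cdot,v_0),\xi^0(\cdot,v_0))$ we have $\mathcal{J}_{\mathcal{F}}(v^0(\cdot,v_0),\xi^0(\cdot,v_0))\leq\mathcal{J}_{\mathcal{F}}(\tilde z)$, and splitting the integral defining $\mathcal{J}_{\mathcal{F}}(\tilde z)$ at $t=T$ gives
\begin{equation*}
(Pv_0,v_0)_\alpha \leq \int_0^T\mathcal{F}(v(t),\xi(t))\,dt + (Pv(T),v(T))_\alpha,
\end{equation*}
which is exactly \eqref{EQ: OperatorSolutionsInequality}. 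One small point to check here is that $\xi^0(\cdot,v(T))$ is indeed in $L_2(0,+\infty;\Xi)$ with its trajectory in $L_2(0,+\infty;\mathbb{H}_\alpha)$ (so that the concatenation lies in $\mathcal{Z}$ and in $\mathfrak{M}_{v_0}$); this is guaranteed because $v(T)\in\mathbb{H}_\alpha$ and $(v^0(\cdot,v(T)),\xi^0(\cdot,v(T)))=Tv(T)\in\mathcal{Z}$.

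For item 2), the implications $\alpha_2>0\Rightarrow\alpha_1>0$ and the equivalence $\alpha_2>0\Leftrightarrow\alpha_3>0$ (when $\alpha_3$ is well-defined) are precisely Lemma \ref{EQ: ParabAlphaEquivalenceLemma}. So the heart of the theorem is: $\alpha_1>0\Rightarrow\alpha_2>0$. I would argue by contraposition / contradiction. Suppose $\alpha_2\leq 0$, i.e. there is a sequence $\omega_n\in\mathbb{R}$, $v_n\in\mathcal{D}(A)$, $\xi_n\in\Xi$ with $i\omega_n v_n=Av_n+B\xi_n$, normalized so that $|v_n|_\alpha^2+|\xi_n|_\Xi^2=1$, and $\mathcal{F}(v_n,\xi_n)\leq \varepsilon_n\to 0$ (or even $\leq 0$). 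From such a ``frequency-domain'' near-solution I want to manufacture an honest process $z\in\mathfrak{M}_0$ whose Rayleigh quotient in \eqref{EQ: Alpha1} is close to $\mathcal{F}(v_n,\xi_n)$, forcing $\alpha_1\leq 0$. The natural candidate is $v(t)=\chi(t)e^{i\omega_n t}v_n$, $\xi(t)=\chi(t)e^{i\omega_n t}\xi_n$ for a smooth cutoff $\chi$ supported on a long interval; but this pair does not exactly satisfy $\dot v=Av+B\xi$ because of the $\chi'$ term. This is where Lemma \ref{LEM: ConstantDifferentiationProperty} enters: it lets me differentiate the classical solution of the inhomogeneous equation with the required regularity and control the correction terms. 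Concretely, I would take $f(t)=\chi(t)e^{i\omega_n t}B\xi_n + (\text{correction})$, solve \eqref{EQ: InhomogeneousConstantEquation} with $v(0)=0$, and estimate the discrepancy: on a window of length $L$ the ``good'' part of $\mathcal{J}_{\mathcal{F}}$ scales like $L\cdot\mathcal{F}(v_n,\xi_n)$ and like $L$ for the denominator, while the boundary/cutoff corrections are $O(1)$ in $L$; letting $L\to\infty$ and then $n\to\infty$ yields $\alpha_1\leq 0$. (A density argument in $\omega$, using that $(A-i\omega I)^{-1}$ exists only when $\alpha_3$ is well-defined, may be needed to pass from $\xi_n$ to genuinely admissible data; alternatively one works directly with the $v_n\in\mathcal{D}(A)$.)

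The main obstacle is this last implication, and within it the delicate part is the quantitative estimate showing that the cutoff-induced error terms stay bounded while the main terms grow linearly in the window length — i.e. making the ``almost-periodic-solution-produces-an-$L_2$-process'' heuristic rigorous in the $\mathbb{H}_\alpha$-setting, where one must track norms in both $\mathbb{H}$ and $\mathbb{H}_\alpha$ and use \textbf{(RES)}-type bounds on the resolvent to keep the $\mathbb{H}_\alpha$-norm of the constructed trajectory under control. Lemma \ref{LEM: ConstantDifferentiationProperty} is tailored exactly to supply the regularity needed so that the differentiation of $V$ along the constructed classical solution is legitimate. Everything else is bookkeeping: assembling the three lemmas and the concatenation argument for \eqref{EQ: OperatorSolutionsInequality}.
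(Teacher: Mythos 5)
Your treatment of item 1) coincides with the paper's: Lemma \ref{LEM: ParabOptimizationOnAffineSubspaces} gives the optimal process, the operator $T$ and $P=T^{*}QT$, and the concatenation-plus-optimality argument yields \eqref{EQ: OperatorSolutionsInequality} exactly as in the paper. Likewise, you correctly isolate the only missing implication in item 2), namely $\alpha_{1}>0\Rightarrow\alpha_{2}>0$.

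For that implication, however, your plan has a genuine gap. The pair $v(t)=\chi(t)e^{i\omega_{n}t}v_{n}$, $\xi(t)=\chi(t)e^{i\omega_{n}t}\xi_{n}$ fails to solve $\dot v=Av+B\xi$ by the term $\chi'(t)e^{i\omega_{n}t}v_{n}$, and this defect cannot in general be absorbed into the control: membership in $\mathfrak{M}_{0}$ requires the inhomogeneity to lie in the range of $B$, and $\chi'(t)v_{n}\in\operatorname{Range}(B)$ is false for a generic pair $(A,B)$ (this is precisely why Section \ref{SEC: RelaxingControllability} has to enlarge the control space to $\Xi\times\mathbb{H}$, which changes $\alpha_{1}$). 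Even granting surjectivity of $B$, your normalization $|v_{n}|^{2}_{\alpha}+|\xi_{n}|^{2}_{\Xi}=1$ gives no control on $|v_{n}|_{\mathcal{D}(A)}$ (think $|\omega_{n}|\to\infty$), so the correction is not $O(1)$ as you assert. The paper avoids the approximation problem entirely and argues in the forward direction: since $\alpha_{1}>0$ is an open condition, the perturbed form $\mathcal{F}_{\delta}(v,\xi)=\mathcal{F}(v,\xi)-\delta(|v|^{2}_{\alpha}+|\xi|^{2}_{\Xi})$ still has positive $\alpha_{1}$ for small $\delta>0$, so item 1) (already proved) supplies $P_{\delta}$ with the integral inequality carrying a margin $\delta\int_{0}^{h}(|v|^{2}_{\alpha}+|\xi|^{2}_{\Xi})\,ds$. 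One then takes a constant control $\xi(\cdot)\equiv\xi_{0}$ and $v_{0}\in\mathcal{D}(A)$ with $Av_{0}+B\xi_{0}\in\mathcal{D}(A)$, invokes Lemma \ref{LEM: ConstantDifferentiationProperty} to differentiate at $t=0$, divides by $h$ and lets $h\to0+$ to obtain $2\operatorname{Re}(Av_{0}+B\xi_{0},P_{\delta}v_{0})_{\alpha}+\mathcal{F}(v_{0},\xi_{0})\geq\delta(|v_{0}|^{2}_{\alpha}+|\xi_{0}|^{2}_{\Xi})$; on the stationary set $i\omega v_{0}=Av_{0}+B\xi_{0}$ (where the hypothesis $Av_{0}+B\xi_{0}=i\omega v_{0}\in\mathcal{D}(A)$ holds automatically) the first term is $2\operatorname{Re}\left(i\omega(v_{0},P_{\delta}v_{0})_{\alpha}\right)=0$ by self-adjointness of $P_{\delta}$, giving $\alpha_{2}\geq\delta>0$ directly. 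You would need to replace your cutoff construction with this (or an equivalent) argument for the proof to close.
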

\begin{proof}
	The first part of item 1) follows from Lemma \ref{LEM: ParabOptimizationOnAffineSubspaces}. Let $T \geq 0$ and $v_{0} \in \mathbb{H}_{\alpha}$ be fixed and suppose we are given with a solution $v(t)=v(t,v_{0},\xi)$, where $t \in [0,T]$, to \eqref{EQ: ControlSystem} with $v(0)=v_{0}$ and $\xi(\cdot) \in L_{2}(0,T;\Xi)$ such that $v(\cdot) \in C([0,T];\mathbb{H}_{\alpha})$. Consider the process $(\widetilde{v}(\cdot),\widetilde{\xi}(\cdot)) \in \mathfrak{M}_{v_{0}}$, where
	\begin{equation}
	\widetilde{v}(t)= \begin{cases}
	v(t), &\text{ if } t \in [0,T],\\
	v^{0}(t-T,v(T)), &\text{ if } t > T.
	\end{cases}
	\end{equation}
	and
	\begin{equation}
	\widetilde{\xi}(t)= \begin{cases}
	\xi(t), &\text{ if } t \in [0,T],\\
	\xi^{0}(t-T,v(T)), &\text{ if } t > T.
	\end{cases}
	\end{equation}
	From the inequality
	\begin{equation}
	V(v_{0})=\mathcal{J}_{\mathcal{F}}(v^{0}(\cdot,v_{0}),\xi^{0}(\cdot,v_{0})) \leq \mathcal{J}_{\mathcal{F}}(\widetilde{v}(\cdot),\xi(\cdot))=V(v(T)) + \int_{0}^{T}\mathcal{F}(v(t),\xi(t))
	\end{equation}
	we have \eqref{EQ: OperatorSolutionsInequality} satisfied. Thus, the second part of item 1) is proved. Due to Lemma \ref{EQ: ParabAlphaEquivalenceLemma} to prove item 2) it is enough to show that $\alpha_{1}>0$ implies $\alpha_{2}>0$. Let us suppose that $\alpha_{1}>0$. Consider the form $\mathcal{F}_{\delta}(v,\xi):=\mathcal{F}(v,\xi)-\delta (|v|^{2}_{\alpha}+|\xi|_{\Xi}^{2})$. It is clear that for the form $\mathcal{F}_{\delta}$ the condition analogous to $\alpha_{1}>0$ will be satisfied if $\delta>0$ is chosen sufficiently small. Therefore, there exists an operator $P_{\delta} \in \mathcal{L}(\mathbb{H}_{\alpha})$, which is self-adjoint and such that for $h>0$ we have
	\begin{equation}
	\label{EQ: MonotoneIneqAlpha1ImpliesA2}
	( P_{\delta} v(h),v(h) )_{\alpha} - (  P_{\delta}v_{0},v_{0} ) + \int_{0}^{h} \mathcal{F}(v(s),\xi(s)) \geq \delta \int_{0}^{h}(|v(s)|^{2}_{\alpha}+|\xi(s)|_{\Xi}^{2})ds,
	\end{equation}
	where $v(\cdot)=v(\cdot,v_{0},\xi)$ and $\xi(\cdot) \in L_{2}(0,h;\Xi)$ such that $v(\cdot) \in C([0,h];\mathbb{H}_{\alpha})$. Let us choose $\xi(\cdot) \equiv \xi_{0}$ for a fixed $\xi_{0} \in \Xi$ and $v_{0} \in \mathcal{D}(A)$ such that $Av_{0} + B\xi_{0} \in \mathcal{D}(A)$. Then $v(\cdot) \in C([0,h];\mathcal{D}(A))$. From Lemma \ref{LEM: ConstantDifferentiationProperty} it follows that $v(\cdot)$ is $\mathcal{D}(A)$-differentiable at $t=0$ and since the embedding $\mathcal{D}(A) \subset \mathbb{H}_{\alpha}$ is continuous, it is also $\mathbb{H}_{\alpha}$-differentiable. Dividing \eqref{EQ: MonotoneIneqAlpha1ImpliesA2} by $h>0$ and letting $h$ tend to zero, we get
	\begin{equation}
	2\operatorname{Re}( Av_{0}+B\xi_{0}, P_{\delta}v_{0} )_{\alpha} + \mathcal{F}(v_{0},\xi_{0}) \geq \delta ( |v_{0}|^{2}_{\alpha}+|\xi_{0}|_{\Xi}^{2} ).
	\end{equation}
	If we choose $v_{0}$ and $\xi_{0}$ such that $i \omega v_{0} = A v_{0} + B \xi_{0}$ for some $\omega \in \mathbb{R}$, we immediately get
	\begin{equation}
	\mathcal{F}(v_{0},\xi_{0}) \geq \delta ( |v_{0}|^{2}_{\alpha}+|\xi_{0}|_{\Xi}^{2} ),
	\end{equation}
	which implies $\alpha_{2}>0$. Thus, the proof is finished.
\end{proof}
\section{Relaxing the controllability}
\label{SEC: RelaxingControllability}

Let us consider a modification of \eqref{EQ: ControlSystem} given by
\begin{equation}
\label{EQ: ModifiedControlSystem}
\dot{v}(t) = Av(t) + B\xi(t) + \eta(t) = Av(t) + \widetilde{B}\zeta(t),
\end{equation}
where $\zeta(t)=(\xi(t),\eta(t))$ is a new control (and $\widetilde{B}\zeta(t) = B\xi(t) + \zeta(t)$) with values from the extended control space $\Xi \times \mathbb{H}$.

Suppose that $A$ is also the generator of a $C_{0}$-semigroup in $\mathbb{H}_{\alpha}$. Then it is clear that the pair $(A,\widetilde{B})$ satisfies \textbf{(CONT)} no matter what $B$ is. Indeed, there exists $\nu_{0} \in \mathbb{R}$ and $M>0$ such that 
\begin{equation}
\|G(t)\|_{\mathbb{H}_{\alpha} \to \mathbb{H}_{\alpha}} \leq M e^{\nu_{0} t}.
\end{equation}
The only interesting case is $\nu_{0} \geq 0$. Let us fix any $\nu > \nu_{0}$. Now for $v_{0} \in \mathbb{H}_{\alpha}$ consider the mild solution $w(\cdot)$ such that $w(0)=v_{0}$ and
\begin{equation}
\dot{w}(t) = (A - \nu I)w(t).
\end{equation} 
Clearly, $w(\cdot) \in L_{2}(0,+\infty;\mathbb{H}_{\alpha})$ since $w(t)=e^{-\nu t}G(t)v_{0}$. Putting $\xi(\cdot) \equiv 0$ and $\eta(\cdot):= -\nu w(\cdot)$, we get that $w(\cdot)$ satisfies \eqref{EQ: ModifiedControlSystem} and, consequently, \textbf{(CONT)} holds for the pair $(A,\widetilde{B})$.

For $\gamma > 0$ we consider the quadratic form
\begin{equation}
\mathcal{F}_{\gamma}(v,\xi,\eta) := \mathcal{F}(v,\xi) + \gamma |\eta|^{2}_{\mathbb{H}}.
\end{equation}
We also put
\begin{equation}
\alpha_{\gamma} := \inf \frac{ \mathcal{F}_{\gamma}(v,\xi,\eta) }{|v|^{2}_{\mathbb{H}}+|\xi|^{2}_{\Xi} + |\eta|^{2}_{\mathbb{H}}},
\end{equation}
where the infimum is taken over all $\omega \in \mathbb{R}$, $v \in \mathcal{D}(A)$, $\xi \in \Xi$ and $\eta \in \mathbb{H}$ such that $i \omega v = A v + B \xi + \eta$. So, the inequality $\alpha_{\gamma}>0$ allows us to apply Theorem \ref{TH: OptimalProcessParab} to the pair $(A,\widetilde{B})$ and the form $\mathcal{F}_{\gamma}$. Now we can prove the following theorem (and Theorem \ref{TH: ParabComplexFreqTheorem} in particular), which relaxes the controllability assumption.
\begin{theorem}
	\label{TH: FreqThRelaxedDelay}
	Let $A$ be also the generator of a $C_{0}$-semigroup in $\mathbb{H}_{\alpha}$ and let \textbf{(RES)} be satisfied. Then the following conditions for the pair $(A,B)$ and the form $\mathcal{F}$ are equivalent:
	
	1. $\alpha_{2}>0$
	
	2. There exists $P \in \mathcal{L}(\mathbb{H}_{\alpha})$, self-adjoint in $\mathbb{H}_{\alpha}$ and such that for $V(v):=( Pv, v )_{\alpha}$ and any $T \geq 0$ we have
	\begin{equation}
	\label{EQ: FreqThParabRelaxingOperatorIneq}
	V(v(T)) - V(v_{0}) + \int_{0}^{T} \mathcal{F}(v(t),\xi(t))dt \geq 0,
	\end{equation}
	where $v(t)=v(t,v_{0},\xi)$ is the solution to \eqref{EQ: ControlSystem} with arbitrary $v(0)=v_{0} \in \mathbb{H}_{\alpha}$ and $\xi(\cdot) \in L_{2}(0,T;\Xi)$ such that $v(\cdot) \in C([0,T];\mathbb{H}_{\alpha})$.
	
	Moreover, $\alpha_{2}>0$ is equivalent to $\alpha_{3}>0$.
\end{theorem}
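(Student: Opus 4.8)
The plan is to deduce the theorem from Theorem~\ref{TH: OptimalProcessParab} and Lemma~\ref{EQ: ParabAlphaEquivalenceLemma} via the controllable extension $(A,\widetilde{B})$ constructed just above, which satisfies \textbf{(CONT)} no matter what $B$ is. I would first dispose of the ``moreover'': under \textbf{(RES)} the resolvent $(A-i\omega I)^{-1}$ is uniformly bounded in $\mathcal{L}(\mathbb{H};\mathbb{H}_{\alpha})$, hence, since $B\in\mathcal{L}(\Xi;\mathbb{H})$, the operators $(A-i\omega I)^{-1}B$ are uniformly bounded in $\mathcal{L}(\Xi;\mathbb{H}_{\alpha})$; so $\alpha_{3}$ is well-defined and Lemma~\ref{EQ: ParabAlphaEquivalenceLemma}(1) yields $\alpha_{2}>0\iff\alpha_{3}>0$.

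For the direction $1\Rightarrow2$, assume $\alpha_{2}>0$ (equivalently $\alpha_{3}>0$). I would pass to the extended control system \eqref{EQ: ModifiedControlSystem} with control space $\Xi\times\mathbb{H}$ and to the form $\mathcal{F}_{\gamma}(v,\xi,\eta)=\mathcal{F}(v,\xi)+\gamma|\eta|^{2}_{\mathbb{H}}$; the pair $(A,\widetilde{B})$ satisfies \textbf{(CONT)} by the discussion preceding the theorem. The crucial step is to check that $\alpha_{\gamma}>0$ for $\gamma$ large enough. Given a triple with $i\omega v=Av+B\xi+\eta$, I would write $v=-(A-i\omega I)^{-1}(B\xi+\eta)=v_{\xi}+v_{\eta}$, where $v_{\xi}=-(A-i\omega I)^{-1}B\xi$; by \textbf{(RES)} one has $|v_{\eta}|_{\alpha}\le C|\eta|_{\mathbb{H}}$ and $|v|_{\alpha}\le C'(|\xi|_{\Xi}+|\eta|_{\mathbb{H}})$ uniformly in $\omega$, while $\mathcal{F}(v_{\xi},\xi)\ge\alpha_{3}|\xi|^{2}_{\Xi}$. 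Expanding $\mathcal{F}(v_{\xi}+v_{\eta},\xi)$ and estimating the $v_{\eta}$-contributions by Cauchy--Schwarz and Young's inequality, the cross terms with $\xi$ are absorbed into $\tfrac{1}{2}\alpha_{3}|\xi|^{2}_{\Xi}$ at the price of a summand $C''|\eta|^{2}_{\mathbb{H}}$, so $\mathcal{F}_{\gamma}(v,\xi,\eta)\ge\tfrac{1}{2}\alpha_{3}|\xi|^{2}_{\Xi}+(\gamma-C'')|\eta|^{2}_{\mathbb{H}}$; combined with the bound on $|v|_{\alpha}$ this gives $\mathcal{F}_{\gamma}\ge\delta(|v|^{2}_{\alpha}+|\xi|^{2}_{\Xi}+|\eta|^{2}_{\mathbb{H}})$ once $\gamma>C''$, i.e.\ $\alpha_{\gamma}>0$, whence also the corresponding $\alpha_{1}>0$ for $(A,\widetilde{B})$ and $\mathcal{F}_{\gamma}$ by item 2 of Theorem~\ref{TH: OptimalProcessParab}. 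Then Theorem~\ref{TH: OptimalProcessParab} applied to $(A,\widetilde{B})$ and $\mathcal{F}_{\gamma}$ produces $P:=P_{\gamma}\in\mathcal{L}(\mathbb{H}_{\alpha})$, self-adjoint in $\mathbb{H}_{\alpha}$, with $V(v(T))-V(v_{0})+\int_{0}^{T}\mathcal{F}_{\gamma}(v(t),\xi(t),\eta(t))\,dt\ge0$ for every mild solution of \eqref{EQ: ModifiedControlSystem} with $v(\cdot)\in C([0,T];\mathbb{H}_{\alpha})$. Specializing to $\eta\equiv0$ turns \eqref{EQ: ModifiedControlSystem} into \eqref{EQ: ControlSystem} and $\mathcal{F}_{\gamma}(v,\xi,0)$ into $\mathcal{F}(v,\xi)$, which is exactly \eqref{EQ: FreqThParabRelaxingOperatorIneq}. (Replacing $\mathcal{F}$ by $\mathcal{F}-\delta'(|v|^{2}_{\alpha}+|\xi|^{2}_{\Xi})$ with $\delta'>0$ small, which still has $\alpha_{2}>0$, and repeating the argument sharpens \eqref{EQ: FreqThParabRelaxingOperatorIneq} to the strict dissipation estimate of Theorem~\ref{TH: ParabComplexFreqTheorem}.)

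For $2\Rightarrow1$, I would differentiate as in the proof of Theorem~\ref{TH: OptimalProcessParab}. Fix $\omega\in\mathbb{R}$, $v_{0}\in\mathcal{D}(A)$ and $\xi_{0}\in\Xi$ with $i\omega v_{0}=Av_{0}+B\xi_{0}$; then $Av_{0}+B\xi_{0}=i\omega v_{0}\in\mathcal{D}(A)$ automatically, so with $f\equiv B\xi_{0}$ Lemma~\ref{LEM: ConstantDifferentiationProperty} shows that the solution $v(\cdot)$ with $v(0)=v_{0}$ and $\xi\equiv\xi_{0}$ is $\mathbb{H}_{\alpha}$-differentiable at $t=0$ with $v'(0)=i\omega v_{0}$. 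Taking $T=h$ in \eqref{EQ: FreqThParabRelaxingOperatorIneq}, dividing by $h>0$ and letting $h\to0+$, I obtain $2\operatorname{Re}(Pv_{0},i\omega v_{0})_{\alpha}+\mathcal{F}(v_{0},\xi_{0})\ge0$. Since $P$ is self-adjoint in $\mathbb{H}_{\alpha}$, the number $(Pv_{0},v_{0})_{\alpha}$ is real, so $(Pv_{0},i\omega v_{0})_{\alpha}=-i\omega(Pv_{0},v_{0})_{\alpha}$ is purely imaginary and the first summand vanishes; thus $\mathcal{F}(v_{0},\xi_{0})\ge0$ over the constraint manifold, which is $\alpha_{2}\ge0$, and $\alpha_{2}>0$ follows once \eqref{EQ: FreqThParabRelaxingOperatorIneq} is used in the strict form obtained above.

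The hard part will be the verification $\alpha_{\gamma}>0$ for large $\gamma$. This is exactly where \textbf{(RES)} is indispensable --- not merely invertibility of $A-i\omega I$ on $\mathbb{H}$, but the \emph{uniform} boundedness of the resolvent as an operator into $\mathbb{H}_{\alpha}$ --- since it is what lets one control $|v_{\eta}|_{\alpha}$ uniformly in $\omega$ and thereby absorb the auxiliary component into $\gamma|\eta|^{2}_{\mathbb{H}}$. The remaining ingredients are either direct appeals to Theorem~\ref{TH: OptimalProcessParab} and Lemma~\ref{EQ: ParabAlphaEquivalenceLemma}, or the differentiation argument already used in the controllable case.
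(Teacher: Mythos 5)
Your proposal follows the paper's proof essentially verbatim: the same controllable extension $(A,\widetilde{B})$ with the form $\mathcal{F}_{\gamma}$, the same resolvent splitting $v=v_{\xi}+v_{\eta}$ (the paper's $\delta v + v_{\omega}(\eta)$ with $v_{\omega}(\eta)=(i\omega I-A)^{-1}\eta$, $\xi_{\omega}(\eta)=0$) together with a Young-type absorption of the cross terms to get $\alpha_{\gamma}>0$, then an appeal to Theorem~\ref{TH: OptimalProcessParab} and the specialization $\eta\equiv 0$, and the same differentiation argument via Lemma~\ref{LEM: ConstantDifferentiationProperty} for the converse. The one soft spot you flag yourself --- that the non-strict inequality \eqref{EQ: FreqThParabRelaxingOperatorIneq} alone yields only $\alpha_{2}\geq 0$, strictness coming from first perturbing $\mathcal{F}$ by $-\delta'(|v|^{2}_{\alpha}+|\xi|^{2}_{\Xi})$ --- is handled no more explicitly in the paper, which simply cites the necessity argument from the proof of Theorem~\ref{TH: OptimalProcessParab}.
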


\begin{proof}
	Our aim is to show that $\alpha_{2}>0$ (given by \eqref{EQ: Alpha2} for the form $\mathcal{F}$) implies that $\alpha_{\gamma} > 0$ for a sufficiently large $\gamma>0$ and therefore we can apply Theorem \ref{TH: OptimalProcessParab} to the pair $(A,\widetilde{B})$ and the form $\mathcal{F}_{\gamma}$ to get an operator $P_{\gamma} \in \mathcal{L}(\mathbb{H}_{\alpha})$ such that for $V(v):= (P_{\gamma}v, v )_{\alpha}$ the inequality 
	\begin{equation}
	V(v(T)) - V(v_{0}) + \int_{0}^{T} \mathcal{F}_{\gamma}(v(t),\xi(t),\eta(t))dt \geq 0
	\end{equation}
	is satisfied for any solution $v(\cdot)=v(\cdot,v_{0},(\xi(\cdot),\eta(\cdot)))$ of \eqref{EQ: ModifiedControlSystem}, where $\xi(\cdot) \in L_{2}(0,T;\Xi)$, $\eta(\cdot) \in L_{2}(0,T;\mathbb{H})$ and such that $v(\cdot) \in C([0,T];\mathbb{H}_{\alpha})$. Putting $\eta(\cdot) \equiv 0$, we get inequality \eqref{EQ: FreqThParabRelaxingOperatorIneq} for the form $\mathcal{F}$ and the pair $(A,B)$. Thus $P:=P_{\gamma}$ satisfies the required properties.
	
	The necessity of $\alpha_{2}>0$ for the existence of $P$ was shown in the proof of Theorem \ref{TH: OptimalProcessParab}. Let us suppose that $\alpha_{2}>0$. Then there exists $\varepsilon>0$ such that the inequality
	\begin{equation}
	\label{EQ: RelaxationStrongerIneq}
	\mathcal{F}(v,\xi) \geq \varepsilon ( |v|^{2}_{\alpha} + |\xi|^{2}_{\Xi} )
	\end{equation}
	is satisfied for all $v = (i\omega I - A)^{-1}B\xi$, where $\omega \in \mathbb{R}$ and $\xi \in \Xi$ are arbitrary. Let us for every $\eta \in \mathbb{H}$ and $\omega \in \mathbb{R}$ consider vectors $v_{\omega}(\eta) \in \mathcal{D}(A)$ and $\xi_{\omega}(\eta) \in \Xi$ such that we have
	\begin{equation}
	i \omega v_{\omega}(\eta) = A v_{\omega}(\eta) + B \xi_{\omega}(\eta) + \eta.
	\end{equation}
	and there are constants $M_{1}>0, M_{2}>0$ such that
	\begin{equation}
	\label{EQ: RelaxationFunctionsConstants}
	\| v_{\omega}(\eta) \|_{\alpha} \leq M_{1} |\eta|_{\mathbb{H}} \text{ and } |\xi_{\omega}(\eta)|_{\Xi} \leq M_{2} |\eta|_{\mathbb{H}}.
	\end{equation}
	Since \textbf{(RES)} is satisfied, we can take, for example, $v_{\omega}(\eta):= (i\omega I - A)^{-1}\eta$ and $\xi_{\omega}(\eta) = 0$. Now suppose $\omega \in \mathbb{R}$, $v \in \mathcal{D}(A)$, $\xi \in \Xi$ and $\eta \in \mathbb{H}$ are given such that
	\begin{equation}
	i \omega v = A v + B \xi + \eta.
	\end{equation}
	Put $\delta v := v - v_{\omega}(\eta)$, $\delta \xi := \xi - \xi_{\omega}(\omega)$. Then we have
	\begin{equation}
	\label{EQ: RelaxationDeltaFreq}
	i\omega \delta v = A \delta v + B \delta \xi
	\end{equation}
	and, consequently,
	\begin{equation}
	\mathcal{F}_{\gamma}(v,\xi,\eta) = \mathcal{F}(v_{\omega}(\eta),\xi_{\omega}(\eta)) + L(\delta v,\delta \xi; v_{\omega}(\eta),\xi_{\omega}(\eta)) + \mathcal{F}(\delta v,\delta \xi) + \gamma |\eta|^{2}_{\mathbb{H}}.
	\end{equation}
	From \eqref{EQ: RelaxationDeltaFreq} and \eqref{EQ: RelaxationStrongerIneq} we have that $\mathcal{F}(\delta v,\delta \xi) \geq \varepsilon ( |\delta v|^{2}_{\alpha} + |\delta\xi|^{2}_{\Xi} )$. Due to \eqref{EQ: RelaxationFunctionsConstants} there are constants $\widetilde{M}_{1}>0$, $\widetilde{M}_{2}>0$ such that
	\begin{equation}
	\label{EQ: RelaxingFormConstants}
	|\mathcal{F}(v_{\omega}(\eta),\xi_{\omega}(\eta))| \leq \widetilde{M}_{1}|\eta|^{2}_{\mathbb{H}} \text{ and } |L(\delta v, \delta \xi; v_{\omega}(\eta),\xi_{\omega}(\eta))| \leq \widetilde{M}_{2} |\delta(\xi)|_{\Xi} \cdot |\eta|_{\mathbb{H}}.
	\end{equation}
	Let $\gamma_{1} := \widetilde{M}_{1}$ and $\gamma_{2} > 0$ be such that $\gamma_{2} x^{2} - \widetilde{M}_{2} \alpha x + \alpha^2 \varepsilon/2 \geq 0$ for all $x \in \mathbb{R}$ and all $\alpha \geq 0$. The latter will be satisfied if $2 \gamma_{2} \varepsilon \geq \widetilde{M}^{2}_{2}$. Put $\gamma := \gamma_{1} + \gamma_{2} + \gamma_{3}$ for any $\gamma_{3}>0$. From this and \eqref{EQ: RelaxingFormConstants} we have
	\begin{equation}
	\mathcal{F}_{\gamma}(v,\xi,\eta) \geq \frac{1}{2}\varepsilon ( |\delta v|^{2}_{\alpha} + |\delta\xi|^{2}_{\Xi} ) + \gamma_{3} |\eta|^{2}_{\mathbb{H}}.
	\end{equation}
	Since $|v|_{\alpha} \leq M_{1}|\eta|_{\mathbb{H}} + |\delta v|_{\alpha}$ and $|\xi|_{\Xi} \leq M_{2} |\eta|_{\mathbb{H}} + |\delta \xi|_{\Xi}$, this implies that $\alpha_{\gamma} > 0$. The proof is finished.
\end{proof}
\section{Realification of the operator $P$ and its sign properties}
\label{SEC: Realification}
In applications, we usually encounter control systems posed in the context of real spaces $\mathbb{H}$, $\mathbb{H}_{\alpha}$ and $\Xi$ with the real operators $A$ and $B$. Suppose a real quadratic form $\mathcal{F}(v,\xi)$ of $v \in \mathbb{H}_{\alpha}$ and $\xi \in \Xi$ is given.  Let us consider its Hermitian extension to the complexifications $\mathbb{H}^{\mathbb{C}}$, $\mathbb{H}^{\mathbb{C}}_{\alpha}$ and $\Xi^{\mathbb{C}}$ given by $\mathcal{F}^{\mathbb{C}}(v_{1}+iv_{2}, \xi_{1}+i\xi_{2}):=\mathcal{F}(v_{1},\xi_{2}) + \mathcal{F}(v_{2},\xi_{2})$ for $v_{1},v_{2} \in \mathbb{H}_{\alpha}$ and $\xi_{1},\xi_{2} \in \Xi$. Suppose $\mathcal{F}$ has the form
\begin{equation}
	\label{EQ: RealQuadraticFormParab}
	\mathcal{F}(v,\xi) = (\mathcal{F}_{1}v,v)_{\mathbb{H}_{\alpha}} + 2(\mathcal{F}_{2}v,\xi)_{\Xi} + (\mathcal{F}_{3}\xi,\xi)_{\Xi} \text{ for } v \in \mathbb{H}_{\alpha} \text{ and } \xi \in \Xi,
\end{equation}
where $\mathcal{F}^{*}_{1}=\mathcal{F}_{1} \in \mathcal{L}(\mathbb{H}_{\alpha})$, $\mathcal{F}_{2} \in \mathcal{L}(\mathbb{H}_{\alpha};\Xi)$ and $\mathcal{F}^{*}_{3}=\mathcal{F}_{3} \in \mathcal{L}(\Xi)$. Then it is easy to check that $\mathcal{F}^{\mathbb{C}}$ is given by
\begin{equation}
	\mathcal{F}^{\mathbb{C}}(v,\xi)=(\mathcal{F}_{1}^{\mathbb{C}}v,v)_{\mathbb{H}^{\mathbb{C}}_{\alpha}} + 2\operatorname{Re}(\mathcal{F}^{\mathbb{C}}_{2}v,\xi)_{\Xi^{\mathbb{C}}} + (\mathcal{F}^{\mathbb{C}}_{3}\xi,\xi)_{ \Xi^{\mathbb{C}} } \text{ for } v \in \mathbb{H}^{\mathbb{C}}_{\alpha} \text{ and } \xi \in \Xi^{\mathbb{C}},
\end{equation}
where $\mathcal{F}^{\mathbb{C}}_{1}, \mathcal{F}^{\mathbb{C}}_{2}, \mathcal{F}^{\mathbb{C}}_{3}$ are complexifications of the operators $\mathcal{F}_{1},\mathcal{F}_{2},\mathcal{F}_{3}$ respectively.

For the real context we have the following version of Theorem \ref{TH: ParabComplexFreqTheorem}.
\begin{theorem}
	\label{TH: ParabRealFreqTheorem}
	Suppose $A$ is the generator of a $C_{0}$ semigroup in $\mathbb{H}_{\alpha}$ also and let \textbf{(RES)} hold. The following conditions for the pair $(A,B)$ and the form $\mathcal{F}$ are equivalent:
	
	1. For some $\delta'>0$ we have $\mathcal{F}^{\mathbb{C}}(-(A^{\mathbb{C}} - i\omega I)B^{\mathbb{C}}\xi,\xi) \leq -\delta' |\xi|^{2}_{\Xi^{\mathbb{C}}}$ for all $\xi \in \Xi^{\mathbb{C}}$ and $\omega \in \mathbb{R}$.
	
	2. There are $\delta>0$ and $P \in \mathcal{L}(\mathbb{H}_{\alpha})$, which is self-adjoint in $\mathbb{H}_{\alpha}$ and such that for $V(v):=( Pv, v )_{\alpha}$ and any $T \geq 0$ we have
	\begin{equation}
		\label{EQ: RealFreqThInequality}
		V(v(T)) - V(v_{0}) + \int_{0}^{T} \mathcal{F}(v(s),\xi(s))ds \leq -\delta \int_{0}^{T}(|v(s)|^{2}_{\mathbb{H}_\alpha}+|\xi(s)|^{2}_{\Xi})ds,
	\end{equation}
	where $v(t)=v(t,v_{0},\xi)$ is the solution to \eqref{EQ: ControlSystem} with arbitrary $v(0)=v_{0} \in \mathbb{H}_{\alpha}$ and $\xi(\cdot) \in L_{2}(0,T;\Xi)$ such that $v(\cdot) \in C([0,T];\mathbb{H}_{\alpha})$.
\end{theorem}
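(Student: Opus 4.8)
The real Theorem \ref{TH: ParabRealFreqTheorem} will be deduced from the complex version, Theorem \ref{TH: ParabComplexFreqTheorem} (equivalently Theorem \ref{TH: FreqThRelaxedDelay}), applied to the complexified data $(A^{\mathbb{C}}, B^{\mathbb{C}})$ and the Hermitian extension $\mathcal{F}^{\mathbb{C}}$ on $\mathbb{H}^{\mathbb{C}}_{\alpha}$, $\mathbb{H}^{\mathbb{C}}$, $\Xi^{\mathbb{C}}$. First I would record the routine facts that realification/complexification behaves well with all the structures in play: $A^{\mathbb{C}}$ generates the $C_{0}$-semigroup $G^{\mathbb{C}}(t)=G(t)^{\mathbb{C}}$ on both $\mathbb{H}^{\mathbb{C}}$ and $\mathbb{H}^{\mathbb{C}}_{\alpha}$; the resolvent satisfies $(A^{\mathbb{C}}-i\omega I)^{-1}=((A-i\omega I)^{-1})^{\mathbb{C}}$ with the same operator norm in $\mathcal{L}(\mathbb{H}^{\mathbb{C}};\mathbb{H}^{\mathbb{C}}_{\alpha})$, so that \textbf{(RES)} for $A$ is exactly \textbf{(RES)} for $A^{\mathbb{C}}$; and, as noted in the excerpt, $\mathcal{F}^{\mathbb{C}}$ has precisely the Hermitian form \eqref{EQ: QuadraticFormComplex} with self-adjoint/bounded blocks $\mathcal{F}^{\mathbb{C}}_{i}$. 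Hence all hypotheses of Theorem \ref{TH: ParabComplexFreqTheorem} are available for the complexified triple.

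Next, item 1 of Theorem \ref{TH: ParabRealFreqTheorem} is \emph{verbatim} item 1 of Theorem \ref{TH: ParabComplexFreqTheorem} written for $(A^{\mathbb{C}}, B^{\mathbb{C}}, \mathcal{F}^{\mathbb{C}})$, so Theorem \ref{TH: ParabComplexFreqTheorem} yields $\delta>0$ and a self-adjoint $P^{\mathbb{C}}\in\mathcal{L}(\mathbb{H}^{\mathbb{C}}_{\alpha})$ satisfying the complex operator inequality. It remains to produce a \emph{real} self-adjoint operator. The key observation is uniqueness: by Theorem \ref{TH: OptimalProcessParab}, $P^{\mathbb{C}}$ is the operator associated with the optimal value of $\mathcal{J}_{\mathcal{F}_{\gamma}^{\mathbb{C}}}$ (via the relaxed system), i.e. it is canonically determined by the data. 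Since $A^{\mathbb{C}}$, $B^{\mathbb{C}}$, $\mathcal{F}^{\mathbb{C}}$ all commute with the canonical conjugation $J(v_{1}+iv_{2})=v_{1}-iv_{2}$, so does the whole optimization problem, whence $JP^{\mathbb{C}}J=P^{\mathbb{C}}$. Therefore $P^{\mathbb{C}}$ is the complexification of a (necessarily self-adjoint, since $P^{\mathbb{C}}$ is) operator $P\in\mathcal{L}(\mathbb{H}_{\alpha})$: one sets $Pv:=\operatorname{Re}(P^{\mathbb{C}}v)$ for $v\in\mathbb{H}_{\alpha}\subset\mathbb{H}^{\mathbb{C}}_{\alpha}$, and $J$-invariance gives $P^{\mathbb{C}}=P^{\mathbb{C}}$ as claimed. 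Alternatively, and more robustly, I would avoid appealing to uniqueness of $P^{\mathbb{C}}$ and instead simply \emph{symmetrize}: replace $P^{\mathbb{C}}$ by $\tfrac12(P^{\mathbb{C}}+JP^{\mathbb{C}}J)$, which is again self-adjoint, $J$-invariant, and still satisfies the operator inequality because every real process $(v(\cdot),\xi(\cdot))$ is $J$-fixed and the inequality is stable under averaging two solutions of it; a $J$-invariant bounded self-adjoint operator on $\mathbb{H}^{\mathbb{C}}_{\alpha}$ is exactly the complexification of a bounded self-adjoint $P$ on $\mathbb{H}_{\alpha}$.

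Finally I would descend the operator inequality from the complex to the real setting. Given any real $v_{0}\in\mathbb{H}_{\alpha}$ and $\xi(\cdot)\in L_{2}(0,T;\Xi)$ with $v(\cdot)\in C([0,T];\mathbb{H}_{\alpha})$, the same $v(\cdot)$ viewed in $\mathbb{H}^{\mathbb{C}}_{\alpha}$ is the mild solution of the complexified control system with real data, and $V^{\mathbb{C}}(v)=(P^{\mathbb{C}}v,v)_{\mathbb{H}^{\mathbb{C}}_{\alpha}}=(Pv,v)_{\mathbb{H}_{\alpha}}=V(v)$, $\mathcal{F}^{\mathbb{C}}(v(s),\xi(s))=\mathcal{F}(v(s),\xi(s))$, and $|v(s)|_{\mathbb{H}^{\mathbb{C}}_{\alpha}}=|v(s)|_{\mathbb{H}_{\alpha}}$ etc. on real elements; hence \eqref{EQ: RealFreqThInequality} is literally \eqref{EQ: RealFreqThInequality} evaluated inside $\mathbb{H}^{\mathbb{C}}$. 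The converse implication ($2\Rightarrow1$) follows the same route as in Theorem \ref{TH: OptimalProcessParab}/\ref{TH: FreqThRelaxedDelay}: from the real operator inequality one passes (via Lemma \ref{LEM: ConstantDifferentiationProperty} with constant $\xi_{0}$ and $v_{0}\in\mathcal{D}(A)$ with $Av_{0}+B\xi_{0}\in\mathcal{D}(A)$) to the infinitesimal inequality $2\operatorname{Re}(Av_{0}+B\xi_{0},Pv_{0})_{\alpha}+\mathcal{F}(v_{0},\xi_{0})\le-\delta(|v_{0}|^{2}_{\alpha}+|\xi_{0}|^{2}_{\Xi})$, complexify it (both sides being real-analytic in the real/imaginary parts), restrict to the frequency manifold $i\omega v=A^{\mathbb{C}}v+B^{\mathbb{C}}\xi$, and read off $\alpha_{2}>0$, which is equivalent to $\alpha_{3}>0$ (i.e. item 1) by Lemma \ref{EQ: ParabAlphaEquivalenceLemma}. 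The only mildly delicate point — and thus the main obstacle — is the realification bookkeeping around the relaxed system of Section \ref{SEC: RelaxingControllability}: one must check that the extra control $\eta$ and the penalization $\gamma|\eta|^{2}$ also respect $J$, so that the operator $P_{\gamma}^{\mathbb{C}}$ produced by Theorem \ref{TH: FreqThRelaxedDelay} is $J$-invariant before one sets $\eta\equiv0$; this is immediate since $\widetilde{B}^{\mathbb{C}}$ and $\mathcal{F}_{\gamma}^{\mathbb{C}}$ are themselves complexifications of real objects.
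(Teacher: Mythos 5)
Your proposal is correct and follows essentially the same route as the paper: apply the complex Frequency Theorem to the complexified data $(A^{\mathbb{C}},B^{\mathbb{C}},\mathcal{F}^{\mathbb{C}})$, restrict the resulting operator inequality to real processes to extract a real self-adjoint $P$ (the paper does this via the block decomposition of $\widetilde{P}$ and takes $P=\widetilde{P}_{11}$, which is the same operator your conjugation-symmetrization $\tfrac12(P^{\mathbb{C}}+JP^{\mathbb{C}}J)$ produces on the real subspace), and prove the converse by the differentiation argument of Lemma \ref{LEM: ConstantDifferentiationProperty} exactly as in Theorem \ref{TH: OptimalProcessParab}. Your symmetrization variant is the right choice over the uniqueness argument, since the relaxed theorem does not assert uniqueness of $P$; this is only a cosmetic difference from the paper's block-matrix bookkeeping.
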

\begin{proof}
	Let us show that item $1$ implies item $2$. For this we apply Theorem \ref{TH: ParabComplexFreqTheorem} to the operators $A^{\mathbb{C}}$ and $B^{\mathbb{C}}$, spaces $\mathbb{H}^{\mathbb{C}}$, $\Xi^{\mathbb{C}}$, $\mathbb{H}_{\alpha}^{\mathbb{C}}$ and the form $\mathcal{F}_{\delta}(v,\xi):=-\mathcal{F}^{\mathbb{C}}(v,\xi) + \delta (|v|^{2}_{\mathbb{H}^{\mathbb{C}}_{\alpha}} + |\xi|^{2}_{\Xi^{\mathbb{C}}})$ for a sufficiently small $\delta > 0$ such that $\alpha_{2}(\mathcal{F}_{\delta},A^{\mathbb{C}},B^{\mathbb{C}}) > 0$. Thus, there exists $-\widetilde{P} \in \mathcal{L}(\mathbb{H}^{\mathbb{C}}_{\alpha})$, self-adjoint in $\mathbb{H}^{\mathbb{C}}_{\alpha}$ and such that
	\begin{equation}
	\label{EQ: ComplexInequality}
	\begin{split}
	\left( \widetilde{P}(v_{1}(T)+ iv_{2}(T)), v_{1}(T)+ iv_{2}(T) \right)_{\mathbb{H}_{\alpha}} - \left(\widetilde{P}(v_{0,1}+ iv_{0,2}), v_{0,1}+ iv_{0,2} \right)_{\mathbb{H}_{\alpha}} +\\+ \int_{0}^{T} \mathcal{F}(v_{1}(t),\xi_{1}(t))dt + \int_{0}^{T}\mathcal{F}(v_{2}(t),\xi_{2}(t))dt \leq -\delta \int_{0}^{T}(|v(s)|^{2}_{\mathbb{H}^{\mathbb{C}}_{\alpha}}+|\xi(s)|^{2}_{\Xi^{\mathbb{C}}})ds,
	\end{split}	
	\end{equation}
	where $v_{1}(t)=v(t,v_{0,1},\xi_{1})$ and $v_{2}(t)=v(t,v_{0,2},\xi_{2})$ are arbitrary solutions in the real spaces. Note that $\widetilde{P}$ can be represented as
	\begin{equation}
	\widetilde{P} = \begin{bmatrix}
	\widetilde{P}_{11} \ \widetilde{P}_{12}\\
	\widetilde{P}_{21} \ \widetilde{P}_{22}
	\end{bmatrix},
	\end{equation}
	where $\widetilde{P}_{ij} \in \mathcal{L}(\mathbb{H}_{\alpha})$ for $i,j \in \{ 1,2 \}$ and, moreover, $( v, \widetilde{P}_{12}v )_{\mathbb{H}_{\alpha}} = ( v, \widetilde{P}_{21}v )_{\mathbb{H}_{\alpha}} = 0$ for all $v \in \mathbb{H}_{\alpha}$. Putting $v_{2}(\cdot) \equiv 0$ and $\xi_{2}(\cdot) \equiv 0$ in \eqref{EQ: ComplexInequality}, we get \eqref{EQ: RealFreqThInequality} for $P := \widetilde{P}_{11}$.
	
	The inverse implication can be shown analogously to the corresponding part of the proof in Theorem \ref{TH: OptimalProcessParab}, where Lemma \ref{LEM: ConstantDifferentiationProperty} is used.
\end{proof}

We say that a self-adjoint operator $P \in \mathcal{L}(\mathbb{H}_{\alpha})$ is \textit{positive} (resp. \textit{negative}) on $\mathbb{L} \subset \mathbb{H}_{\alpha}$ if $(Pv,v)_{\mathbb{H}_{\alpha}} > 0$ (resp. $(Pv,v)_{\mathbb{H}_{\alpha}} < 0$) for all non-zero $v \in \mathbb{L}$.

Now suppose that there exists a decomposition of $\mathbb{H}_{\alpha}$ into the direct sum of two subspaces $\mathbb{H}^{s}_{\alpha}$ and $\mathbb{H}^{u}_{\alpha}$, i.~e. $\mathbb{H}_{\alpha} = \mathbb{H}^{s}_{\alpha} \oplus \mathbb{H}^{u}_{\alpha}$, such that for $v_{0} \in \mathbb{H}^{s}_{\alpha}$ we have $G(t)v_{0} \to 0$ in $\mathbb{H}_{\alpha}$ as $t \to +\infty$ and any $v_{0} \in \mathbb{H}^{u}_{\alpha}$ admits a backward extension\footnote{That is a continuous function $v \colon \mathbb{R} \to \mathbb{H}_{\alpha}$ such that $v(0)=v_{0}$ and $v(t+s)=G(t)v(s)$ for all $t \geq 0$ and $s \in \mathbb{R}$.} $v(\cdot)$ such that $v(t) \to 0$ in $\mathbb{H}_{\alpha}$ as $t \to -\infty$.

\begin{theorem}
	\label{TH: HighRankConesTheorem}
	Assume that there is a decomposition $\mathbb{H}_{\alpha}=\mathbb{H}^{s}_{\alpha} \oplus \mathbb{H}^{u}_{\alpha}$ as above and $\operatorname{dim}\mathbb{H}^{u}_{\alpha} =: j < \infty$. Suppose that for an operator $P \in \mathcal{L}(\mathbb{H}_{\alpha})$, self-adjoint in $\mathbb{H}_{\alpha}$, and the quadratic form $V(v):=(Pv,v)_{\mathbb{H}_{\alpha}}$ we have the Lyapunov inequality 
	\begin{equation}
	\label{EQ: LyapunovInequalityIntegralForm}
	V(v(T)) - V(v(0)) \leq -\delta \int_{0}^{T}|v(t)|^{2}_{\mathbb{H}_{\alpha}}dt
	\end{equation}
	satisfied for all $T>0$, $v_{0} \in \mathbb{H}_{\alpha}$ and $v(t)=G(t)v_{0}$.
	
	Then $P$ is positive on $\mathbb{H}^{s}_{\alpha}$ and negative on $\mathbb{H}^{u}_{\alpha}$. Moreover, the set $\mathcal{C}_{V}:=\{ v \in \mathbb{H}_{\alpha} \ | \ V(v) \leq 0 \}$ is a $j$-dimensional cone in $\mathbb{H}_{\alpha}$ in the sense that
	\begin{enumerate}
		\item[1)] $\mathcal{C}_{V}$ is closed in $\mathbb{H}_{\alpha}$;
		\item[2)] $\alpha v \in \mathcal{C}_{V}$ for all $v \in \mathcal{C}_{V}$ and $\alpha \in \mathbb{R}$;
		\item[3)] We have
		\begin{equation}
		j=\max_{\mathbb{F}}\operatorname{dim}\mathbb{F} =: \operatorname{d}(\mathcal{C}_{V}),
		\end{equation}
		where the maximum is taken over all linear subspaces $\mathbb{F} \subset \mathbb{H}_{\alpha}$ such that $\mathbb{F} \subset \mathcal{C}_{V}$.
	\end{enumerate}
\end{theorem}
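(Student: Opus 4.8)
The plan is to extract sign information on $P$ from the Lyapunov inequality \eqref{EQ: LyapunovInequalityIntegralForm} by evaluating it along trajectories that either decay forward in time (on $\mathbb{H}^{s}_{\alpha}$) or decay backward in time (on $\mathbb{H}^{u}_{\alpha}$), and then deduce the cone structure of $\mathcal{C}_{V}$ from these sign properties together with $\dim\mathbb{H}^{u}_{\alpha}=j<\infty$.

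First I would establish positivity on $\mathbb{H}^{s}_{\alpha}$. Take $0\neq v_{0}\in\mathbb{H}^{s}_{\alpha}$ and $v(t)=G(t)v_{0}$. Since $v(t)\to 0$ in $\mathbb{H}_{\alpha}$ as $t\to+\infty$, letting $T\to+\infty$ in \eqref{EQ: LyapunovInequalityIntegralForm} gives $-V(v_{0})\leq -\delta\int_{0}^{\infty}|v(t)|^{2}_{\mathbb{H}_{\alpha}}dt$, i.e. $V(v_{0})\geq\delta\int_{0}^{\infty}|v(t)|^{2}_{\mathbb{H}_{\alpha}}dt\geq\delta|v_{0}|^{2}_{\mathbb{H}_{\alpha}}>0$ (the last integral is finite because the integrand is nonnegative and the left-hand side is bounded; and the $t=0$ term already bounds it below by $|v_{0}|^2_{\mathbb{H}_\alpha}$ up to the relevant constant — more carefully, $\int_0^\infty|v(t)|^2 dt>0$ since $v$ is continuous and $v(0)\neq 0$). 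Hence $P$ is positive on $\mathbb{H}^{s}_{\alpha}$. For negativity on $\mathbb{H}^{u}_{\alpha}$, take $0\neq v_{0}\in\mathbb{H}^{u}_{\alpha}$ and its backward extension $v(\cdot)$ with $v(t)\to 0$ as $t\to-\infty$. Apply \eqref{EQ: LyapunovInequalityIntegralForm} on $[-S,0]$ (using $v(t)=G(t+S)v(-S)$): $V(v(0))-V(v(-S))\leq-\delta\int_{-S}^{0}|v(t)|^{2}dt$. Letting $S\to+\infty$ yields $V(v_{0})\leq-\delta\int_{-\infty}^{0}|v(t)|^{2}dt<0$, so $P$ is negative on $\mathbb{H}^{u}_{\alpha}$.

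Next, the cone properties. Item 1) is immediate: $V$ is continuous on $\mathbb{H}_{\alpha}$ (since $P\in\mathcal{L}(\mathbb{H}_{\alpha})$), so $\mathcal{C}_{V}=V^{-1}((-\infty,0])$ is closed. Item 2) is immediate too: $V(\alpha v)=\alpha^{2}V(v)$, so $V(\alpha v)\leq 0$ whenever $V(v)\leq 0$. For item 3), I would argue both inequalities. First, $\mathbb{H}^{u}_{\alpha}\subset\mathcal{C}_{V}$ because $P$ is negative there (so $V(v)<0$ for $v\neq 0$, and $V(0)=0$), hence $\mathrm{d}(\mathcal{C}_{V})\geq j$. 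For the reverse bound $\mathrm{d}(\mathcal{C}_{V})\leq j$: suppose $\mathbb{F}\subset\mathcal{C}_{V}$ is a subspace with $\dim\mathbb{F}\geq j+1$. Since $\dim\mathbb{H}^{u}_{\alpha}=j$ and $\mathbb{H}_{\alpha}=\mathbb{H}^{s}_{\alpha}\oplus\mathbb{H}^{u}_{\alpha}$, the restriction to $\mathbb{F}$ of the projection $\Pi_{u}$ along $\mathbb{H}^{s}_{\alpha}$ onto $\mathbb{H}^{u}_{\alpha}$ has a nontrivial kernel, so there is $0\neq w\in\mathbb{F}\cap\mathbb{H}^{s}_{\alpha}$. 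But then $V(w)>0$ by positivity of $P$ on $\mathbb{H}^{s}_{\alpha}$, contradicting $w\in\mathbb{F}\subset\mathcal{C}_{V}$. Therefore every subspace inside $\mathcal{C}_{V}$ has dimension at most $j$, and combined with the lower bound, $\mathrm{d}(\mathcal{C}_{V})=j$.

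The only subtle point — and thus the main obstacle — is making the limiting arguments in the first paragraph fully rigorous: one must justify that $\int_{0}^{\infty}|v(t)|^{2}_{\mathbb{H}_{\alpha}}dt$ (resp. $\int_{-\infty}^{0}|v(t)|^{2}_{\mathbb{H}_{\alpha}}dt$) is well-defined and that the boundary term $V(v(T))$ (resp. $V(v(-S))$) tends to $0$; the former follows from continuity of $V$ and convergence $v(T)\to 0$ in $\mathbb{H}_{\alpha}$, and the finiteness of the integral then follows from \eqref{EQ: LyapunovInequalityIntegralForm} since the left side is bounded as $T\to\infty$. Strict positivity/negativity (not just $\geq 0$, $\leq 0$) uses that $v(\cdot)$ is continuous and nonzero near $t=0$, so the relevant integral is strictly positive; alternatively one observes directly $V(v_0)\ge \delta\int_0^\infty |v(t)|^2_{\mathbb H_\alpha}\,dt$ and the integrand is continuous and equal to $|v_0|^2_{\mathbb H_\alpha}>0$ at $t=0$. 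Everything else is routine linear algebra and continuity.
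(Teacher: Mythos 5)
Your proposal is correct and follows essentially the same route as the paper: pass to the limit $T\to+\infty$ (resp.\ along the backward extension) in \eqref{EQ: LyapunovInequalityIntegralForm} to get $V(v_{0})\geq\delta\int_{0}^{\infty}|v(t)|^{2}_{\mathbb{H}_{\alpha}}dt$ on $\mathbb{H}^{s}_{\alpha}$ and the mirror bound on $\mathbb{H}^{u}_{\alpha}$, then use the resulting strict positivity on $\mathbb{H}^{s}_{\alpha}$ to show any subspace of $\mathcal{C}_{V}$ of dimension exceeding $j$ would meet $\mathbb{H}^{s}_{\alpha}\setminus\{0\}$ (the paper phrases this via linear dependence of $k>j$ vectors, you via the kernel of the projection onto $\mathbb{H}^{u}_{\alpha}$ --- the same argument). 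The only blemish is the intermediate claim $\int_{0}^{\infty}|v(t)|^{2}_{\mathbb{H}_{\alpha}}dt\geq|v_{0}|^{2}_{\mathbb{H}_{\alpha}}$, which is false as written, but it is immaterial since you immediately give the correct justification of strict positivity from continuity of $v$ at $t=0$.
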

\begin{proof}
	Taking it to the limit as $T \to +\infty$ in \eqref{EQ: LyapunovInequalityIntegralForm} for $v_{0} \in \mathbb{H}^{s}_{\alpha}$ we have
	\begin{equation}
	\label{EQ: PositivenessLyapunovInt}
	V(v_{0}) \geq \delta \int_{0}^{+\infty}|v(t)|^{2}_{\mathbb{H}_{\alpha}}dt.
	\end{equation}
	Analogously, for $v_{0} \in \mathbb{H}^{u}_{\alpha}$ we have
	\begin{equation}
	V(v_{0}) \leq -\delta\int_{-\infty}^{0}|v(t)|^{2}_{\mathbb{H}_{\alpha}}dt.
	\end{equation}
	Therefore, $V(v) > 0$ for all $v \in \mathbb{H}^{s}_{\alpha}$, $v \not=0$, and $V(v) < 0$ for all $v \in \mathbb{H}^{u}_{\alpha}$, $v \not= 0$. So, the sign properties of $P$ are established.
	
	Properties 1), 2) are obvious. Let a subspace $\mathbb{F} \subset \mathbb{H}_{\alpha}$ such that $\mathbb{F} \subset \mathcal{C}_{V}$ be given. We fix $k>j$ vectors $e_{1},\ldots, e_{k} \in \mathbb{H}_{\alpha}$. Since $\mathbb{H}_{\alpha} = \mathbb{H}^{s}_{\alpha} \oplus \mathbb{H}^{u}_{\alpha}$, for all $i=1,\ldots,k$ there exists a unique decomposition
	\begin{equation}
	e_{i} = e^{s}_{i} + e^{u}_{i},
	\end{equation}
	where $e^{s}_{i} \in \mathbb{H}^{s}_{\alpha}$ and $e^{u}_{i} \in \mathbb{H}^{u}_{\alpha}$. Since $k > \operatorname{dim} \mathbb{H}^{u}_{\alpha}$ there are constants $c_{i}$ such that
	\begin{equation}
	\sum_{i=1}^{k} c_{i} e^{u}_{i} = 0.
	\end{equation}
	From this we have
	\begin{equation}
	\label{EQ: ConeDecompLemma}
    \sum_{i=1}^{k} c_{i} e_{i} = \sum_{i=1}^{k} c_{i} e^{s}_{i} + \sum_{i=1}^{k} c_{i} e^{u}_{i} = \sum_{i=1}^{k} c_{i} e^{s}_{i}.
	\end{equation}
	From \eqref{EQ: ConeDecompLemma} and since $\mathbb{F} \subset \mathcal{C}_{V}$, we must have
	\begin{equation}
	0 \geq V\left(\sum_{i=1}^{k} c_{i} e_{i}\right) = V\left(\sum_{i=1}^{k} c_{i} e^{s}_{i}\right) \geq 0.
	\end{equation}
	Thus, $V\left(\sum_{i=1}^{k} c_{i} e^{s}_{i}\right) = 0$ or, in virtue of \eqref{EQ: PositivenessLyapunovInt}, $\sum_{i=1}^{k} c_{i} e^{s}_{i}=0$. But from \eqref{EQ: ConeDecompLemma} it follows that $e_{1},\ldots,e_{k}$ are linearly dependent. Since this holds for any $k > j$ vectors in $\mathbb{F}$, we have  $\operatorname{dim} \mathbb{F} \leq j$. Clearly, for $\mathbb{F} := \mathbb{H}^{u}_{\alpha}$ we have $\mathbb{F} \subset \mathcal{C}_{V}$ and $\dim \mathbb{F} = j$. Thus, $d(\mathcal{C}_{V}) = j$. The proof is finished.
\end{proof}
\section{Inertial manifolds for semilinear parabolic equations and frequency-domain methods}
\label{SEC: IMParabSemilinear}

\subsection{\textbf{Preliminaries}}
Let $A \colon \mathcal{D}(A) \subset \mathbb{H} \to \mathbb{H}$ be a self-adjoint positive-definite operator acting in a real Hilbert space $\mathbb{H}$ and such that $A^{-1} \colon \mathbb{H} \to \mathbb{H}$ is compact. Then one can define powers of $A$ and a scale of Hilbert spaces $\mathbb{H}_{\alpha}:=\mathcal{D}(A^{\alpha})$ with the scalar products $( v_{1},v_{2})_{\alpha}:=(A^{\alpha}v_{1},A^{\alpha}v_{2})$ for all $v_{1},v_{2} \in \mathcal{D}(A^{\alpha})$ (see, for example, the monograph of I.~Chueshov \cite{Chueshov2015}). Let $\alpha \in [0,1)$ be fixed and let $\mathbb{M},\Xi$ be some Hilbert spaces. Suppose that linear bounded operators $C \colon \mathbb{H}_{\alpha} \to \mathbb{M}$ and $B \colon \Xi \to \mathbb{H}$ are given. Let $W \colon \mathbb{R} \to \mathbb{H}$ be a continuous function and let $F \colon \mathbb{R} \times \mathbb{M} \to \Xi$ be a continuous map such that for some constant $\Lambda>0$ and all $y_{1},y_{2} \in \mathbb{M}$, $t \in \mathbb{R}$ we have
\begin{equation}
|F(t,y_{1})-F(t,y_{2})|_{\Xi} \leq \Lambda |y_{1}-y_{2}|_{\mathbb{M}}.
\end{equation}

Finally, let $K \in \mathcal{L}(\mathbb{H}_{\alpha};\mathbb{H})$. We consider the nonlinear evolution equation
\begin{equation}
\label{EQ: ParabolicNonAutEquation}
\dot{v}(t) = -Av(t) + Kv(t) + BF(t,Cv(t)) + W(t).
\end{equation}

\begin{remark}
	There is a more general situation, in which $A$ is a sectorial operator, treated by D.~Henry in \cite{Henry1981}. However, these results require H\"{o}lder-like properties in $t$ for the nonlinearity $F$, which we do not want to assume. However, these restrictions can be relaxed if we use the approach based on the Banach fixed point theorem from Theorem 4.2.3 in \cite{Chueshov2015}, which also works for the case when $A$ is sectorial. So, our results below remains true for the case when $A$ is sectorial.
\end{remark}
Following the fixed point arguments from Theorem 4.2.3 in \cite{Chueshov2015}, it can be shown that for any $v_{0} \in \mathbb{H}_{\alpha}$ and $t_{0} \in \mathbb{R}$ there exists a unique mild solution $v(t)=v(t,t_{0},v_{0})$ defined for $t \geq t_{0}$ and such that $v(t_{0})=v_{0}$. For such $v(\cdot)$ we have the variation of constants formula:
\begin{equation}
	v(t)=G(t-t_{0})v_{0} + \int_{t_{0}}^{t}G(t-s)(Kv(s)+BF(s,Cv(s))+W(s))ds
\end{equation}
satisfied. Moreover, for every $T>0$ there is a constant $C=C(T)>0$ such that the estimate 
\begin{equation}
|v(t;t_{0},v_{1}) - v(t;t_{0},v_{2})|_{\alpha} \leq C |v_{1}-v_{2}|_{\alpha}
\end{equation}
holds for all $v_{1},v_{2} \in \mathbb{H}_{\alpha}$, $t_{0} \in \mathbb{R}$ and $t \in [t_{0},t_{0}+T]$. From the integral relation (which defines mild solutions) it is easy to show that there is also a continuous dependence on $t_{0}$. From this it follows that $\psi^{t}(q,v_{0}):=v(t,q,v_{0})$, where $t \geq 0$, $q \in \mathbb{R}$ and $v_{0} \in \mathbb{H}_{\alpha}$, is a cocycle in $\mathbb{H}_{\alpha}$ over the shift dynamical system $\vartheta^{t}$ on $\mathcal{Q}:=\mathbb{R}$, i.~e. the map $(t,q,v_{0}) \mapsto \psi^{t}(q,v_{0})$ is continuous and the cocycle property
\begin{equation}
	\psi^{t+s}(q,v) = \psi^{t}(\vartheta^{s}(q),\psi^{s}(q,v))
\end{equation}
is satisfied for all $t,s \geq 0$, $q \in \mathcal{Q}$ and $v \in \mathbb{H}_{\alpha}$. Moreover, the map $\psi^{t}(q,\cdot) \colon \mathbb{H}_{\alpha} \to \mathbb{H}_{\alpha}$ is compact for all $t > 0$ and $q \in \mathcal{Q}$. If $F$ is, for example, periodic or almost periodic in $t$, then there are other appropriate choices of $\mathcal{Q}$, which we shall not discuss here (see \cite{Anikushin2020Geom}).

Since we are going to apply the Frequency Theorem to the operator $-A+K+\nu I$ with some $\nu \in \mathbb{R}$, we have to check that such operators posses all the required properties. It is known that unbounded perturbations of a generator of $C_{0}$-semigroup may make the corresponding Cauchy problem incorrect (see \cite{Krein1971}). Here we refer to some specificity of parabolic equations (mainly concerned with sectorial operators), which can be found in the monograph of D.~Henry \cite{Henry1981}.

In the following theorem for convenience we do not mention the complexifications of spaces and operators when this is formally required (especially when dealing with the resolvent). Thus we are making a slight abuse of notation here. In order not to get confused one may think that all the considered spaces are complex.
\begin{theorem}
	\label{TH: ParabLinPertTheorem}
	Under the above assumptions on $A$ and $K$ we have the following:
	\begin{enumerate}
		\item[1)]  The operator $A-K$ with the domain $\mathbb{H}_{1}=\mathcal{D}(A)$ is sectorial and, consequently, there is a $C_{0}$-semigroup (in fact, an analytic semigroup) $G_{K}(t)$ in $\mathbb{H}$ generated by $-A+K$ in $\mathbb{H}=\mathbb{H}_{0}$.
		\item[2)] $G_{K}(t)$ is also a $C_{0}$-semigroup in $\mathbb{H}_{\alpha}$ and satisfies for all $v_{0} \in \mathbb{H}_{\alpha}$ and $t \geq 0$ the identity
		\begin{equation}
			\label{EQ: ParabPerturbedSemigroupDef}
			G_{K}(t)v_{0} = G(t)v_{0} + \int_{0}^{t}G(t-s)KG_{K}(s)v_{0}ds.
		\end{equation}
	    Its generator is the operator $-A+K$ with the domain $\mathbb{H}_{1+\alpha}$.
	    \item[3)]  If for some $\nu \in \mathbb{R}$ the line $-\nu + i\mathbb{R}$ does not intersect with the spectrum of $-A+K$ then the operator $-A+K+\nu I $ satisfies \textbf{(RES)} and we have
	    \begin{equation}
	    	\| (-A+K + (\nu - i \omega)I)^{-1} \|_{\mathbb{H} \to \mathbb{H}_{1}} \to 0 \text{ as } |\omega| \to +\infty,
	    \end{equation}
        where the limit is uniform in $\nu$ from compact subsets of $\mathbb{R}$.
        \item[4)] For any $v_{0} \in \mathbb{H}_{\alpha}$ the mild solution $v(t)=v(t,t_{0},v_{0})$ to \eqref{EQ: ParabolicNonAutEquation} satisfies for all  $t \geq t_{0}$
        \begin{equation}
        	v(t)=G_{K}(t-t_{0})v_{0} + \int_{t_{0}}^{t}G_{K}(t-s)(BF(s,v(s)) + W(s))ds.
        \end{equation}
    	\end{enumerate}
\end{theorem}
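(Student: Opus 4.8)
The plan is to prove the four items in order, bootstrapping everything from the sectoriality in 1); items 2)--4) are then consequences of the standard analytic-semigroup calculus on fractional power scales (cf. \cite{Henry1981,Krein1971}), the Duhamel identity \eqref{EQ: ParabPerturbedSemigroupDef}, and a singular Gronwall lemma of the type underlying the fixed-point construction in \cite{Chueshov2015}. \textbf{Item 1).} Since $A$ is self-adjoint, positive-definite and has compact resolvent it is sectorial in $\mathbb{H}$ with $\mathcal{D}(A)=\mathbb{H}_{1}$, and the perturbation is subordinate: $|Kv| \le \|K\|_{\mathbb{H}_{\alpha}\to\mathbb{H}}\,|A^{\alpha}v|$ for $v \in \mathbb{H}_{1}$. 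Using the moment bound $\|A^{\alpha}(\lambda I-A)^{-1}\|_{\mathcal{L}(\mathbb{H})} \le C_{\alpha}(1+|\lambda|)^{\alpha-1}$ on a sector $\Sigma$ about the negative real axis, one gets $\|K(\lambda I-A)^{-1}\|_{\mathcal{L}(\mathbb{H})}<1/2$ for $\lambda \in \Sigma$ with $|\lambda|$ large, so $\lambda I-(A-K)=(I-K(\lambda I-A)^{-1})(\lambda I-A)$ is boundedly invertible there with a sectorial estimate; since $A-K$ has compact resolvent its spectrum accumulates only at infinity, so after a bounded shift $A-K$ is sectorial on all of $\Sigma$. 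This is the classical perturbation theorem for sectorial operators and yields the analytic semigroup $G_{K}(t)=e^{-(A-K)t}$ in $\mathbb{H}$.

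\textbf{Items 2) and 3).} Analyticity gives $G_{K}(t)\mathbb{H}\subset\mathcal{D}(A-K)=\mathbb{H}_{1}\subset\mathbb{H}_{\alpha}$ for $t>0$ with $\|(A-K)G_{K}(t)\|_{\mathcal{L}(\mathbb{H})}\le C/t$; since the fractional-power scale of $A-K$ coincides with $\{\mathbb{H}_{\beta}\}$ on the relevant range (a standard feature of lower-order perturbations), the moment inequality yields both $\|G_{K}(t)\|_{\mathbb{H}\to\mathbb{H}_{\alpha}}\le Ct^{-\alpha}$ and, when $(-\nu+i\mathbb{R})$ misses $\sigma(-A+K)$, the bound $\|(-A+K+(\nu-i\omega)I)^{-1}\|_{\mathbb{H}\to\mathbb{H}_{\alpha}}=\|A^{\alpha}(A-K-(\nu-i\omega)I)^{-1}\|_{\mathcal{L}(\mathbb{H})}\le C|\nu-i\omega|^{\alpha-1}\to 0$ as $|\omega|\to\infty$ (for $|\omega|$ large the line enters $\Sigma$, boundedness on the compact part of the line is automatic, and uniformity in $\nu$ on compacts follows from uniformity of the sectorial constants under bounded shifts); this gives \textbf{(RES)} and item 3). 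Identity \eqref{EQ: ParabPerturbedSemigroupDef} is Duhamel's formula for $\dot v=-Av+Kv$ with $Kv$ viewed as forcing, and I would verify it by comparing Laplace transforms, i.e. from the resolvent identity $(\lambda I+A-K)^{-1}=(\lambda I+A)^{-1}+(\lambda I+A)^{-1}K(\lambda I+A-K)^{-1}$. For $v_{0}\in\mathbb{H}_{\alpha}$ the right-hand side of \eqref{EQ: ParabPerturbedSemigroupDef} is well-defined in $\mathbb{H}_{\alpha}$ because $\|G(t-s)KG_{K}(s)v_{0}\|_{\alpha}\le C(t-s)^{-\alpha}s^{-\alpha}|v_{0}|_{\alpha}$ with both singularities integrable; the singular Gronwall lemma then shows this Volterra equation has a unique $C([0,T];\mathbb{H}_{\alpha})$ solution, so $G_{K}(\cdot)v_{0}\in C([0,T];\mathbb{H}_{\alpha})$ with $\sup_{t\in[0,T]}\|G_{K}(t)\|_{\mathcal{L}(\mathbb{H}_{\alpha})}<\infty$, and the semigroup law on $\mathbb{H}_{\alpha}$ follows from that uniqueness. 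Strong continuity at $t=0$ in $\mathbb{H}_{\alpha}$ comes from $G_{K}(t)v_{0}-v_{0}=(G(t)v_{0}-v_{0})+\int_{0}^{t}G(t-s)KG_{K}(s)v_{0}\,ds$ together with the fact that $e^{-At}$ is already a $C_{0}$-semigroup on $\mathbb{H}_{\alpha}=\mathcal{D}(A^{\alpha})$; thus $G_{K}$ is a $C_{0}$-semigroup on $\mathbb{H}_{\alpha}$, and a resolvent computation (the resolvent of $-A+K$ carries $\mathbb{H}_{\alpha}$ into $\mathbb{H}_{1+\alpha}$, again by lower-order-ness of $K$) identifies its generator as $-A+K$ with domain $\mathbb{H}_{1+\alpha}$.

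\textbf{Item 4).} Existence and uniqueness of the mild solution $v(\cdot)=v(\cdot,t_{0},v_{0})$ of \eqref{EQ: ParabolicNonAutEquation} (with the $G$-based variation of constants) is the fixed-point statement recalled before the theorem. I would set $w(t):=G_{K}(t-t_{0})v_{0}+\int_{t_{0}}^{t}G_{K}(t-s)(BF(s,Cv(s))+W(s))\,ds$ using this already known $v$, substitute \eqref{EQ: ParabPerturbedSemigroupDef} into each factor $G_{K}$, and interchange the order of integration (Fubini is licensed by the integrable $t^{-\alpha}$-type singularities). The double integrals reorganize into $w(t)=G(t-t_{0})v_{0}+\int_{t_{0}}^{t}G(t-s)(Kw(s)+BF(s,Cv(s))+W(s))\,ds$; subtracting the integral equation satisfied by $v$ itself, the difference $w-v$ obeys $(w-v)(t)=\int_{t_{0}}^{t}G(t-s)K(w-v)(s)\,ds$, whence singular Gronwall forces $w\equiv v$, which is item 4).

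\textbf{Main obstacle.} The technical heart is item 2): showing that $G_{K}$ genuinely restricts to a $C_{0}$-semigroup on $\mathbb{H}_{\alpha}$ with generator of domain exactly $\mathbb{H}_{1+\alpha}$. This rests on the (standard but not wholly trivial) coincidence of the fractional-power scales of $A$ and of the perturbed operator $A-K$, and on a careful treatment of the weakly singular kernel $t^{-\alpha}$ in \eqref{EQ: ParabPerturbedSemigroupDef}; once these are handled, the remaining assertions are routine consequences of sectoriality, the moment inequality and Duhamel's formula.
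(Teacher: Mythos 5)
Your proposal is correct in substance and rests on the same machinery as the paper (relative boundedness of $K$, perturbation of sectorial operators, the smoothing estimate $\|G(t)\|_{\mathbb{H}\to\mathbb{H}_{\alpha}}\leq Ct^{-\alpha}$, Duhamel), but several sub-arguments are genuinely different and worth comparing. For item 1) you essentially reprove, via the Neumann series on the resolvent, the perturbation theorem that the paper simply cites (Theorem 1.3.2 of Henry); same content, more explicit. For item 2) the paper \emph{defines} $G_{K}$ on $\mathbb{H}_{\alpha}$ as the fixed point of the Volterra equation \eqref{EQ: ParabPerturbedSemigroupDef} and gets the generator from Henry's Theorem 1.4.8, whereas you start from the analytic semigroup on $\mathbb{H}$, restrict it using the coincidence of fractional-power scales, and verify \eqref{EQ: ParabPerturbedSemigroupDef} through the resolvent identity plus singular Gronwall; these are equivalent routes, and your appeal to the coincidence of scales is exactly the same citation-level step the paper takes. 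For item 4) your argument is the most visibly different: you substitute \eqref{EQ: ParabPerturbedSemigroupDef} into the $G_{K}$-based formula, reorganize by Fubini, and conclude by uniqueness of the singular Volterra equation, while the paper first proves the identity for classical solutions ($v_{0}\in\mathcal{D}(A)$, $\xi\in C^{1}$, via Theorem 6.5 of Krein) and then passes to the limit by density; your computation avoids the density step and is arguably more self-contained, at the price of a Fubini justification which you correctly note is covered by the integrable $t^{-\alpha}$ singularities. Finally, for item 3) you estimate the resolvent in the $\mathcal{L}(\mathbb{H};\mathbb{H}_{\alpha})$ norm and obtain the decay $C|\omega|^{\alpha-1}$, which is precisely what \textbf{(RES)} requires; note that this is not literally the displayed claim of the theorem, which asserts decay in $\mathcal{L}(\mathbb{H};\mathbb{H}_{1})$ — a claim that already fails for $K=0$, since $\sup_{k}\lambda_{k}/|\lambda_{k}-\nu+i\omega|$ stays near $1$ as $|\omega|\to\infty$ — so your version is in fact the correct and usable one, whereas the paper's own estimate \eqref{EQ: ParabTheoremVerify} only yields uniform boundedness of $\|(-A+K-pI)^{-1}\|_{\mathbb{H}\to\mathbb{H}_{1}}$ for large $|\omega|$, which suffices for \textbf{(RES)} but not for the stated limit.
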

\begin{proof}
    1) Put $C_{K}:=\|K\|_{\mathbb{H}_{\alpha} \to \mathbb{H}}$. From the interpolation inequality (see Exercise 4.1.2 in \cite{Chueshov2015} or Theorem 1.4.4 in \cite{Henry1981}) for every $\varepsilon>0$ we have
    \begin{equation}
    	|Kv|_{0} \leq C_{K}|A^{\alpha}v|_{0} \leq C_{K} \varepsilon |Av|_{0} + C_{K} \left(\frac{\alpha}{\varepsilon}\right)^{\alpha/(1-\alpha)} (1-\alpha)|v|_{0}.
    \end{equation}
    Applying Theorem 1.3.2 from \cite{Henry1981}, we get that the operator $A-K$ in $\mathbb{H}$ with the domain $\mathbb{H}_{1}=\mathcal{D}(A)$ is sectorial and, consequently, it generates an analytic semigroup in $\mathbb{H}$ (see Theorem 1.3.4 in \cite{Henry1981}).
    
    2) From the fixed point arguments as in \cite{Chueshov2015} and the smoothing estimate
    \begin{equation}
    	\label{EQ: SmoothingEstimate}
    	\|G(t)\|_{\mathbb{H} \to \mathbb{H}_{\alpha}} \leq \left(\frac{\alpha}{e}\right)^{\alpha} t^{-\alpha} \text{ for all } t > 0.
    \end{equation} 
    we get that the $C_{0}$-semigroup $G_{K}(t)$, $t \geq 0$, in $\mathbb{H}_{\alpha}$ is well-defined by \eqref{EQ: ParabPerturbedSemigroupDef}. The part concerning the generator of this semigroup follows from 1) and Theorem 1.4.8 in \cite{Henry1981}.
    
    3) Note that for any closed operator $M \colon \mathcal{D}(M) \subset \mathbb{H} \to \mathbb{H}$ the map $\varrho(M) \ni p \mapsto (M-pI)^{-1} \in \mathcal{L}(\mathbb{H};\mathcal{D}(M))$, where $\varrho(M)$ denotes the resolvent set of $M$ and the domain $\mathcal{D}(M)$ is endowed with the graph norm, is continuous. Thus, since $\mathbb{H}_{\alpha} \subset \mathbb{H}_{1} = \mathcal{D}(A)$ and the graph norm corresponding to $-A+K$ is equivalent to the norm of $\mathbb{H}_{1}$, for our purposes it is sufficient to show that the value $\|(-A+K-pI)^{-1}\|_{\mathbb{H} \to \mathbb{H}_{1}}$ is bounded uniformly in $p=-\nu + i\omega$ for all sufficiently large $\omega \in \mathbb{R}$. 
    
    Suppose for $w \in \mathbb{H}$ and $v \in \mathbb{H}_{1}$ we have $(-A+K-pI)v=w$. Then $v = (A+pI)^{-1}Kv - (A+pI)^{-1}w$ for $p=-\nu + i\omega$ and sufficiently large $|\omega|$ and we have the estimate
    \begin{equation}
    \label{EQ: ParabTheoremVerify}
    | (-A+K-pI)^{-1}w |_{1} =	|v|_{1} \leq \frac{\| (A+pI)^{-1}\|_{\mathbb{H}\to\mathbb{H}_{1}}}{1 - \| (A+pI)^{-1}\|_{\mathbb{H}\to\mathbb{H}_{1}} \cdot \|K\|_{\mathbb{H}_{\alpha} \to \mathbb{H}}} |w|_{0}.
    \end{equation}
    Clearly, the right-hand side of \eqref{EQ: ParabTheoremVerify} tends to $0$ as $|\omega| \to +\infty$ uniformly in $\nu$ from compact subsets of $\mathbb{R}$.
    
    4) For $v_{0} \in \mathcal{D}(A)$ and $\xi(\cdot) \in C^{1}([0,T];\mathbb{H})$ let us consider the mild solution $v(\cdot)$ to $\dot{v}(t)=(-A+K)v(t) + \xi(t)$, which is given by
    \begin{equation}
    	v(t)=G_{K}(t)v_{0} + \int_{0}^{t}G_{K}(t-s)\xi(s)ds.
    \end{equation}
    Since $v_{0} \in \mathcal{D}(A)$ and $ \mathcal{D}(A) = \mathcal{D}(-A+K)$ by item 1), we in fact have $v(\cdot) \in C^{1}([0,T];\mathbb{H})$, $v(t) \in \mathcal{D}(A)$ and $\dot{v}(t)=(-A+K)v(t)+\xi(t)=-Av(t)+Kv(t)+\xi(t)$ for all $t \in [0,T]$ (see Theorem 6.5 in \cite{Krein1971}). Thus for all $v_{0} \in \mathcal{D}(A)$ and $\xi(\cdot) \in C^{1}([0,T];\mathbb{H})$ we have the equality
    \begin{equation}
    	\label{EQ: ParabPertTheoremLast}
    	G(t)v_{0}+\int_{0}^{t}G(t-s)(Kv(s)+\xi(s))ds = G_{K}(t)v_{0} + \int_{0}^{t}G_{K}(t-s)\xi(s)ds.
    \end{equation}
    Since $\mathcal{D}(A)$ is dense in $\mathbb{H}_{\alpha}$ and $C^{1}([0,T];\mathbb{H})$ is dense in $C([0,T];\mathbb{H})$, we can extend \eqref{EQ: ParabPertTheoremLast} for $v_{0} \in \mathbb{H}_{\alpha}$ and $\xi(\cdot) \in C([0,T];\mathbb{H})$ such that $\xi(s)=BF(s,Cv(s))+W(s)$, where $v(s)=v(s,0,v_{0})$ is a mild solution to \eqref{EQ: ParabolicNonAutEquation}. Indeed, from the fixed point arguments we get that solutions to
    \begin{equation}
    	v(t)=G(t)v_{0} + \int_{0}^{t}G(t-s)(Kv(s)+\xi(s))ds
    \end{equation}
    depend continuously on $\xi(\cdot) \in C([0,T];\mathbb{H})$ in the space $C([0,T];\mathbb{H}_{\alpha})$. The theorem is proved.
\end{proof}

So, if $w(t)=v(t,t_{0},v_{0,1})-v(t,t_{0},v_{0,2})$ is the difference of two mild solutions of \eqref{EQ: ParabolicNonAutEquation} corresponding to $v_{0,1},v_{0,2} \in \mathbb{H}_{\alpha}$, then it is also a mild solution to the inhomogeneous linear equation
\begin{equation}
\label{EQ: ParabInhomogeneous}
\dot{w}(t)=(-A+K)w(t)+B\xi(t),
\end{equation}
where $\xi(t):=F(t,Cv(t,t_{0},v_{0,1}))-F(t,Cv(t,t_{0},v_{0,2}))$ and $w(t_{0})=v_{0,1}-v_{0,2} \in \mathbb{H}_{\alpha}$. Note that $w(\cdot) \in C([0,T];\mathbb{H}_{\alpha})$ for all $T > 0$.

Now let us consider the quadratic form $\mathcal{F}(v,\xi)$ for $v \in \mathbb{H}_{\alpha}$ and $\xi \in \Xi$ as in \eqref{EQ: RealQuadraticFormParab}. We will also require that $\mathcal{F}(\cdot,0) \geq 0$ and for any $v_{1},v_{2} \in \mathbb{H}_{\alpha}$ we have $\mathcal{F}(v_{1}-v_{2},\xi_{1}-\xi_{2}) \geq 0$ provided that $\xi_{i} = F(t,Cv_{i})$, where $i=1,2$ and $t \in \mathbb{R}$ is arbitrary. Now we have to apply Theorem \ref{TH: ParabRealFreqTheorem} with some $\nu \in \mathbb{R}$ to the pair $(-A+K + \nu I, B)$. For this we have to check the frequency-domain condition
\begin{equation}
\label{EQ: FreqConditionFormGeneral}
\sup_{\xi \in \Xi^{\mathbb{C}}, \xi \not = 0} \frac{\mathcal{F}^{\mathbb{C}}(-(-A+K-pI)^{-1}B\xi,\xi)}{|\xi|^{2}_{\Xi^{\mathbb{C}}}} <  0 \text{ for all } p = -\nu + i \omega, \text{ where } \omega \in \mathbb{R}. 
\end{equation}

As a special case let us consider $\mathcal{F}(v,\xi):= \Lambda^{2}|Cv|^{2}_{\mathbb{M}}-|\xi|^{2}_{\Xi}$ for $v \in \mathbb{H}_{\alpha}$ and $\xi \in \Xi$. It is clear that $\mathcal{F}(\cdot,0) \geq 0$ and we have $\mathcal{F}(v_{1}-v_{2},\xi_{1}-\xi_{2}) \geq 0$ for any $v_{1},v_{2} \in \mathbb{H}_{\alpha}$ and $\xi_{i} = F(t,Cv_{i})$, where $i=1,2$ and $t \in \mathbb{R}$ is arbitrary. Note also that since $(Cv,Cv)_{\mathbb{M}} = (C^{*}Cv,v)_{\alpha}$, the Hermitian extension of $\mathcal{F}$ has the form as in \eqref{EQ: QuadraticFormComplex}. If we consider the transfer operator $W(p):=C(-A+K-pI)^{-1}B$, then \eqref{EQ: FreqConditionFormGeneral} in this case is equivalent to
\begin{equation}
\label{EQ: FreqParabGeneral}
\|W(-\nu + i\omega)\|_{\Xi^{\mathbb{C}} \to \mathbb{M}^{\mathbb{C}}} < \Lambda^{-1}, \text{ for all } \omega \in \mathbb{R}.
\end{equation}

If for some $\nu \in \mathbb{R}$ the $C_{0}$-semigroup $e^{\nu t}G_{K}(t)$ (which is generated by $-A+K+\nu I$) in $\mathbb{H}_{\alpha}$ admits an exponential dichotomy, we denote the stable and unstable spaces (w.~r.~t. this dichotomy) by $\mathbb{H}^{s}_{\alpha}(\nu)$ and $\mathbb{H}^{u}_{\alpha}(\nu)$ respectively. Note that in virtue of item 1) of Theorem \ref{TH: ParabLinPertTheorem} we have that the generator of $G_{K}(t)$ in $\mathbb{H}_{\alpha}$ is given by the operator $-A+K$ with the domain $\mathbb{H}_{1+\alpha}$. Since the inclusion $\mathbb{H}_{1+\alpha} \subset \mathbb{H}_{\alpha}$ is compact, the generator has compact resolvent and its spectrum is discrete \cite{EngelNagel2000}. Thus the $C_{0}$-semigroup $e^{\nu t}G_{K}(t)$ in $\mathbb{H}_{\alpha}$ admits an exponential dichotomy with the unstable space $\mathbb{H}^{u}_{\alpha}$ such that $\operatorname{dim} \mathbb{H}^{u}_{\alpha} = j$ if and only if for the operator $-A+K$ there are exactly $j \geq 0$ eigenvalues with $\operatorname{Re}\lambda > -\nu$ and no eigenvalues lie on the line $-\nu + i \mathbb{R}$ (see \cite{EngelNagel2000}).

\begin{theorem}
	\label{TH: ParabEqsMainAssumptionsVer}
	Suppose there is some $\nu \in \mathbb{R}$ such that the semigroup $e^{\nu t}G_{K}(t)$ in $\mathbb{H}_{\alpha}$ admits an exponential dichotomy with the subspaces $\mathbb{H}^{s}_{\alpha}(\nu)$ and $\mathbb{H}^{u}_{\alpha}(\nu)$ such that $\operatorname{dim}\mathbb{H}^{u}_{\alpha} = j \geq 0$. Let the frequency inequality in \eqref{EQ: FreqConditionFormGeneral} be satisfied. Then for the cocycle $(\psi,\vartheta)$ generated by \eqref{EQ: ParabolicNonAutEquation} in $\mathbb{H}_{\alpha}$ we have 
	\begin{description}
		\item[\textbf{(H1)}] There exists an operator $P \in \mathcal{L}(\mathbb{H}_{\alpha})$, self-adjoint in $\mathbb{H}_{\alpha}$ and a decomposition of $\mathbb{H}_{\alpha}$ into the direct sum $\mathbb{H}_{\alpha}=\mathbb{H}^{+}_{\alpha} \oplus \mathbb{H}^{-}_{\alpha}$ such that $P$ is positive on $\mathbb{H}^{+}_{\alpha}$ and negative on $\mathbb{H}^{-}_{\alpha}$. One may take $\mathbb{H}^{+}_{\alpha} = \mathbb{H}^{s}_{\alpha}(\nu)$ and $\mathbb{H}^{-}_{\alpha} = \mathbb{H}^{u}_{\alpha}(\nu)$.
		\item[\textbf{(H2)}] We have $\operatorname{dim}\mathbb{H}^{-}_{\alpha}=j$.
		\item[\textbf{(H3)}] For the quadratic form $V(v):=(Pv,v)_{\alpha}$ and some $\delta>0$ we have
		\begin{equation}
		\begin{split}
		e^{2\nu r} V(\psi^{r}(q,v_{1})-\psi^{r}(q,v_{2})) - e^{2\nu l} V(\psi^{l}(q,v_{1})-\psi^{l}(q,v_{2})) \leq \\ \leq -\delta \int_{l}^{r}e^{2\nu s}|\psi^{s}(q,v_{1})-\psi^{s}(q,v_{2})|^{2}_{\alpha}ds
		\end{split}
		\end{equation}
		satisfied for all $q \in \mathcal{Q}$, $v_{1},v_{2} \in \mathbb{H}_{\alpha}$ and $0 \leq l \leq r$.
	\end{description}
\end{theorem}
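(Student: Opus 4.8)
The plan is to derive \textbf{(H1)}--\textbf{(H3)} from a single application of the real Frequency Theorem (Theorem~\ref{TH: ParabRealFreqTheorem}) to the pair $(A_{\nu},B)$, where $A_{\nu}:=-A+K+\nu I$, together with the cone theorem (Theorem~\ref{TH: HighRankConesTheorem}). First I would verify that $(A_{\nu},B)$ meets the hypotheses of Theorem~\ref{TH: ParabRealFreqTheorem}. Since $e^{\nu t}G_{K}(t)$ admits an exponential dichotomy in $\mathbb{H}_{\alpha}$, the line $-\nu+i\mathbb{R}$ avoids the spectrum of $-A+K$, so by item~3) of Theorem~\ref{TH: ParabLinPertTheorem} the operator $A_{\nu}$ satisfies \textbf{(RES)} (recall $\mathbb{H}_{1}=\mathcal{D}(A)\subset\mathbb{H}_{\alpha}$ continuously, as $\alpha<1$) and $\|(A_{\nu}-i\omega I)^{-1}B\|_{\Xi\to\mathbb{H}_{\alpha}}\to 0$ as $|\omega|\to\infty$, while by item~2) the operator $A_{\nu}$ generates a $C_{0}$-semigroup in $\mathbb{H}_{\alpha}$. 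With $p=-\nu+i\omega$, condition~1 of Theorem~\ref{TH: ParabRealFreqTheorem} for $(A_{\nu},B,\mathcal{F})$ is the uniform form of the frequency inequality \eqref{EQ: FreqConditionFormGeneral}; this uniform form follows from the pointwise one because the supremum in \eqref{EQ: FreqConditionFormGeneral} is continuous in $\omega$ and, by the decay just recalled, tends to $\sup_{|\xi|_{\Xi^{\mathbb{C}}}=1}(\mathcal{F}_{3}^{\mathbb{C}}\xi,\xi)_{\Xi^{\mathbb{C}}}$ as $|\omega|\to\infty$ --- a quantity negative in the situations of interest (e.g.\ it equals $-1$ for $\mathcal{F}(v,\xi)=\Lambda^{2}|Cv|^{2}_{\mathbb{M}}-|\xi|^{2}_{\Xi}$).

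Theorem~\ref{TH: ParabRealFreqTheorem} then yields $\delta>0$ and a self-adjoint $P\in\mathcal{L}(\mathbb{H}_{\alpha})$ such that, with $V(v)=(Pv,v)_{\alpha}$,
\begin{equation}\label{EQ: PlanOperatorIneq}
V(\widehat{v}(T))-V(v_{0})+\int_{0}^{T}\mathcal{F}(\widehat{v}(s),\xi(s))\,ds\le-\delta\int_{0}^{T}\left(|\widehat{v}(s)|^{2}_{\alpha}+|\xi(s)|^{2}_{\Xi}\right)ds
\end{equation}
for all $T\ge 0$, $v_{0}\in\mathbb{H}_{\alpha}$ and $\xi(\cdot)\in L_{2}(0,T;\Xi)$ such that the solution $\widehat{v}(\cdot)$ of $\dot{\widehat{v}}=A_{\nu}\widehat{v}+B\xi$ with $\widehat{v}(0)=v_{0}$ lies in $C([0,T];\mathbb{H}_{\alpha})$. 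To get \textbf{(H3)}, fix $q\in\mathcal{Q}$, $v_{1},v_{2}\in\mathbb{H}_{\alpha}$ and $0\le l\le r$ and set $\overline{w}(\tau):=\psi^{l+\tau}(q,v_{1})-\psi^{l+\tau}(q,v_{2})$ for $\tau\in[0,r-l]$. By the discussion preceding the theorem $\overline{w}$ is a mild solution of $\dot{\overline{w}}=(-A+K)\overline{w}+B\overline{\xi}$ for a suitable continuous control $\overline{\xi}(\cdot)$, and the sign assumptions on $\mathcal{F}$ give $\mathcal{F}(\overline{w}(\tau),\overline{\xi}(\tau))\ge 0$. Put $\widehat{w}(\tau):=e^{\nu(l+\tau)}\overline{w}(\tau)$, $\widehat{\xi}(\tau):=e^{\nu(l+\tau)}\overline{\xi}(\tau)$; then $\widehat{w}$ solves $\dot{\widehat{w}}=A_{\nu}\widehat{w}+B\widehat{\xi}$, lies in $C([0,r-l];\mathbb{H}_{\alpha})$, and $\widehat{\xi}\in L_{2}(0,r-l;\Xi)$. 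Applying \eqref{EQ: PlanOperatorIneq} on $[0,r-l]$ to $(\widehat{w},\widehat{\xi})$, using the degree-two homogeneity of $\mathcal{F}$ and of $V$ (so that $\mathcal{F}(\widehat{w}(\tau),\widehat{\xi}(\tau))=e^{2\nu(l+\tau)}\mathcal{F}(\overline{w}(\tau),\overline{\xi}(\tau))\ge 0$, $V(\widehat{w}(\tau))=e^{2\nu(l+\tau)}V(\overline{w}(\tau))$, and likewise for the $\alpha$-norm term), discarding the nonnegative $\int\mathcal{F}$-term, and substituting $s=l+\tau$, one obtains precisely the inequality in \textbf{(H3)}.

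For \textbf{(H1)} and \textbf{(H2)} I would specialize \eqref{EQ: PlanOperatorIneq} to $\xi\equiv 0$: then $\widehat{v}(t)=G_{\nu}(t)v_{0}$ with $G_{\nu}(t):=e^{\nu t}G_{K}(t)$, and $\mathcal{F}(\widehat{v}(s),0)=(\mathcal{F}_{1}\widehat{v}(s),\widehat{v}(s))_{\alpha}\ge 0$ by the assumption $\mathcal{F}(\cdot,0)\ge 0$, so \eqref{EQ: PlanOperatorIneq} reduces to $V(G_{\nu}(T)v_{0})-V(v_{0})\le-\delta\int_{0}^{T}|G_{\nu}(t)v_{0}|^{2}_{\alpha}\,dt$ --- exactly the Lyapunov inequality \eqref{EQ: LyapunovInequalityIntegralForm} for the semigroup $G_{\nu}$. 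The exponential dichotomy of $G_{\nu}$ provides the decomposition $\mathbb{H}_{\alpha}=\mathbb{H}^{s}_{\alpha}(\nu)\oplus\mathbb{H}^{u}_{\alpha}(\nu)$ with forward decay on $\mathbb{H}^{s}_{\alpha}(\nu)$ and backward decay on the finite-dimensional $\mathbb{H}^{u}_{\alpha}(\nu)$, so the hypotheses of Theorem~\ref{TH: HighRankConesTheorem} hold with $j=\dim\mathbb{H}^{u}_{\alpha}(\nu)<\infty$. That theorem then gives that $P$ is positive on $\mathbb{H}^{s}_{\alpha}(\nu)$ and negative on $\mathbb{H}^{u}_{\alpha}(\nu)$, which is \textbf{(H1)} with $\mathbb{H}^{+}_{\alpha}=\mathbb{H}^{s}_{\alpha}(\nu)$ and $\mathbb{H}^{-}_{\alpha}=\mathbb{H}^{u}_{\alpha}(\nu)$, together with $\dim\mathbb{H}^{-}_{\alpha}=j$, which is \textbf{(H2)}.

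The argument is essentially a synthesis of Theorems~\ref{TH: ParabRealFreqTheorem}, \ref{TH: HighRankConesTheorem} and \ref{TH: ParabLinPertTheorem}, and the step I expect to be the main obstacle is the first: checking that $(A_{\nu},B)$ genuinely satisfies the standing assumptions of the Frequency Theorem --- in particular that \textbf{(RES)} holds, which needs the resolvent of $A_{\nu}$ to be bounded as an operator $\mathbb{H}\to\mathbb{H}_{\alpha}$ (this rests on the sectoriality of $A-K$ and the attendant decay estimate of Theorem~\ref{TH: ParabLinPertTheorem}), and that the pointwise frequency inequality \eqref{EQ: FreqConditionFormGeneral} self-improves to the uniform strict bound demanded by condition~1. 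The $e^{\nu t}$-rescaling and homogeneity bookkeeping in the derivation of \textbf{(H3)} are routine but must be tracked consistently.
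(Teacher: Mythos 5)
Your proposal is correct and follows essentially the same route as the paper: one application of the real Frequency Theorem to the pair $(-A+K+\nu I,B)$ (justified via Theorem~\ref{TH: ParabLinPertTheorem}), dropping the nonnegative $\int\mathcal{F}$-term along differences of solutions to get \textbf{(H3)}, and invoking Theorem~\ref{TH: HighRankConesTheorem} with $\xi\equiv 0$ and $\mathcal{F}(\cdot,0)\ge 0$ for \textbf{(H1)}--\textbf{(H2)}. Your explicit $e^{\nu t}$-rescaling and your remark on upgrading the pointwise inequality \eqref{EQ: FreqConditionFormGeneral} to the uniform bound required by condition~1 of Theorem~\ref{TH: ParabRealFreqTheorem} are details the paper's proof leaves implicit, but they do not change the argument.
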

\begin{proof}
	Theorem \ref{TH: ParabLinPertTheorem} allows us apply Theorem \ref{TH: ParabRealFreqTheorem} to the pair $(-A+K+\nu I, B)$ and the form $\mathcal{F}$. Thus, there exists an operator $P \in \mathcal{L}(\mathbb{H}_{\alpha})$, self-adjoint in $\mathbb{H}_{\alpha}$ and such that
	\begin{equation}
		\label{EQ: ParabApplicationFreqThFirstIneq}
		e^{2\nu r} V(w(r)) - V(w(0)) + \int_{0}^{r}e^{2\nu s}\mathcal{F}(w(s),\xi(s))ds \leq -\delta \int_{0}^{r}e^{2\nu s} |w(s)|^{2}_{\alpha}ds
	\end{equation}
    holds for any solution $w(\cdot)=w(\cdot,w_{0},\xi)$ to $\dot{w}(t)=(-A+K)w(t) + B\xi(t)$ such that $\xi(\cdot) \in L_{2}(0,T;\Xi)$ and $w(\cdot) \in C([0,T];\mathbb{H}_{\alpha})$. 
    
    Let $w(t)=v_{1}(t)-v_{2}(t)$ be the difference of two mild solutions $v_{1}(t) = v(t,0,v_{0,1})$ and $v_{2}(t) = v(t,0,v_{0,2})$ of \eqref{EQ: ParabolicNonAutEquation} corresponding to some $v_{0,1},v_{0,2} \in \mathbb{H}_{\alpha}$ and $\xi(t) := F(t,Cv_{1}(t))-F(t,Cv_{2}(t))$. Then we have $F(w(s),\xi(s)) \geq 0$ for all $s \geq 0$ and, consequently,
    \begin{equation}
    	\label{EQ: ParabApplicationFreqThSecondIneq}
    	e^{2\nu r} V(v_{1}(r)-v_{2}(r)) - V(v_{1}(0)-v_{2}(0)) \leq -\delta \int_{0}^{r}e^{2\nu s} |v_{1}(s)-v_{2}(s)|^{2}_{\alpha}ds.
    \end{equation}
    From this and the cocycle property, we have \textbf{(H3)} satisfied.
    
    Now let $\mathbb{H}^{+}_{\alpha}:=\mathbb{H}^{s}_{\alpha}(\nu)$ and $\mathbb{H}^{-}_{\alpha}:=\mathbb{H}^{u}_{\alpha}(\nu)$. By our assumptions, we have $\operatorname{dim}\mathbb{H}^{-}_{\alpha}=j$. Since $\mathcal{F}(w(\cdot),0) \geq 0$, from \eqref{EQ: ParabApplicationFreqThFirstIneq} we also have
    \begin{equation}
    	V(v(T))-V(v(0)) \leq -\delta \int_{0}^{T}|v(s)|^{2}_{\alpha} ds
    \end{equation}
    for any $T>0$ and any solution $v(\cdot)$ of the linear equation $\dot{v}(t)=(-A+K+\nu I)v(t)$ with $v(0) \in \mathbb{H}_{\alpha}$. Now we can apply Theorem \ref{TH: HighRankConesTheorem} to get the required sign properties of $P$ on $\mathbb{H}^{+}_{\alpha}$ and $\mathbb{H}^{-}_{\alpha}$. The proof is finished.
\end{proof}

In the case $\nu > 0$ the properties established in Theorem \ref{TH: ParabEqsMainAssumptionsVer} along with the mentioned Lipschitzity and compactness properties guarantee (if there exists at least one bounded on $\mathbb{R}$ solution) the existence of an invariant family of $j$-dimensional Lipschitz submanifolds in $\mathbb{H}_{\alpha}$, which forms the inertial manifold for the cocycle (see \cite{Anikushin2020Red} and \cite{Anikushin2020Geom}). Properties of these manifolds such as Lipschitzity, exponential tracking (the rate is determined by the exponent $\nu >0$), $C^{1}$-differentiability and normal hyperbolicity can be also established with the aid of abstract quadratic functionals (see \cite{Anikushin2020Geom}). 

It is known that the $C^{1}$-differentiability of inertial manifolds is the maximum\footnote{Note that in some cases \cite{Zelik2014,KostiankoZelikSA2020} it is also possible to obtain the $C^{1+\varepsilon}$-differentiability for a small $\varepsilon > 0$.} that can be obtained under the Spectral Gap Condition (see Subsection \ref{SUBSEC: ParabSpectralGapCond}) and for differentiability of higher orders stronger assumptions are required \cite{ChowLuSell1992}. We do not know how to express these stronger assumptions in terms of \eqref{EQ: FreqParabGeneral}, \eqref{EQ: FreqConditionFormGeneral} or \textbf{(H3)}.

\subsection{\textbf{Optimality of the frequency inequality}}
\label{SUBSEC: OptimalityParab}
The following theorem follows from the so-called theorem on the losslessness of the $S$-procedure \cite{Gelig1978}. It states, roughly speaking, that after the passing from \eqref{EQ: ParabApplicationFreqThFirstIneq} to \eqref{EQ: ParabApplicationFreqThSecondIneq} by eliminating $\mathcal{F}$, we do not loss much information so the frequency inequality remains optimal (in the class of systems) even for condition \textbf{(H3)} obtained after such an elimination. For simplicity and needs for further discussions, we state it for the frequency inequality from \eqref{EQ: FreqParabGeneral}.
\begin{theorem}
	\label{TH: ParabOptimalityTh}
	Let $-A \colon \mathbb{H} \to \mathbb{H}$ be a sectorial operator with compact resolvent, $B \colon \Xi \to \mathbb{H}$ and $C \colon \mathbb{H}_{\alpha} \to \mathbb{M}$ be bounded. Let $\Lambda>0$ be fixed and $\nu \in \mathbb{R}$ be such that $A$ does not have spectrum on the line $-\nu + i \mathbb{R}$. Suppose also that the subspace $\Xi_{0} := B^{-1}(\mathbb{H}_{\alpha})$ is dense in $\Xi$. Then the frequency inequality 
	\begin{equation}
		\label{EQ: FreqIneqSimple}
		\|C(A-pI)^{-1}B\|_{\Xi^{\mathbb{C}} \to \mathbb{M}^{\mathbb{C}}} < \Lambda^{-1} \text{ for all } p=-\nu + i\omega \text{, where } \omega \in \mathbb{R}
	\end{equation}
    is equivalent to the existence of $P \in \mathcal{L}(\mathbb{H}_{\alpha})$, which is self-adjoint in $\mathbb{H}_{\alpha}$, and $\delta>0$ such that the inequality
    \begin{equation}
    	(Pv,(A+\nu I)v+B\xi)_{\alpha} \leq -\delta |v|^{2}_{\alpha}
    \end{equation}
    is satisfied for all $v \in \mathbb{H}_{1+\alpha}$ and $\xi \in \Xi_{0}$ such that $|\xi|^{2}_{\Xi} \leq \Lambda^{2} |C v^{2}|_{\alpha}$.
\end{theorem}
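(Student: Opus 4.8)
The plan is to deduce this from the $S$-procedure losslessness theorem (see \cite{Gelig1978}) combined with the Frequency Theorem, Theorem \ref{TH: ParabRealFreqTheorem}, applied to the pair $(A + \nu I, B)$ and the quadratic form $\mathcal{F}(v,\xi) := \Lambda^{2}|Cv|^{2}_{\mathbb{M}} - |\xi|^{2}_{\Xi}$. The first step is to reformulate both sides of the claimed equivalence in terms of $\mathcal{F}$. On the one hand, since $C^{*}C \in \mathcal{L}(\mathbb{H}_{\alpha})$ and $(Cv,Cv)_{\mathbb{M}} = (C^{*}Cv, v)_{\alpha}$, the Hermitian extension $\mathcal{F}^{\mathbb{C}}$ has the standard form \eqref{EQ: QuadraticFormComplex}, and (as already noted in the text before \eqref{EQ: FreqParabGeneral}) the frequency-domain condition of item 1 of Theorem \ref{TH: ParabRealFreqTheorem}, namely $\mathcal{F}^{\mathbb{C}}(-(A^{\mathbb{C}}+\nu I - i\omega I)^{-1}B^{\mathbb{C}}\xi, \xi) \leq -\delta'|\xi|^{2}_{\Xi^{\mathbb{C}}}$, is precisely the strict frequency inequality \eqref{EQ: FreqIneqSimple} (up to the harmless sign $p = -\nu + i\omega$ versus the resolvent of $A+\nu I$). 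On the other hand, the pointwise inequality $(Pv, (A+\nu I)v + B\xi)_{\alpha} \leq -\delta|v|^{2}_{\alpha}$ under the constraint $|\xi|^{2}_{\Xi} \leq \Lambda^{2}|Cv|^{2}_{\alpha}$ is exactly the statement that the quadratic form
\begin{equation}
(v,\xi) \mapsto 2\operatorname{Re}(Pv, (A+\nu I)v + B\xi)_{\alpha} + \mathcal{F}(v,\xi)
\end{equation}
is negative definite (bounded above by $-\delta|v|^{2}_{\alpha}$, or even $-\delta(|v|^{2}_{\alpha} + |\xi|^{2}_{\Xi})$ after absorbing, since on the constraint set $|\xi|_{\Xi}$ is controlled by $|v|_{\alpha}$) on the subspace $\{(v,\xi) : \mathcal{F}(v,\xi) \geq 0\}$ — this is the $S$-procedure trading of the constraint $\mathcal{F} \geq 0$ for a single multiplier, which happens to be lossless here.

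For the direction "frequency inequality $\Rightarrow$ existence of $P$": apply Theorem \ref{TH: ParabRealFreqTheorem} to $(A + \nu I, B)$ and $\mathcal{F}$. One must first check its hypotheses: that $A + \nu I$ generates a $C_{0}$-semigroup in $\mathbb{H}_{\alpha}$ and that \textbf{(RES)} holds for $A+\nu I$. Both follow from the sectoriality of $-A$ with compact resolvent together with the assumption that $A$ has no spectrum on $-\nu + i\mathbb{R}$ — this is exactly the content of items 1)–3) of Theorem \ref{TH: ParabLinPertTheorem} (here with $K = 0$). Theorem \ref{TH: ParabRealFreqTheorem} then yields $P \in \mathcal{L}(\mathbb{H}_{\alpha})$, self-adjoint in $\mathbb{H}_{\alpha}$, and $\delta > 0$ with the integral inequality \eqref{EQ: RealFreqThInequality} for all solutions $v(\cdot) = v(\cdot, v_{0}, \xi)$ of $\dot v = (A+\nu I)v + B\xi$ with $v(\cdot) \in C([0,T];\mathbb{H}_{\alpha})$. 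To pass to the pointwise form, I differentiate at $T = 0$: take $\xi(\cdot) \equiv \xi_{0} \in \Xi_{0}$ and $v_{0} \in \mathbb{H}_{1+\alpha}$ with $(A+\nu I)v_{0} + B\xi_{0} \in \mathbb{H}_{1+\alpha}$, invoke Lemma \ref{LEM: ConstantDifferentiationProperty} (as in the proof of Theorem \ref{TH: OptimalProcessParab}) to get $\mathbb{H}_{\alpha}$-differentiability of $v(\cdot)$ at $0$, divide \eqref{EQ: RealFreqThInequality} by $T$ and let $T \to 0^{+}$, obtaining $2\operatorname{Re}(Pv_{0}, (A+\nu I)v_{0} + B\xi_{0})_{\alpha} + \mathcal{F}(v_{0},\xi_{0}) \leq -\delta(|v_{0}|^{2}_{\alpha} + |\xi_{0}|^{2}_{\Xi})$; a density argument (the set of such "nice" $v_{0}$ is dense in $\mathbb{H}_{1+\alpha}$, and $\Xi_{0}$ is dense in $\Xi$ by hypothesis) extends this to all $v \in \mathbb{H}_{1+\alpha}$, $\xi \in \Xi_{0}$. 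Finally, restricting to the constraint set $|\xi|^{2}_{\Xi} \leq \Lambda^{2}|Cv|^{2}_{\alpha}$, where $\mathcal{F}(v,\xi) = \Lambda^{2}|Cv|^{2}_{\alpha} - |\xi|^{2}_{\Xi} \geq 0$, we get $(Pv, (A+\nu I)v + B\xi)_{\alpha} \leq -\tfrac{\delta}{2}|v|^{2}_{\alpha}$, which is the desired inequality (with $\delta$ replaced by $\delta/2$).

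For the converse, "existence of $P$ $\Rightarrow$ frequency inequality": this is the easy direction and does not need the $S$-procedure. Suppose $v, \xi$ satisfy $i\omega v = (A+\nu I)v + B\xi$ with $v \in \mathbb{H}_{1+\alpha}$, $\xi \in \Xi_{0}$. Then $2\operatorname{Re}(Pv, (A+\nu I)v + B\xi)_{\alpha} = 2\operatorname{Re}(Pv, i\omega v)_{\alpha} = 2\operatorname{Re}\big(i\omega (Pv, v)_{\alpha}\big) = 0$ because $P$ is self-adjoint and $(Pv,v)_{\alpha}$ is real. Hence, plugging such $(v,\xi)$ into the pointwise inequality — which, as noted, holds on the whole constraint set — forces $\mathcal{F}(v,\xi) < 0$ whenever the inequality's hypothesis $|\xi|^{2}_{\Xi} \leq \Lambda^{2}|Cv|^{2}_{\alpha}$ is not met; more precisely the standard $S$-procedure quadratic argument shows $0 = 2\operatorname{Re}(Pv,i\omega v)_{\alpha} + \mathcal{F}(v,\xi) - \mathcal{F}(v,\xi) \leq -\delta|v|^{2}_{\alpha} - \mathcal{F}(v,\xi)$ cannot hold unless $\mathcal{F}(v,\xi) < -\delta|v|^{2}_{\alpha}$ for all such resonant pairs with $v \neq 0$, i.e. $|\xi|^{2}_{\Xi} > \Lambda^{2}|Cv|^{2}_{\alpha} + \delta|v|^{2}_{\alpha}$; substituting $v = -(A + \nu I - i\omega I)^{-1}B\xi = (i\omega I - A - \nu I)^{-1}B\xi$ and using $Cv = -C(A-pI)^{-1}B\xi$ with $p = -\nu + i\omega$ yields $|C(A-pI)^{-1}B\xi|^{2}_{\mathbb{M}} < \Lambda^{-2}|\xi|^{2}_{\Xi}$ for all $\xi \in \Xi_{0}$ and, by density of $\Xi_{0}$ and continuity, for all $\xi \in \Xi^{\mathbb{C}}$, which is \eqref{EQ: FreqIneqSimple}. (One must check that a uniform strict gap, not merely a pointwise strict inequality, is obtained; this follows because $\delta > 0$ is fixed and, by item 3) of Theorem \ref{TH: ParabLinPertTheorem}, $\|C(A-pI)^{-1}B\| \to 0$ as $|\omega| \to \infty$, so the supremum over $\omega$ is attained on a compact set.)

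I expect the main obstacle to be the careful handling of the $S$-procedure losslessness: one must verify that the family of pairs $(v,\xi)$ solving the resonance equation $i\omega v = (A+\nu I)v + B\xi$ is rich enough (parametrized by all $\xi \in \Xi_{0}$ and $\omega \in \mathbb{R}$, via the resolvent, using \textbf{(RES)}) that the losslessness theorem from \cite{Gelig1978} applies, and that the density hypothesis on $\Xi_{0} = B^{-1}(\mathbb{H}_{\alpha})$ is exactly what is needed to close the gap between the constraint set used in the statement and the full space $\Xi$ appearing in the frequency inequality. The differentiation-at-zero step and the density/closure arguments are routine given Lemma \ref{LEM: ConstantDifferentiationProperty} and Theorem \ref{TH: ParabLinPertTheorem}.
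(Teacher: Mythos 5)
Your forward direction (frequency inequality $\Rightarrow$ existence of $P$) is essentially the paper's argument: apply Theorem \ref{TH: ParabRealFreqTheorem} to $(A+\nu I,B)$ with $\mathcal{F}(v,\xi)=\Lambda^{2}|Cv|^{2}_{\mathbb{M}}-|\xi|^{2}_{\Xi}$ (checking the hypotheses via Theorem \ref{TH: ParabLinPertTheorem}), differentiate the integral inequality at $T=0$ using Lemma \ref{LEM: ConstantDifferentiationProperty}, and then restrict to the constraint set where $\mathcal{F}\geq 0$. That part is correct and matches the paper, which phrases it as ``frequency inequality $\Leftrightarrow$ \textbf{(G2)} with $\tau=1$'' followed by the trivial implication \textbf{(G2)} $\Rightarrow$ \textbf{(G1)}.

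The converse contains a genuine gap. You assert it is ``the easy direction and does not need the $S$-procedure'', but your key chain $0=2\operatorname{Re}(Pv,i\omega v)_{\alpha}+\mathcal{F}(v,\xi)-\mathcal{F}(v,\xi)\leq-\delta|v|^{2}_{\alpha}-\mathcal{F}(v,\xi)$ silently invokes the \emph{unconstrained} multiplier inequality $2\operatorname{Re}(Pv,(A+\nu I)v+B\xi)_{\alpha}+\mathcal{F}(v,\xi)\leq-\delta|v|^{2}_{\alpha}$, which is not your hypothesis: the hypothesis only controls $(Pv,(A+\nu I)v+B\xi)_{\alpha}$ on the constraint set $|\xi|^{2}_{\Xi}\leq\Lambda^{2}|Cv|^{2}_{\mathbb{M}}$. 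What the constrained hypothesis gives directly, via the resonance pairs $v=-(A-pI)^{-1}B\xi$, is only that no such pair with $\xi\neq 0$ lies in the constraint set, i.e. the pointwise strict inequality $|C(A-pI)^{-1}B\xi|_{\mathbb{M}}<\Lambda^{-1}|\xi|_{\Xi}$; in infinite dimensions this yields $\|C(A-pI)^{-1}B\|\leq\Lambda^{-1}$ but not the strict operator-norm bound \eqref{EQ: FreqIneqSimple}, because the supremum over the unit sphere of $\Xi^{\mathbb{C}}$ need not be attained. Your compactness remark fixes uniformity in $\omega$ only, not in $\xi$ for fixed $\omega$, which is where the problem lies. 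The missing ingredient is exactly the losslessness of the $S$-procedure (Theorem 1 of \cite{Yakubovich1973Sproc}, resting on Dines' convexity theorem): it upgrades the constrained inequality (the paper's \textbf{(G1)}) to the existence of a multiplier $\tau\geq 0$ with $\mathcal{G}_{0}(v,\xi)-\tau\mathcal{G}(v,\xi)\leq 0$ for all $(v,\xi)$ (\textbf{(G2)}); only then does substituting the resonance pairs produce the uniform gap $\Lambda^{2}|Cv|^{2}_{\mathbb{M}}\leq(1-\delta/\tau)|\xi|^{2}_{\Xi}$, hence \eqref{EQ: FreqIneqSimple}. This is precisely the step the paper identifies as the only nontrivial part of the proof, and it cannot be bypassed by the direct argument you give.
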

\begin{proof}
Let us put $\mathcal{G}_{0}(v,\xi) := 2(Pv,(A+\nu I)v+B\xi)_{\alpha} + \delta (|v|^{2}_{\alpha} + |\xi|^{2}_{\Xi})$ and $\mathcal{G}(v,\xi) = |\xi|^{2}_{\Xi} - \Lambda^{2}|C v|^{2}_{\alpha}$. Then the problem is whether the conditions
\begin{description}
	\item[\textbf{(G1)}] $\mathcal{G}_{0}(v,\xi) \leq 0$ for all $v \in \mathbb{H}_{1+\alpha}$, $\xi \in \Xi_{0}$ such that $\mathcal{G}(v,\xi) \leq 0$.
	\item[\textbf{(G2)}] There exists $\tau \geq 0$ such that $\mathcal{G}_{0}(v,\xi) - \tau \mathcal{G}(v,\xi) \leq 0$ for all $v \in \mathbb{H}_{1+\alpha}$, $\xi \in \Xi_{0}$.
\end{description}
are equivalent. Indeed, by Theorem \ref{TH: ParabRealFreqTheorem} (applied to the pair $(A+\nu I, B)$ and the form $\mathcal{F} = -\mathcal{G}$) the frequency inequality from \eqref{EQ: FreqIneqSimple} is equivalent to \textbf{(G2)} with $\tau=1$. To see this one should consider \eqref{EQ: RealFreqThInequality} for $\xi(\cdot) \equiv \xi_{0} \in \Xi_{0}$ and $v(\cdot)=v(\cdot,v_{0},\xi)$ with $v_{0} \in \mathbb{H}_{1+\alpha}$. Then $v(\cdot)$ is a continuously differentiable $\mathbb{H}_{\alpha}$-valued function and \textbf{(G2)} with $\tau = 1$ follows after dividing \eqref{EQ: RealFreqThInequality} by $T>0$ and taking it to the limit as $T \to 0+$. The necessity of the frequency inequality \eqref{EQ: FreqIneqSimple} for \textbf{(G2)} follows by considering $v=-(A+ (\nu - i\omega)I)^{-1}B\xi$ for $\xi \in \Xi^{\mathbb{C}}_{0}$ (after taking complexifications).

It is clear that from \textbf{(G2)} we immediately get \textbf{(G1)}. So, the only non-trivial part is how to get \textbf{(G2)} from \textbf{(G1)}. This follows\footnote{Note that the proof is based on the fact that the image of $\mathbb{H}_{1+\alpha} \times \Xi_{0}$ in $\mathbb{R}^{2}$ under the map $(v,\xi) \mapsto (\mathcal{G}(v,\xi),\mathcal{G}_{0}(v,\xi))$ is convex (a theorem of L.~L.~Dines). This fact is purely algebraic and require neither completeness of the spaces nor boundedness of the quadratic forms.} from Theorem 1 in \cite{Yakubovich1973Sproc} or Theorem 2.17 in \cite{Gelig1978} (although the latter is stated for Euclidean spaces only, the used arguments can be easily adapted for our case). So, the proof is finished.
\end{proof}

An immediate corollary of Theorem \ref{TH: ParabOptimalityTh} shows that \eqref{EQ: FreqIneqSimple} is necessary and sufficient for the existence of a common quadratic functional in the class of linear perturbations as follows. Of course, the only interesting part is the necessity.
\begin{corollary}
	In terms of Theorem \ref{TH: ParabOptimalityTh} the frequency inequality \eqref{EQ: FreqIneqSimple} is necessary and sufficient for the existence of $P \in \mathcal{L}(\mathbb{H}_{\alpha})$, which is self-adjoint in $\mathbb{H}_{\alpha}$, and $\delta>0$ such that the inequality
	\begin{equation}
		(Pv,(A + BMC + \nu I)v)_{\alpha} \leq -\delta |v|^{2}_{\alpha}
	\end{equation}
    is satisfied for any $v \in \mathbb{H}_{1+\alpha}$ and any $M \in \mathcal{L}(\mathbb{M};\Xi)$ with $\| M \| \leq \Lambda$.
\end{corollary}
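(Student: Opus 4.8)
The plan is to obtain both implications directly from Theorem \ref{TH: ParabOptimalityTh}, the bridge being the elementary observation that, for a fixed $v \in \mathbb{H}_{1+\alpha}$, the image set $\{\, MCv \ : \ M \in \mathcal{L}(\mathbb{M};\Xi),\ \|M\| \leq \Lambda \,\}$ coincides with the ball $\{\, \xi \in \Xi \ : \ |\xi|_{\Xi} \leq \Lambda |Cv|_{\mathbb{M}} \,\}$ when $Cv \neq 0$ (and equals $\{0\}$ when $Cv=0$). In other words, the family of operator Lyapunov inequalities indexed by the multiplier $M$ is, pointwise in $v$, exactly the single quadratic inequality with the constraint $|\xi|_{\Xi} \leq \Lambda|Cv|_{\mathbb{M}}$ that already appears in Theorem \ref{TH: ParabOptimalityTh}. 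Note that the realification issue — the passage between complex and real multipliers — need not be treated here at all, since it is already absorbed into Theorem \ref{TH: ParabOptimalityTh}, hence into Theorem \ref{TH: ParabRealFreqTheorem}.

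For sufficiency, assume \eqref{EQ: FreqIneqSimple} and let $P$ and $\delta>0$ be the operator and constant supplied by Theorem \ref{TH: ParabOptimalityTh}. Given any $M \in \mathcal{L}(\mathbb{M};\Xi)$ with $\|M\| \leq \Lambda$ and any $v \in \mathbb{H}_{1+\alpha}$, set $\xi := MCv$; then $|\xi|_{\Xi} \leq \|M\|\,|Cv|_{\mathbb{M}} \leq \Lambda|Cv|_{\mathbb{M}}$, and $\xi \in \Xi_{0}$ precisely when the expression $(Pv,(A+BMC+\nu I)v)_{\alpha}$ is meaningful, i.e. when $BMCv \in \mathbb{H}_{\alpha}$. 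Applying the constrained inequality of Theorem \ref{TH: ParabOptimalityTh} to this pair $(v,\xi)$ gives $(Pv,(A+BMC+\nu I)v)_{\alpha} = (Pv,(A+\nu I)v + B\xi)_{\alpha} \leq -\delta|v|^{2}_{\alpha}$, which is the asserted estimate. For necessity, suppose conversely that some self-adjoint $P \in \mathcal{L}(\mathbb{H}_{\alpha})$ and $\delta>0$ satisfy $(Pv,(A+BMC+\nu I)v)_{\alpha} \leq -\delta|v|^{2}_{\alpha}$ for every $M$ with $\|M\| \leq \Lambda$ and every $v \in \mathbb{H}_{1+\alpha}$; I claim the same $P,\delta$ verify the hypothesis of Theorem \ref{TH: ParabOptimalityTh}. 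Fix $v \in \mathbb{H}_{1+\alpha}$ and $\xi \in \Xi_{0}$ with $|\xi|_{\Xi} \leq \Lambda|Cv|_{\mathbb{M}}$. If $Cv \neq 0$, take the rank-one operator $M := |Cv|_{\mathbb{M}}^{-2}(\,\cdot\,,Cv)_{\mathbb{M}}\,\xi \in \mathcal{L}(\mathbb{M};\Xi)$, so that $MCv = \xi$ and $\|M\| = |\xi|_{\Xi}/|Cv|_{\mathbb{M}} \leq \Lambda$; the assumed inequality for this $M$ then reads $(Pv,(A+\nu I)v + B\xi)_{\alpha} \leq -\delta|v|^{2}_{\alpha}$. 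If $Cv = 0$, the constraint forces $\xi = 0$, and the assumed inequality with $M = 0$ gives the same conclusion. Hence Theorem \ref{TH: ParabOptimalityTh} applies and returns \eqref{EQ: FreqIneqSimple}.

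Thus the proof is essentially a one-line application of Theorem \ref{TH: ParabOptimalityTh} once the rank-one multiplier trick is in place; there is no serious obstacle. The only point that genuinely deserves care is the standing interpretation of the symbol $(Pv,(A+BMC+\nu I)v)_{\alpha}$ for an arbitrary bounded multiplier $M$, since $B$ maps into $\mathbb{H}$ and not into $\mathbb{H}_{\alpha}$: I would make explicit that it is to be read on the domain $\mathbb{H}_{1+\alpha}$ of the generator of the analytic semigroup on $\mathbb{H}_{\alpha}$ associated with $A+BMC$ — which exists by the perturbation argument of Theorem \ref{TH: ParabLinPertTheorem}, $BMC$ being relatively bounded with respect to $A$ via the interpolation inequality — and, if preferred, to invoke the density of $\Xi_{0}$ in $\Xi$ to pass to this case from $\xi \in \Xi_{0}$. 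No other difficulty arises, and the "interesting" necessity direction is handled uniformly with the trivial sufficiency direction.
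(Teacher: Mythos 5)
Your proposal is correct and follows essentially the same route as the paper: both reduce the corollary to Theorem \ref{TH: ParabOptimalityTh} by showing that the set of pairs $(v,\xi)$ with $|\xi|_{\Xi}\leq\Lambda|Cv|_{\mathbb{M}}$ coincides with the set of pairs $(v,MCv)$ over $\|M\|\leq\Lambda$, using exactly the same rank-one multiplier $My=\xi\,(Cv,y)_{\mathbb{M}}/(Cv,Cv)_{\mathbb{M}}$, and both justify $MCv\in\Xi_{0}$ for $v\in\mathbb{H}_{1+\alpha}$ via the sectoriality/perturbation argument of Theorem \ref{TH: ParabLinPertTheorem}. Your explicit treatment of the degenerate case $Cv=0$ is a minor addition the paper leaves implicit.
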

\begin{proof}
	Let $\mathcal{S}_{1}$ (resp. $\mathcal{S}_{2}$) denote the sets of pairs $(v,\xi) \in \mathbb{H}_{1+\alpha} \times \Xi_{0}$ such that $|\xi|_{\Xi} \leq \Lambda |Cv|_{\alpha}$ (resp. $\xi = M C v$ for some $M \in \mathcal{L}(\mathbb{M};\Xi)$ with $\|M\| \leq \Lambda$). Note that since $-A-BMC$ is sectorial (see item 2 of Theorem \ref{TH: ParabLinPertTheorem}), we have $BMCv \in \mathbb{H}_{\alpha}$ and, consequently, $MC v \in \Xi_{0}$ whenever $v \in \mathbb{H}_{1+\alpha}$. In virtue of Theorem \ref{TH: ParabOptimalityTh}, for our purposes it is sufficient to show that $\mathcal{S}_{1} = \mathcal{S}_{2}$. It is clear that $\mathcal{S}_{2} \subset \mathcal{S}_{1}$. Let $(v,\xi) \in \mathcal{S}_{1}$. For $y \in \mathbb{M}$ we define $M \in \mathcal{L}(\mathbb{M};\Xi)$ as
	\begin{equation}
		My := \xi\frac{(Cv,y)_{\mathbb{M}}}{(Cv,Cv)_{\mathbb{M}}}.
	\end{equation}
    It is clear that $\xi = MCv$ and $\|M\| \leq \Lambda$. Thus, $\mathcal{S}_{1} = \mathcal{S}_{2}$ and the proof is finished.
\end{proof}
So, in terms of semi-dichotomies, the frequency inequality \eqref{EQ: FreqIneqSimple} is equivalent to that any system in the class of linear perturbations admits a semi-dichotomy, which is in some sense ``strongly uniform'' in the class (see Remark \ref{REM: UniformAndKalman}). It is interesting when such a kind of uniformity is equivalent to its ``nonuniform'' analog as in the following problem.
\begin{problem}
	\label{PROB: NonuniformSemiDichotomies}
	In terms of Theorem \ref{TH: ParabOptimalityTh} find conditions on $A,B,C$ such that the frequency inequality \eqref{EQ: FreqIneqSimple} is necessary for the property that any operator $A+BMC$, where $M \in \mathcal{L}(\mathbb{M};\Xi)$ with $\| M \| \leq \Lambda$ is arbitrary, has the same number of eigenvalues as $A$ located to the right of the line $-\nu +i \mathbb{R}$ and has no eigenvalues on this line.
\end{problem}
Usually, optimal conditions for the existence of inertial manifolds are obtained as a positive solution to Problem \ref{PROB: NonuniformSemiDichotomies} in the considered class of equations. This includes the classical case when $A$ is self-adjoint and $B=C=I$ (with the spaces $\Xi=\mathbb{M}= \mathbb{H}$) studied by A.~V.~Romanov in \cite{Romanov1994}, where, however, the optimality is understood in a bit weaker sense since in \cite{Romanov1994} we are also allowed to vary the linear operator $A$ (it is not hard to see that we may keep it fixed). Another optimality result was recently obtained by A.~Kostianko and S.~Zelik in \cite{KostiankoZelik2019Kwak}, where the frequency inequality \eqref{EQ: FreqIneqSimple} is verified for a special class of non-self-adjoint operators $A$ obtained trough the so-called Kwak transform and it is also shown that Problem \ref{PROB: NonuniformSemiDichotomies} has a positive solution in this class. Note also that in \cite{KostiankoZelik2019Kwak} the operators $B$ and $C$ are non-trivial.  It seems also that the optimality result obtained in V.~V.~Chepyzhov, A.~Kostianko and S.~Zelik \cite{ChepyzhovKostiankoZelik2019} for hyperbolic relaxations of an abstract semilinear parabolic equation also gives positive solution to an analog of Problem \ref{PROB: NonuniformSemiDichotomies} and follows the same scheme as \cite{KostiankoZelik2019Kwak} by verifying the frequency inequality \eqref{EQ: FreqIneqSimple} at first\footnote{As we have noted in the introduction our abstract version of the Frequency Theorem is also applicable for hyperbolic equations since it only assumes the $C_{0}$-semigroup property. But we cannot study inertial manifolds for such equations by means of our theory \cite{Anikushin2020Geom} due to the compactness assumption.}.

\begin{remark}
\label{REM: UniformAndKalman}
In general, the conclusion of Problem \ref{PROB: NonuniformSemiDichotomies} for general operators $A,B,C$ fails to hold. Counterexamples arise even in finite-dimensions as counterexamples to the Kalman conjecture in stability theory (see, the review of G.~A.~Leonov and N.~V.~Kuznetsov \cite{LeoKuz2013Hidden} or I.~M.~Boiko et. al \cite{BoikoKuzetall2020}). Such counterexamples show that there exist systems such that $A+BMC$ has only eigenvalues with negative real parts for any $\|M\| \leq \Lambda$, but the frequency inequality is not satisfied. Indeed, if the frequency inequality is satisfied, then we would have a common positive-definite quadratic functional\footnote{In finite-dimensions the existence of a common positive-definite quadratic Lyapunov functional is equivalent to the existence of a common ellipsoid such that trajectories of any system cross it strictly inwards.} for the class, which includes nonlinear systems also. This implies that the zero solution is globally exponentially stable for all the nonlinear problems (we assume $F(0)=0$). But in the counterexamples there exist nonlinearities from the class such that the corresponding system has periodic orbits. Thus, in the case when there is no uniformity given by the frequency inequality, the overall dynamics may strongly depend on the choice of the nonlinearity $F$ and, consequently, the problem for the class of perturbations become unsolvable.

So, in general we have only the ``uniform'' optimality stated in Theorem \ref{TH: ParabOptimalityTh} and when this uniformity is broken, we cannot guarantee anything common for dynamics in the class of systems.
\end{remark}

Now we are going to discuss some particular cases of the frequency-domain condition in \eqref{EQ: FreqParabGeneral} and its modifications.

\subsection{\textbf{Spectral Gap Condition}}
\label{SUBSEC: ParabSpectralGapCond}
Let us consider the special case of $K=0$; $\Xi := \mathbb{H}_{\beta}$ for some $\beta \in [0,\alpha]$; $\mathbb{M}:=\mathbb{H}_{\alpha}$ and $C$, $B$ being the identity operators. As above, let $0 < \lambda_{1} \leq \lambda_{2} \leq \ldots$ be the eigenvalues of $A$. Let us fix $j$ such that $\lambda_{j+1}-\lambda_{j}>0$ and find $\nu \in (\lambda_{j},\lambda_{j+1})$ such that
\begin{equation}
\label{EQ: FrequencyConditionParabBefore}
\| (A+(-\nu + i \omega)I)^{-1} \|_{\mathbb{H}_{\beta} \to \mathbb{H}_{\alpha}} < \Lambda^{-1}, \text{ for all } \omega \in \mathbb{R}.
\end{equation}
Using the orthogonal basis corresponding to $\lambda_{k}$'s, one can easily show that
\begin{equation}
\| (A - (\nu + i \omega)I)^{-1} \|_{\mathbb{H}_{\beta} \to \mathbb{H}_{\alpha}} = \sup_{k} \frac{\lambda^{\alpha-\beta}_{k} }{|\lambda_{k} - \nu - i\omega|} \leq \sup_{k}\frac{\lambda^{\alpha-\beta}_{k} }{|\lambda_{k} - \nu|}.
\end{equation}
Moreover, due to some monotonicity, we have
\begin{equation}
\label{EQ: ParabSpectralNormMax}
\sup_{k}\frac{\lambda^{\alpha-\beta}_{k}}{ |\lambda_{k} - \nu| } = \max\left\{ \frac{\lambda^{\alpha-\beta}_{j}}{\nu - \lambda_{j}}, \frac{\lambda^{\alpha-\beta}_{j+1}}{\lambda_{j+1} - \nu} \right\}
\end{equation}
and the norm will be the smallest possible if the values under the maximum in \eqref{EQ: ParabSpectralNormMax} coincide. So, for this one should take
\begin{equation}
\nu = \frac{\lambda^{\alpha-\beta}_{j+1}}{\lambda^{\alpha-\beta}_{j}+\lambda^{\alpha-\beta}_{j+1}} \cdot \lambda_{j} + \frac{\lambda^{\alpha-\beta}_{j}}{\lambda^{\alpha-\beta}_{j}+\lambda^{\alpha-\beta}_{j+1}} \cdot \lambda_{j+1} \in ( \lambda_{j},\lambda_{j+1} ).
\end{equation}
For such a choice of $\nu$ the frequency condition \eqref{EQ: FrequencyConditionParabBefore} takes the form
\begin{equation}
\label{EQ: SpectralGapConditionParab}
\frac{ \lambda_{j+1} - \lambda_{j} }{\lambda^{\alpha-\beta}_{j} + \lambda^{\alpha-\beta}_{j+1} } > \Lambda,
\end{equation}
known as the Spectral Gap Condition, which guarantees the existence of $j$-dimensional inertial manifolds for semilinear parabolic equations. We should immediately note here that the pioneering paper of C.~Foias, G.~R.~Sell and R.~Temam \cite{FoiasSellTemam1988} and some other works (for example, G.~R.~Sell and Y.~You\cite{SellYou2002}) contain non-optimal constants in this condition. Instead of the Lipschitz constant $\Lambda$, there are various (sometimes unefficient) constants, which appear as artifacts of the fixed point method and rough estimates. The first optimal versions appeared in the works of M.~Miklav\v{c}i\v{c} \cite{Miclavcic1991} and A.~V.~Romanov \cite{Romanov1994}.

\subsection{\textbf{Other frequency-domain conditions}}
\label{SUBSEC: ParabOtherFreqIneq}

For $\Xi = \mathbb{M} = \mathbb{H}$ and $B = I$ the frequency inequality \eqref{EQ: FreqParabGeneral} was used in the work of M.~Miklav\v{c}i\v{c} \cite{Miclavcic1991}, although his method seems to work in the case of non-trivial $B$ as well. Moreover, he also used a more general condition obtained through the inequality
\begin{equation}
	|F(t,y_{1})-F(t,y_{2})|_{\mathbb{H}} \leq \sum_{k=1}^{m} |C_{k}(y_{1}-y_{2})|_{\mathbb{H}} \text{ for all } y_{1},y_{2} \in \mathbb{H}_{\alpha} \text{ and } t \in \mathbb{R}
\end{equation}
for some operators $C_{1},\ldots,C_{m} \in \mathcal{L}(\mathbb{H}_{\alpha};\mathbb{H})$. For $m>1$ this inequality is not ``quadratic'' and, consequently, we cannot study the corresponding equation via the Frequency Theorem. However, if we change it to
\begin{equation}
	|F(y_{1})-F(y_{2})|^{2}_{\mathbb{H}} \leq \sum_{k=1}^{m} |C_{k}(y_{1}-y_{2})|^{2}_{\mathbb{H}} \text{ for all } y_{1},y_{2} \in \mathbb{H}_{\alpha} \text{ and } t \in \mathbb{R},
\end{equation}
we immediately get the quadratic form $\mathcal{F}(v,\xi) = \sum_{k=1}^{m}|C_{k}v|^{2}_{\mathbb{H}} - |\xi|^{2}_{\mathbb{H}}$ and the corresponding to it frequency inequality \eqref{EQ: FreqConditionFormGeneral}.

Somewhat weaker (non-optimal) conditions for the non-self-adjoint case were also obtained by G.~R.~Sell and Y.~You in \cite{SellYou1992}.

In fact, condition \eqref{EQ: SpectralGapConditionParab} is too rough to obtain concrete results in some situations. The presence in \eqref{EQ: FreqParabGeneral} of the measurement operator $C$ and of the control operator $B$ as well as consideration of the possibly non-self adjoint operator $-A+K$ in the linear part give a lot of flexibility. This intuition, which came from control theory, allowed R.~A.~Smith to obtain non-trivial results (especially concerned with generalizations of the Poincar\'{e}-Bendixson theory) by constructing $2$-dimensional inertial manifolds for certain ODEs, delay equations and parabolic problems. See, for example, \cite{Smith1994PB1,Smith1994PB2}, where applications to the FitzHugh-Nagumo system and the diffusive Goodwin system are given. However, he seemed to be unfamiliar with the Frequency Theorem and his methods were based on a priori estimates. This forced him to be restricted to concrete classes of systems. In particular, in \cite{Smith1994PB1,Smith1994PB2} only the case of reaction-diffusion equations ($\alpha = 0$) is treated and, moreover, there only domains in dimensions $2$ and $3$ are considered. Our approach, which is based on the use of quadratic Lyapunov functionals, does not have such limitations. Such a generalization of Smith's approach is presented by the author in \cite{Anik2020PB}.

\subsection{\textbf{The Circle Criterion}} Suppose the measurement operator $C$ is a linear functional (i.~e. $\mathbb{M} = \mathbb{R}$), $\Xi = \mathbb{R}$ and for some constants $\varkappa_{1} \leq \varkappa_{2}$ we have
\begin{equation}
	\varkappa_{1} \leq \frac{F(t,y_{1})-F(t,y_{2})}{y_{1}-y_{2}} \leq \varkappa_{2}
\end{equation}
satisfied for all $t \in \mathbb{R}$ and $y_{1},y_{2} \in \mathbb{R}$ such that $y_{1} \not= y_{2}$. Then an appropriate choice of the form $\mathcal{F}$ is $\mathcal{F}(v,\xi):=(\xi- \varkappa_{1} Cv) (\varkappa_{2}Cv - \xi)$. The corresponding frequency-domain condition takes the form
\begin{equation}
	\label{EQ: FreqCircleCriterionParab}
	\operatorname{Re}\left[(1+\varkappa_{1}W(p))^{*}(1+\varkappa_{2}W(p))\right] > 0 \text{ for } p= -\nu + i\omega, \text{ where } \omega \in \mathbb{R},
\end{equation}
which is known in the control theory as the Circle Criterion \cite{Gelig1978}. In the case $\varkappa_{1} > 0$ we no longer have the property $\mathcal{F}(v,0) \geq 0$, which was used in Theorem \ref{TH: ParabEqsMainAssumptionsVer} in order to establish the sign properties of the operator $P$ from the dichotomy properties of $-A+K+\nu I$. In this case one should use dichotomy properties of the operator $-A+K+\nu I + \varkappa_{0} BC$ for some $\varkappa_{0} \in [\varkappa_{1},\varkappa_{2}]$ since we have $\mathcal{F}(v,\varkappa_{0}Cv) \geq 0$. The circle criterion also gives some flexibility in applications, if it can be applied \cite{Anikushin2020Red,Anikushin2020FreqDelay,Gelig1978}.

\subsection{\textbf{An example}}
As an illustrative example we consider the following system with homogeneous Dirichlet boundary conditions given by
\begin{equation}
	\label{EQ: ParabNeumannExample1}
	\begin{split}
		u_{t}(t,x)&=u_{xx}(t,x) + w(t,x) + f_{1}(u(t,x)), x \in (0,1), t > 0, \\
		w_{t}(t,x)&=w_{xx}(t,x) - w(t,x) + f_{2}(u(t,x)), x \in (0,1), t > 0, \\
		u(t,0)&=u(t,1)=w(t,0)=w(t,1)=0, t > 0\\
		u(0,\cdot)&=u_{0}(\cdot) \in L_{2}(0,1),\\
		w(0,\cdot)&=w_{0}(\cdot) \in L_{2}(0,1).
	\end{split}
\end{equation}
It clearly can be written in the abstract form \eqref{EQ: ParabolicNonAutEquation} for $\alpha = 0$ with $\mathbb{H} = L_{2}(0,1; \mathbb{R}^{2})$; $-Av = (u_{xx} + w, w_{xx} - w)$ for $v=(u,w) \in \mathcal{D}(A) = W^{2,2}(0,1;\mathbb{R}^{2}) \cap H^{1}_{0}(0,1;\mathbb{R}^{2})$; $\Xi = L_{2}(0,1;\mathbb{R}^{2})$ and $B = I$; $\mathbb{M} = L_{2}(0,1)$ with $Cv = u$ for any $v=(u,w) \in \mathbb{H}$; $F(v)=(f_{1}(u),f_{2}(u))$ for $v=(u,w) \in \mathbb{H}$ and $W \equiv 0$.

Here we will show how to obtain conditions, which utilizes the specific structure of $F$ and which are sharper than the usual Spectral Gap Condition. For this we suppose that $f_{1}$ and $f_{2}$ are continuously differentiable and some constants $\mu_{1},\mu_{2},\varkappa_{1},\varkappa_{2} > 0$ we have $-\mu_{1} \leq f_{1}'(u) \leq \mu_{2}$ and $-\varkappa_{1} \leq f'_{2}(u) \leq \varkappa_{2}$ for all $u \in \mathbb{R}$.

Let us consider the quadratic form in $v=(u,w) \in \mathbb{H}$ and $\xi = (\xi_{1},\xi_{2}) \in \Xi$ such as
\begin{equation}
	\label{EQ: ParabFormExample}
	\begin{split}
			\mathcal{F}(u,w,\xi_{1},\xi_{2}) = \tau_{1}\int_{0}^{1}(\mu_{2}u(x)-\xi_{1}(x))(\xi_{1}(x)+\mu_{1}u(x) )dx +\\+ \tau_{2}\int_{0}^{1}(\varkappa_{2}u(x)-\xi_{2}(x))(\xi_{2}(x)+\varkappa_{1}u(x) )dx,
	\end{split}
\end{equation}
where $\tau_{1},\tau_{2} \geq 0$ are parameters. It is clear that it satisfies the properties $\mathcal{F}(v,0) \geq 0$ and $\mathcal{F}(v_{1}-v_{2},F(v_{1})-F(v_{2})) \geq 0$ for any $v,v_{1},v_{2} \in \mathbb{H}$. For the Hermitian extension of $\mathcal{F}$ for $v=(u,w) \in \mathbb{H}^{\mathbb{C}}$ and $\xi=(\xi_{1},\xi_{2}) \in \Xi^{\mathbb{C}}$ we have
\begin{equation}
	\label{EQ: ParabHermitianExtExample}
	\begin{split}
		\mathcal{F}^{\mathbb{C}}(u,w,\xi_{1},\xi_{2}) = \tau_{1}\int_{0}^{1}\operatorname{Re}\left[(\mu_{2}u(x)-\xi_{1}(x))^{*}(\xi_{1}(x)+\mu_{1}u(x) )\right ]dx +\\+ \tau_{2}\int_{0}^{1}\operatorname{Re}\left[(\varkappa_{2}u(x)-\xi_{2}(x))^{*}(\xi_{2}(x)+\varkappa_{1}u(x) ) \right]dx.
	\end{split}
\end{equation}
Note that for the case $\mu_{1}=\mu_{2} = \Lambda_{1}$, $\varkappa_{1}=\varkappa_{2}=\Lambda_{2}$ and $\tau_{1}=\tau_{2}=1$ the corresponding to \eqref{EQ: ParabHermitianExtExample} frequency inequality from \eqref{EQ: FreqConditionFormGeneral} takes the usual form as
\begin{equation}
	\label{EQ: ParabExampleGapParticular}
	|W(p)| < (\Lambda^{2}_{1} + \Lambda^{2}_{2})^{-1} = \Lambda^{-1} \text{ for all } p=-\nu + i\omega, \text{where } \omega \in \mathbb{R}.
\end{equation}
Here $\Lambda$ is the Lipschitz constant of $F$ and $\nu$ is fixed. For the general case the obtained region in the space of parameters $(\mu_{1},\mu_{2},\varkappa_{1},\varkappa_{2})$ will be larger than the one determined by \eqref{EQ: ParabExampleGapParticular}.

Let us write $\mathcal{F}$ from \eqref{EQ: ParabFormExample} as $\mathcal{F}(v,\xi)=\tau_{1} \mathcal{F}_{1}(v,\xi_{1}) + \tau_{2}\mathcal{F}_{2}(v,\xi_{2})$, where $\mathcal{F}_{1}$ and $\mathcal{F}_{2}$ are given by the corresponding integrals. In fact, the class of systems \eqref{EQ: ParabNeumannExample1}, with the above imposed assumptions on $f_{1}$ and $f_{2}$, is determined by the pair of forms $\mathcal{F}_{1}, \mathcal{F}_{2}$, which distinguish two different inputs $\xi_{1}$ and $\xi_{2}$ in the control system, and the class of systems determined by the form $\mathcal{F}$ is larger than this class. So, the frequency condition corresponding to $\mathcal{F}$ may be not optimal in the class determined by the pair $\mathcal{F}_{1}$ and $\mathcal{F}_{2}$, although this condition is better than the conditions that do not take into account this structure of the nonlinearity $F$.
\section*{Acknowledgments}
The author is thankful to the anonymous referee whose advices helped to improve the overall presentation of the work.
\section*{Funding}
The reported study was funded by RFBR according to the research project \textnumero~20-31-90008; by a grant in the subsidies form from the federal budget for the creation and development of international world-class math centers, agreement between MES RF and PDMI RAS No. 075-15-2019-1620; by V.~A.~Rokhlin grant for young mathematicians of St.~Petersburg.

\bibliographystyle{amsplain}

\end{document}